\newtheorem{thm}{Theorem}[section]
\newtheorem{prop}[thm]{Proposition}
\newtheorem{lemma}[thm]{Lemma}
\newtheorem{cor}[thm]{Corollary}
\newtheorem{thm*}{Theorem}
\numberwithin{equation}{section}
\newcommand{\reg}{\mathrm{reg}}
\theoremstyle{definition}
\def\fourvdots{\vbox{\baselineskip1\p@ \lineskiplimit\z@
  \kern6\p@\hbox{.}\hbox{.}\hbox{.}\hbox{.}}}
\newcommand{\an}{\mathrm{an}}
\newcommand{\scrI}{\mathscr{I}}
\newcommand{\scrS}{\mathscr{S}}
\newcommand{\calF}{\mathcal{F}}
\newcommand{\calO}{\mathcal{O}}
\newcommand{\calS}{\mathcal{S}}
\newcommand{\fraka}{\mathfrak{a}}
\newcommand{\frakd}{\mathfrak{d}}
\newcommand{\frakg}{\mathfrak{g}}
\newcommand{\frakk}{\mathfrak{k}}
\newcommand{\frakl}{\mathfrak{l}}
\newcommand{\frakp}{\mathfrak{p}}
\newcommand{\frakH}{\mathfrak{H}}
\newcommand{\Ads}{\mathbb{A}}
\newcommand{\CC}{\mathbb{C}}
\newcommand{\QQ}{\mathbb{Q}}
\newcommand{\RR}{\mathbb{R}}
\newcommand{\WW}{\mathbb{W}}
\newcommand{\ZZ}{\mathbb{Z}}
\renewcommand{\ker}{\operatorname{ker}}
\newcommand{\isomto}{\xrightarrow{\raisebox{-5pt}{$\sim$}}}
\newcommand{\rar}{\rightarrow}
\newcommand{\hra}{\hookrightarrow}	
\newcommand{\thrar}{\twoheadrightarrow}
\newcommand{\wt}{\widetilde}
\newcommand{\wh}{\widehat}
\newcommand{\ol}{\overline}
\newcommand{\End}{\mathrm{End}}
\newcommand{\Hom}{\mathrm{Hom}}
\newcommand{\Lie}{\mathrm{Lie}}
\newcommand{\Gal}{\mathrm{Gal}}
\newcommand{\Tr}{\operatorname{Tr}}
\newcommand{\tr}{\operatorname{tr}}
\newcommand{\Res}{\operatorname{Res}}
\newcommand{\Sp}{\operatorname{Sp}}
\newcommand{\Ut}{\operatorname{U}}
\newcommand{\pr}{\mathrm{pr}}
\newcommand{\Span}{\mathrm{Span}}
\newcommand{\Ind}{\mathrm{Ind}}
\newcommand{\rank}{\mathrm{rank}}
\newcommand{\Herm}{\mathrm{Herm}}
\newcommand{\Nm}{\mathrm{Nm}}
\newcommand{\ab}{\mathrm{ab}}
\newcommand{\Alg}{\mathrm{Alg}}
\renewcommand{\Im}{\operatorname{Im}}
\renewcommand{\ker}{\operatorname{ker}}
\renewcommand{\Re}{\operatorname{Re}}
\newcommand{\codim}{\operatorname{codim}}
\newcommand{\GL}{{\mathrm{GL}}}
\newcommand{\SU}{{\mathrm{SU}}}
\renewcommand{\mod}{\text{mod }} 
\newcommand{\vol}{\mathrm{vol}}
\newcommand{\dash}{\textendash}
\newcommand{\KM}{\mathrm{KM}}
\title{Generating series of intersection volumes of special cycles on unitary Shimura varieties}
\author{Z. Amir-Khosravi}
\newcommand{\Ik}{\operatorname{Ik}}
\begin{document}
\maketitle
\begin{abstract}We form a generating series of regularized volumes of intersections of special cycles on a non-compact unitary Shimura variety with a fixed base change cycle. We show that it is a Hilbert modular form by identifying it with a theta integral, with parameters lying outside the classical convergence range of Weil, which we show converges. By applying the regularized Siegel-Weil formulas of Ichino and Gan-Qiu-Takeda, we show that modular form is the restriction of a hermitian modular form of degree $n$ related to Siegel Eisenstein series on $U(n,n)$. These results follow from a computation showing the Kudla-Millson Schwartz function vanishes under the Ikeda map. 
\end{abstract}

\tableofcontents
\section*{Introduction}

Let $E/F$ be a CM extension of number fields, with $n = [F:\QQ]$, and $(V,Q)$ a 
non-degenerate 
hermitian space over $E$ of dimension $m$. Put
\begin{align}H = \Res_{F/\QQ} \Ut(V,Q)
\end{align}
and assume $H(\RR)$ is non-compact. Denote by $D$ the set of maximal 
negative-definite subspaces of 
$V(\RR)$. Then $D$ is a hermitian symmetric domain on which $H(\RR)$ acts 
transitively.

Let $L\subset V$ be a maximal lattice (in the sense of \cite{Shim64}), and 
let $\Gamma(L)$ denote its stabilizer in $H(\QQ)$. If $\Gamma \subset 
\Gamma(L)$ is a suitable arithmetic subgroup, the quotient 
\begin{align}M = \Gamma\backslash D\end{align}
is a complex algebraic variety.

The variety $M$ hosts a collection of \textit{special cycles} 
$C_\beta$, indexed by $\beta \in \Herm_r(E)$, the set of $E$-hermitian $r\times 
r$ matrices, for $1 \leq r \leq m$. We encode the cycles $C_\beta$ in a formal generating series as follows. 

Recall the complex Siegel half-space of degree $r$
$$\CC\frakH_r = \{ \tau = u + iv: ,u,v\in \Herm_r(\CC),\ \ v > 0\}.$$
For each $T=(\tau_1,\cdots, \tau_n)\in \CC\frakH_r^n$, $\beta=(\beta_1,\cdots, \beta_n)\in 
\Herm_r(E)^n$, write
$$ e_*(\beta T) = \exp(2\pi i \sum_i \Tr \beta_i \tau_i).$$
A remarkable fact is that
$$ \calF(T) = \sum_{\beta\in \Herm_r(E)} C_\beta \cdot e_*(\beta T)$$
behaves like the Fourier expansion of a Siegel automorphic form on $U(r,r)^n$. 
More concretely, let $C\subset M$ 
be an algebraic cycle of dimension $r$. One may consider the intersection 
numbers 
$I(C,C_\beta)$ of $C$ and $C_\beta$, in some suitable sense. Then 
the series
$$ \calF(T;C) = \sum_{\beta\in \Herm_r(E)} I(C,C_\beta) e_*(\beta T)$$ 
should, under ideal conditions, be the Fourier expansion of an actual automorphic form. 

The work of Hirzebruch and Zagier on intersection numbers of curves on Hilbert modular varieties, and that of Kudla  
\cite{KBalls78} on Picard modular varieties, are the first results of this type. The phenomenon was studied in 
great generality by Kudla and Millson in the 1980s \cite{KM86, 
KM88, KM90IHES}, with $\calF(T;C)$ understood as a kind of geometric theta 
lift of the cycle $C$. If $V$ is anisotropic, so that $M$ is compact, their work proves all the expected properties of $\calF(T)$. One may also replace $(V,Q)$ by a quadratic space over $F$, and obtain cycles $C_\beta$ on arithmetic quotients of symmetric space that may not be algebraic. The geometric theta 
lift then produces automorphic forms on $\Sp(2nr)$. 

More recently, Funke and Millson have extended and generalized some of these results to the non-compact case \cite{FunkMill02, 
FunkMill06, FunkMill11, FunkMill13}. Although they mainly focus on the 
orthogonal case, their methods are expected to also apply \textit{mutatis mutandis} to the unitary case.

In this paper, we focus on a particular unitary case where $M$ is a non-compact, and $C$ is a \textit{base change} cycle, obtained from a model $(V_0,Q_0)$ of $(V,Q)$ 
over an imaginary quadratic field $E_0/\QQ$, so that
$$(V,Q)=(V_0,Q_0)\otimes_{E_0} E.$$

We assume that $(V_0,Q_0)$ has signature $(n,1)$, and 
take $r=1$. We put
\begin{align}H_0 = \Ut(V_0,Q_0).
\end{align}
If $D_0$ denotes the hermitian symmetric domain associated to $H_0$, $D$ may be 
identified with $D_0^n$. The injection $H_0 \hra H$ then induces a diagonal 
embedding $D_0\hra D_0^n$.  For a suitable arithmetic subgroup $\Gamma_0 
\subset H_0(\QQ)\cap \Gamma$, we put
\begin{align}M_0 = \Gamma_0 \backslash D_0,
\end{align}
and let $C_0$ denote the cycle given by the induced locally finite map 
\begin{align}\iota_0: M_0 \rar M.
\end{align}

We will have reason to consider different lattices $L\subset V$, so we write 
$C_{\beta,L}$ to emphasize the dependence. For non-zero $\beta\in F = 
\Herm_1(E)$, the cycles $C_{\beta,L}$ and $C_0$ have complementary codimension. 
By $I(M_0,C_{\beta,L})$ we denote the intersection number of $C_{\beta,L}$ and 
$M_0$, in a suitable sense. 

Let $K(L)\subset H_0(\Ads_f)$ denote the stabilizer of $L\otimes \wh{\ZZ} 
\subset V(\Ads_f)$, and fix a set of representatives $h_1,\cdots, h_l$ of
$$ H_0(\QQ)\backslash H_0(\Ads_f)/K_{H_0}.$$
Denote by $L_i \subset V_0^n\simeq V$ the lattices 
\begin{align}L_i = 
h_i (L\otimes \wh{\ZZ})\cap V_0(\QQ)^n,\ \ \ i=1,\cdots, l.
\end{align}

For $\tau=(\tau_1,\cdots, \tau_n)\in \frakH^n$, we write $\tau_i = u_i + i 
v_i$, with $u_i, v_i\in \Herm_n(\CC)$, and put
\begin{align}F(\tau) = c_0 + i^{-n} \prod_{i=1}^n |v_i|^{(n+1)/4} 
\sum_{\beta\in F}\left( \sum_{i=1}^r I(C_0,C_{\beta,L_i})\right)e_*(\beta 
\tau)  
,\end{align}
where $c_0$ is some explicit constant. 

The aim of this paper is to show 
$F(\tau)$ is the Fourier expansion 
of a Hilbert modular form of weight $\frac{n+1}{2}$ on $\frakH^n$, and to 
identify it with the restriction to $U(1,1)^n$ of an automorphic form on 
$U(n,n)$ derived from Siegel Eisenstein series.

We now describe the results in more detail.

Let $(W,J)$ be a split skew-hermitian space of signature $(1,1)$, and put
$$(W_0,J_0)= (\Res_{E/E_0} W, \tr_{E/E_0} J).$$
Define
$$G_0 = U(W_0,J_0),\ \ \ G = \Res_{F/\QQ}U(W,J),$$
so that $G_{0,\RR}= U(n,n)$ and $G_\RR\simeq U(1,1)^n$. One has a natural 
inclusion $G\subset G_0$ that ``forgets'' $F$-linearity.
 
After fixing a non-trivial additive character $\psi$ on $\Ads_\QQ/\QQ$ and a 
suitable Hecke character $\chi$ on $\Ads_{E_0}^\times/E_0$, one obtains a Weil 
representation
$$ \omega : G_0(\Ads)\times H_0(\Ads) \rar 
\End(\calS)$$
where $\calS = \scrS(V_0(\Ads)^n)$ is a space of adelic Schwartz functions.
To each $\phi\in \calS$ one may associate the theta kernel
$$ \Theta(g,h;\phi) = \sum_{x\in V_0(\Ads)^n} \omega(g,h)\varphi(x).$$
Here $\omega(g,h)\varphi(x)$ denotes $\omega(g,h)\varphi$ 
evaluated at $x$. 
There is an associated theta 
integral
\begin{align}\label{TI} I(g,\phi) = \int_{H_0(\QQ)\backslash H_0(\Ads)} 
\Theta(g,h;\phi) 
dh.\end{align}
The Witt index $r$ of $(V_0,Q_0)$ is equal to $1$. Since $V_0$ is not 
anisotropic and $m-r \leq n$, by a theorem of Weil \cite{Weil65Acta} the above 
integral will in general diverge. For a specific $\varphi\in S$ derived from 
a construction of Kudla and Millson, we will 
prove:

\bigskip
\textbf{Theorem A.}{\ \it For $g\in G(\Ads)\subset G_0(\Ads)$, the integral 
$I(g,\varphi)$ is absolutely convergent and defines an automorphic form on 
$G(\Ads)$. Its restriction to $G(\RR) \subset G(\Ads)$ can be identified 
with 
$F(\tau)$.}
\bigskip

To identify $F(\tau)$ with the 
restriction of an automorphic form on $U(n,n)$, 
we consider regularized theta 
integrals of the form
$$ B(g,s;\phi) = \int_{H_0(\QQ)\backslash H_0(\Ads)} \Theta(g,h;\omega(z) \phi) 
E_{H_0}(h,s) dh, \ \ \ \Re(s) \gg 0.$$
Here $E_{H_0}(h,s)$ is a particular auxiliary Eisenstein series on $H_0$, and 
$\omega(z)$ is a regularizing Hecke operator acting on $\calS$. The integral 
converges absolutely for $\Re(s)$ large and has a meromorphic continuation to 
the $s$-plane. It has a pole of 
order at most two at $s=s_0=\frac{1}{2}$, and a Laurent expansion
$$ B(g,s;\phi) = B_{-2}(\phi) (s-s_0)^{-2} + B_{-1}(\phi) 
(s-s_0)^{-1} + B_0(\phi) + B_1(\phi) (s - s_0) + \cdots,$$ 
where each coefficient $B_k(\phi)$ is an automorphic form on $G_0(\Ads)$.

On the other hand, to $\phi\in \calS$ one associates in a standard way a 
\textit{Siegel-Weil section}  $\Phi^{(s)}\in \Ind_{P_0(\Ads)}^{G_0(\Ads)} 
\chi_0 |\det|^s_{\Ads_{E_0}}$, which is a family of functions on $G_0(\Ads)$ 
parametrized by $s\in \CC$. The corresponding Siegel Eisenstein series is 
$$ E(g,s;\phi)= \sum_{\gamma\in P_0(\QQ)\backslash G_0(\QQ)} 
\Phi^{(s)}(\gamma 
g),\ \ \ g\in G_0(\Ads).$$
The series has a meromorphic continuation to the $s$-plane, with at most a 
simple pole at $\rho = \frac{m-r}{2} = 1$. Its Laurent 
expansion at $s=\rho$ is written 
$$ E(g,s;\phi) = A_{-1}(\phi) (s-\rho)^{-1} + A_0(\phi) + A_{1}(\phi) (s-\rho) 
+ \cdots,$$
where each $A_{-1}(\phi)$ is again an automorphic form on $G_0(\Ads)$. 

To state the result, we must mention that to $\phi\in \calS$ one may associate 
another $\phi'\in \calS$ by a certain procedure. It involves lifting it to 
$\wt{\phi}$ in the Weil representation of a larger group, 
applying the derivative of a normalized intertwining operator to 
$\Phi^{(s)}(\wt{\phi})$, and restricting the corresponding Schwartz function 
back to $\calS$.  When  $\phi=\varphi$ as before, we prove:

\bigskip

\textbf{Theorem B.\ \ }{\it As automorphic forms on $G_0(\Ads)$, 
$$B_{-1}(\varphi)=  A_0(\varphi)  - \frac{1}{2}A_{-1}(\varphi').$$
	Furthemore, for $g\in G(\Ads)$ we have 
	$$B_{-1}(\varphi)(g) = I(g,\varphi).$$}

The proof is by first applying the regularized Siegel-Weil formulas of Ichino 
\cite{Ich04MZ} to show that $B_{-2}(\varphi)=0$. This allows us to simplify and 
refine the second-term identity of \cite{GQT} to obtain the precise expression 
for $B_{-1}(\varphi)$.

\section{Notation}

We fix an ordering $\lambda_1,\cdots, \lambda_g$ of the distinct embeddings 
$F\hra \RR$, and use both $z^\sigma$ and $\ol{z}$ to denote complex conjugation 
on $E$ 
and $E_0$.
If $U$ is a $k$-vector space and $A$ is a $k$-algebra, we write $U(A)$ for 
$U\otimes_k A$. The finite adeles are denoted $\Ads_f$. For a lattice $L\subset 
V$, we put $\wh{L} = L\otimes \wh{\ZZ} 
\subset V(\Ads_f)$, where $\wh{\ZZ} = \varprojlim \ZZ/n\ZZ$.

$(V,Q)$ will denote a non-degenerate hermitian space 
of dimension $m$ over $E$. We have
$$ V(\RR) \simeq V^{(1)}\oplus \cdots \oplus V^{(g)},$$
where each $V^{(i)} = V\otimes_{F,\lambda_i} \RR$ is a 
complex hermitian space of dimension $m$. 

We fix a maximal isotropic subspace  $X \subset V$, so that $r=\dim X$ is the 
Witt index of $(V,Q)$. We fix an ordered basis $(x_1,\cdots, x_r)$ for $X$. We 
fix a dual isotropic subspace $X^*\subset V$ spanned by vectors $(x_1^*,\cdots, 
x_r^*)$ 
satisfying $Q(x_i,x_j^*) = \delta_{ij}$. We thus occasionally identify $X^*$ 
with 
$\Hom(X,E)$.
There's an orthogonal decomposition
\begin{align}\label{Van} V = X \oplus V_\an \oplus X^*,\end{align}
where $V_\an$ is an anisotropic subspace of dimension $m_0 = m-2r$.

As in the introduction, we fix a maximal lattice $L\subset V$. By the 
classification of such lattices in \cite{Shim64}, we can assume $(x_1,\cdots, 
x_r)$ and $(x_1^*,\cdots, x_i^*)$ have been chosen in such a way that
$$ L = \fraka_1 x_1 \oplus  \dots \oplus  \fraka_r x_r \oplus L' 
\oplus \fraka_1^* x_1^*  \oplus \cdots \oplus \fraka_r^{*} x_r^*.$$
Here $\fraka_i\subset E$ are fractional ideals, $\fraka_i^* = 
\ol{\fraka_i}^{-1} \delta_E^{-1}$, and $L' \subset V_\an$ is a maximal lattice.

We let $P_{H}\subset H$ denote the stabilizer of $X$, and put $K_H = 
K_{H,\infty}K(L)$, where $K(L)\subset H(\Ads_f)$ is the stabilizer of 
$\wh{L}\subset V(\Ads_f)$, and $K_{H_\infty} = H(\RR)\cap U(m)$. 

For $n>0$, we let $(W_n,J_n)$ denote the split skew-hermitian space of 
dimension $2n$ spanned by a basis $e_1, \cdots, e_n , f_1,\cdots,f_n$ with 
respect to which $J_n$ is given by
\begin{align}\label{Jn}J_n(x,y) = {}^t \ol{y}\left(\begin{array}{cc}& I_n\\-I_n 
& 
\end{array}\right) x,\ \ \ x,y\in E^{2n}.\end{align}
We put  \begin{align}G_n=\Res^F_\QQ U(W_n,J_n),\end{align}
and $G=G_n$ when $n$ is fixed.

Let $Y = \Span\{e_1,\cdots, e_n\}$, $Y^* = \Span\{f_1,\cdots, f_n\}$, so that 
$Y$ and $Y^*$ are dual isotropic subspaces, and $W = Y \oplus Y^*$. 
We will sometimes identify $Y^*$ with $\Hom(Y,E)$.

Let $P$ denote the Siegel 
parabolic stabilizing $Y$, $M$ its Levi factor, and $N$ the unipotent 
radical. We may identify these with
$$M= \GL(Y),\ \ \ N = \Hom_F(Y,Y^*),\ \ \ P=NM.$$
More explicitly, using the fixed basis for $W$ we can write
\begin{flalign*}
 P(\Ads) &= \left\{p(a) = \left(\begin{array}{cc}a & * \\ & {}^t 
\ol{a}^{-1}\end{array}\right):\  a\in \GL_n(\Ads_E)\right\},\\
 M(\Ads) &= \left\{ m(a)=\left(\begin{array}{cc} a &  \\ & {}^t \ol{a}^{-1} 
\end{array}\right): a\in \GL_n(\Ads_E) \right\},\\
N(\Ads) &= 
\left\{n(b)=\left( 
\begin{array}{cc} 1 & b \\ & 1 \end{array}\right): b\in 
\Herm_n(\Ads_E)\right\}.
\end{flalign*}
where $\Herm_n(k)$ denotes $n\times n$ hermitian matrices with entries in $k$, 
for any $E$-algebra $k$. 

Let $K_G  = G(\wh{\ZZ}) \subset G(\Ads)$ be the 
standard maximal compact of 
$G(\Ads)$. 

For $g\in G(\Ads)$, we have a decomposition \begin{align} g = n(b) 
\cdot m(a) \cdot k,\ \ \ a \in \GL_n(\Ads_E), \ \ \ b\in 
\Herm_n(\Ads_E),\  \ \ k\in K_G.
\end{align}
We write $a(g) = a$ under this decomposition.

As in the introduction, we put
$$ W_0 = \Res_{E/E_0} W,\ \ \ J_0 = \tr_{E/E_0} J.$$
We explicitly identify $(W_0,J_0)$ with $(W_{ng},J_{ng})$ over $E_0$ as 
follows.

Fix a $\ZZ$-basis 
$\{r_1,\cdots, r_g\}$ for $\calO_F$, and a trace-dual basis
$\{s_1,\cdots, s_g\}$ for $\frakd_F^{-1}$. We then have
corresponding isomorphisms
\begin{align}\label{bb} \beta: \ZZ^g \rar \frakd_{F}^{-1},\ \ \ (x_1,\cdots, 
x_g) \mapsto 
\sum_{i=1}^g s_i x_i,\ \ \ \ \ \beta': \ZZ^g \rar \calO_F,\ \ \ (x_1,\cdots, 
x_g)\mapsto \sum_{i=1}^g r_i x_i.\end{align}
The $E_0$-linear map 
\begin{align}\label{RW}\beta^{\prime n}_{E_0} \oplus \beta_{E_0}^n: E_0^{ng} 
\oplus 
E_0^{ng} \rar
E^n \oplus E^n
\end{align}
then identifies $(W_{ng},J_{ng})$ over $E_0$ with $(W_0,J_0)$, using the fixed 
basis $\{e_i,f_i\}_{i=1}^n$ for $W$. 
Then For
\begin{align}G_0 = U(W_0,J_0),\end{align}
as in the introduction, $G_0(\RR)$ may be identified with the complex unitary 
gropu $U(ng,ng)$. 

The canonical inclusion $G\subset G_0$ that ``forgets'' $F$-linearity will be 
sometimes denoted
\begin{align} \jmath: G \hra G_0.\end{align}

\section{Cycles on unitary Shimura varieties}

To begin with, let $(V,Q)$ denote an arbitrary non-degenerate hermitian space 
over $E$ of 
dimension $m$. Assume that each $V^{(i)} = V\otimes_{F,\lambda_i} \RR$ has 
signature $(p_i,q_i)$, and that $p_i q_i \neq 0$ for some $i$. The 
hermitian symmetric domain $D$ 
associated to $H(\RR)$ has complex dimension $\sum_{i=1}^n p_i q_i$, and may be 
identified with
$$ D = \{ Z \subset V(\RR):\ \dim Z = \sum_{i=1}^n q_i,\ \ Q|_{Z} < 0\}.$$
For $Z_0\in D$ fixed, let $K_\infty \subset H(\RR)$ denote its stabilizer, so 
that $D\simeq H(\RR)/K_\infty$. 

An element $Z\in D$ consists of a maximal negative-definite subspace of 
$V\otimes \RR \simeq V^{(1)} + \cdots + V^{(g)}$, where the $V^{(i)}$ are 
mutually orthogonal complex hermitian spaces. Let $D^{(i)}$ denote the set of 
maximal negative-definite subspaces in $V^{(i)}$ (consisting of a single point if $q_i=0$). Since each $Z\in D$ can be written uniquely as $Z=Z^{(1)}+\cdots+Z^{(g)}$, with $Z^{(i)}\in D^{(i)}$, we can identify $D$ with $D^{(1)}\times \cdots \times D^{(g)}$.

Let $\Gamma(L) =  K(L)\cap H(\QQ)$, and assume $\Gamma\subset \Gamma(L)$ is an 
arithmetic subgroup 
 such that 
$\Gamma/Z(\Gamma)$ acts freely on $D$. Then as is well-known
\begin{align}  M  = D/\Gamma\end{align}
is a complex algebraic variety that admits a model over a number field.

We will now describe the family of \textit{special cycles} on $M$.

\subsection{Special cycles}

Let $x=(x_1,\cdots, 
x_g)\subset L $ be a collection of lattice vectors spanning a non-degenerate 
subspace $V_x'$ of $V$. Let $V_x = V_{x}'^\perp$ be the orthogonal complement, 
and write $Q_x = Q|_{V_x}$, $Q_x' = Q|_{V_x'}$. 

Put
\begin{align} H_x=\Res_{F/\QQ} \left(\Ut(V_x,Q_x) \times 
\Ut(V_x',Q'_x)\right).\end{align}
Then $H_x$ injects into $H$ canonically, and its symmetric space  may 
then 
be 
identified with
$$ D_x = \{Z\in D: Z = Z\cap V_x(\RR) + Z\cap V_{x}'(\RR)\}.$$

If each $V_X^{(i)}$ has signature $(r_i,s_i)$, then 
\begin{align}
\dim_\CC D_x = \sum_{i=1}^n (r_i s_i +(p_i-r_i)(q_i-s_i)).
\end{align}

Let $\Gamma_x =  H_x(\QQ)\cap \Gamma$, and put $M_x = D_x/\Gamma_x$. The \textbf{basic cycle} $C_x$ associated 
to $x$ is the image of the locally-finite map
$$ \iota_x: M_x  \rar M.$$

Now let $\beta\in \Herm_n(E)$ be a hermitian $g\times  g$ matrix, and put
\begin{align}\scrI_\beta = \{x = (x_1,\cdots, x_g)\in V^g: Q(x,x)=\beta\}.
\end{align}
Here $Q(x,x)$ denotes the matrix $(Q(x_i,x_j))$. The group $\Gamma$ acts on 
$V^g$ diagonally, and preserves $\scrI_\beta$. Let $y_1,\cdots, y_l$ be a 
set of representatives for the (finite number of) 
orbits of the action of $\Gamma$ on $\scrI_\beta$. 

The \textbf{special cycle} 
associated to $\beta\in \Herm_g(E)$ is defined to be
\begin{align}C_\beta = \coprod\limits_{i=1}^l C_{y_i}.
\end{align}

We remark that $C_\beta$ is a disjoint union of locally finite maps to $M$. 

Now suppose $(V,Q) = (V_0,Q_0)\otimes_{E_0} E$, where $(V_0,Q_0)$ is 
hermitian over $E_0$ of signature $(p_0,q_0)$, with $pq\neq 0$. For convenience we assume $V_0\subset V$. Composing the inclusion $V_0(\RR)\hra V(\RR)\simeq \prod_i V^{(i)}$ with each 
projection $\prod_i V^{(i)} \rar V^{(i)}$ we obtain isomorphisms $V_0(\RR) 
\isomto V^{(i)}$ of complex hermitian spaces. Let $H_0=U(V_0,Q_0)$ be as in the introduction, and identify its symmetric space $D_0$ with the space of negative-definite $q_0$-dimensional subspaces of $V_0(\RR)$. Then via the isomorphisms $V_0(\RR) \simeq V^{(i)}$, each $D^{(i)}$ is identified with $D_0$, and the map $D_0 \hra D$ induced by $H_0(\RR)$ with the diagonal $D_0 \hra D_0^n$.

Take 
$\Gamma_0\subset H_0(\QQ)\cap \Gamma$ to be an 
arithmetic subgroup such 
that $\Gamma_0/Z(\Gamma_0)$ acts freely on the hermitian symmetric domain $D_0$ 
of $H_0$. Put
\begin{align}M_0 = D_0/\Gamma_0.
\end{align}
We assume furthermore that $M_0$ has finite volume.

The \textbf{base change cycle} associated to $(V_0,Q_0)$ and $\Gamma_0$ is the image of locally finite map 
\begin{align}\iota_0: M_0 \rar M.\end{align}

\subsection{Intersection volumes}

Since $(V_0,Q_0)$ has signature $(p,q)$, the hermitian symmetric domain $D_0$ has complex dimension $pq$, and $D$ has dimension $npq$. 

For $x=(x_1,\cdots, x_r) \in V^r$, let  $(r_i,s_i)$ denote the signature of 
$V_{x}^{(i)}$. Then $D_x$ has 
dimension 
$$ \sum_{i=1}^n (r_i s_i + (p-r_i)(q-s_i)) = npq -\sum_{i=1}^n (ps_i + q r_i - 
2r_i s_i).$$
Then we have
$$ \codim C_0 = (g-1)pq,\ \ \ \codim C_x = \sum_{i=1}^n (p s_i + q r_i - 2 r_i 
s_i)$$
and $C_0$, $C_x$ have complementary codimension if and only if
\begin{align}\label{codimeq} \sum_{i=1}^n s_i(p-r_i) + r_i(q-s_i)=pq.\end{align}

From now on we assume $(p,q)=(n,1)$. 

\begin{lemma}Then $C_0$ and $C_x$ have complementary 
codimension if and only if either $(r_i,s_i)=(1,0)$ for all $i$, or 
$(r_i,s_i)=(p-1,1)$ for all $i$.
\end{lemma}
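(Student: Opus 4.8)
The plan is to reduce the statement to the single numerical identity \eqref{codimeq} and then solve that identity under the constraints that geometry imposes on the tuple $(r_i,s_i)_i$. Recall from the discussion just before the lemma that, with $(p,q)=(n,1)$, the cycles $C_0$ and $C_x$ have complementary codimension exactly when $\sum_{i=1}^n(s_i(n-r_i)+r_i(1-s_i))=n$. Since $(V_0,Q_0)$ has signature $(n,1)$ we have $\dim_E V = n+1$; writing $V_x'=\Span\{x_1,\dots,x_r\}$ and $d=\dim_E V_x'$, the non-degenerate complement $V_x=(V_x')^\perp$ satisfies $\dim_E V_x = n+1-d$, so $r_i+s_i=n+1-d$ for every $i$, while $V_x^{(i)}\subseteq V^{(i)}$ forces $s_i\le 1$ (the negative index of a subspace of a space of signature $(n,1)$ is at most $1$). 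Consequently, for each $i$ there are only two possibilities: either $s_i=1$, in which case $(r_i,s_i)=(n-d,1)$ and the $i$-th summand of \eqref{codimeq} equals $d$; or $s_i=0$, in which case $(r_i,s_i)=(n+1-d,0)$ and the $i$-th summand equals $n+1-d$.

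It then remains to observe that \eqref{codimeq} asserts that a sum of $n$ terms, each equal to $d$ or to $n+1-d$, equals $n$, and to see which values of $d$ and which distributions of the terms make this possible. If $2\le d\le n-1$ then both $d\ge 2$ and $n+1-d\ge 2$, so the sum is $\ge 2n>n$, which is impossible. If $d=n+1$ then $V_x'=V$, every $s_i=0$, and every term is $0$, so the sum is $0\ne n$. If $d=1$ the terms lie in $\{1,n\}$ and their sum is $n$ only if all of them equal $1$ (otherwise at least one term is $n$ and the sum is $\ge n+(n-1)>n$ for $n\ge 2$), so $s_i=1$ and $(r_i,s_i)=(n-1,1)=(p-1,1)$ for every $i$. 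If $d=n$ the terms again lie in $\{1,n\}$ (now $n+1-d=1$) and the same argument forces every term to equal $1=n+1-d$, so $s_i=0$ and $(r_i,s_i)=(1,0)$ for every $i$. The converse is immediate: the uniform configuration $(r_i,s_i)\equiv(p-1,1)$ has $d=1$ and gives sum $n\cdot 1=n=pq$, and $(r_i,s_i)\equiv(1,0)$ has $d=n$ and again gives sum $n\cdot 1=n=pq$.

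The estimates above use $n\ge 2$; the remaining case $n=1$ (one archimedean place, $\dim_E V=2$, so necessarily $d=1$) I would simply check by hand, where the two configurations $(r_1,s_1)=(1,0)$ and $(r_1,s_1)=(0,1)=(p-1,1)$ are exactly the ones satisfying \eqref{codimeq}. The point that genuinely needs attention is the treatment of \emph{mixed} tuples, in which $s_i=0$ for some indices and $s_i=1$ for others: there is no geometric reason for the signature of $V_x^{(i)}$ to be independent of $i$ --- already when $d=1$ it is dictated by the signs of the totally real numbers $\lambda_i(Q(x_1,x_1))$, which need not all agree --- so the uniformity of $(r_i,s_i)$ must be extracted from \eqref{codimeq} itself. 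This is precisely what the counting in the boundary cases $d=1$ and $d=n$ supplies, by forcing every summand to take the smaller of its two possible values; that small step is the crux of the argument.
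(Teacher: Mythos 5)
Your proof is correct and follows essentially the same route as the paper: both reduce the claim to the identity \eqref{codimeq} and then argue that each of the $n$ summands must equal $1$, using $s_i\le q=1$ and the fact that $r_i+s_i=\dim V_x$ is independent of $i$ to pin down the two uniform configurations. If anything, your parametrization by $d=\dim V_x'$ is slightly more careful than the paper's one-line claim that non-negativity of the summands forces each to be exactly $1$ (one really needs each summand to be at least $1$ in the non-degenerate range $1\le d\le n$, which your explicit values $d$ and $n+1-d$ make transparent), and your separate treatment of mixed tuples and of $n=1$ fills in details the paper leaves implicit.
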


\begin{proof}
Each term of the sum in (\ref{codimeq}) is non-negative, there are $n$ terms, 
and the RHS is $n$. Hence the codimensions are complementary if and only if 
every summand is exactly 1. Now $0\leq s_i \leq q = 1$. If 
$s_i=0$, we must have $r_i=1$, and if $s_i=1$, $r_i = p-1$. Since $r_i+ s_i = 
\dim V_x$ for all $i$ and 
$n> 1$, the two cases are 
mutually 
exclusive.
\end{proof}

In fact the two cases in the lemma describe the same basic cycle 
$C_x$ coming from a decompositions $V=V_x + V_x'$, with $V_x$ and $V_x'$ switched. This is the reason for restriction to cycles with $r=1$.

We therefore consider the intersection of $C_0$ with $C_b$, for $b\in F = \Herm_1(E)$. By the lemma, the cycles have complementary codimension if and only if $b$ is totally positive. However, even in this case the intersection will not in general be transversal. Nevertheless it is a union of basic cycles on $M_0$, of varying codimensions, which all have finite volume since $M_0$ does, by assumption. 

Let $\xi\in V$. For $\lambda: F \hra \RR$, let $\lambda(\xi)$ denote the image of $\xi$ in $V_\lambda = V \otimes_{F,\lambda} \RR$, and $\xi^{\lambda}$ the pre-image of $\lambda(\xi)$ under $V_0(\RR)\simeq V_\lambda$. Write $V(\xi)$ for the subspace of $V_0(\RR)$ spanned by $\xi^{\lambda}$ for all $\lambda$, and $D_0(\xi)$ for $D_0\cap D_\xi$.

\begin{prop}If $V(\xi)$ is a positive-definite subspace of $V_0(\RR)$, then $D_0(\xi)$ is isomorphic to a complex ball of dimension $n-\dim V(\xi)$. Otherwise it contains at most one point.
\end{prop}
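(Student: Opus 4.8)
The plan is to analyze the subspace $D_0(\xi) = D_0 \cap D_\xi$ by unwinding the definitions of both factors. Recall that $D_\xi$ consists of those $Z \in D$ that decompose as $Z = (Z \cap V_\xi(\RR)) \oplus (Z \cap V'_\xi(\RR))$, where $V'_\xi$ is spanned by the components $\lambda(\xi)$ and $V_\xi = (V'_\xi)^\perp$; equivalently, $Z \in D_\xi$ iff $Z \perp \lambda(\xi)$ or $\lambda(\xi) \in Z$ for each embedding $\lambda$. Since we are on the base-change locus, a point of $D_0$ sitting diagonally in $D = D_0^n$ corresponds to a single negative-definite line $\ell \subset V_0(\RR)$, and the condition to also lie in $D_\xi$ becomes: for each $\lambda$, the line $\ell$ (viewed in $V_\lambda \simeq V_0(\RR)$) is either orthogonal to $\xi^\lambda$ or contains $\xi^\lambda$. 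Because $q_0 = 1$, the subspace $Z$ is one-dimensional, so $\xi^\lambda \in Z$ would force $\xi^\lambda$ to span a negative-definite line.

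First I would dispose of the case where $V(\xi)$ is \emph{not} positive-definite. If some $\xi^\lambda$ has $Q_0(\xi^\lambda,\xi^\lambda) \le 0$, I claim there is at most one admissible line. If $Q_0(\xi^\lambda,\xi^\lambda) < 0$, then the only way to satisfy the local condition at $\lambda$ with a negative-definite line orthogonal to $\xi^\lambda$ is impossible in signature $(n,1)$ — the orthogonal complement of a negative line is positive-definite — so we must have $\ell = \RR\xi^\lambda$, pinning down the point uniquely (when it exists). If $Q_0(\xi^\lambda,\xi^\lambda) = 0$, then $\xi^\lambda$ is isotropic, hence cannot span a negative-definite line, so the only option is $\ell \perp \xi^\lambda$; but then $\ell$ is a negative line in the degenerate hyperplane $(\xi^\lambda)^\perp$, and I would check this still cuts out at most a point (the radical considerations in signature $(n,1)$ force it). Either way, once $V(\xi)$ fails to be positive-definite, $D_0(\xi)$ has at most one point.

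Next, assume $V(\xi)$ is positive-definite. Then no $\xi^\lambda$ can lie on a negative-definite line, so the local condition at every $\lambda$ reduces to $\ell \perp \xi^\lambda$. Hence $D_0(\xi) = \{\ell \subset V(\xi)^\perp : \ell \text{ negative-definite}\}$, which is exactly the hermitian symmetric domain $D_0$ for the restricted space $(V(\xi)^\perp, Q_0|_{V(\xi)^\perp})$. Since $V(\xi)$ is positive-definite of dimension $\dim V(\xi)$, its orthogonal complement in $V_0(\RR)$ has signature $(n - \dim V(\xi), 1)$, whose associated domain is the complex ball $\mathbb{B}^{\,n - \dim V(\xi)}$. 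This gives the asserted isomorphism.

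The main obstacle I expect is the isotropic sub-case $Q_0(\xi^\lambda, \xi^\lambda) = 0$ when proving the ``otherwise'' clause: here $\ell \perp \xi^\lambda$ is the only option but $(\xi^\lambda)^\perp$ is a degenerate hyperplane, so I cannot simply invoke that its signature forces a point. I would handle this by noting that any negative-definite $\ell$ in a degenerate hyperplane of a signature $(n,1)$ space must avoid the radical, and that combining the conditions across all embeddings $\lambda$ (together with the rationality constraint coming from $\xi \in V$ rather than $V(\RR)$) leaves at most one point; a careful bookkeeping of which $\xi^\lambda$ are isotropic versus negative will be needed, but it is elementary linear algebra over $\RR$ in each factor. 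The genuinely substantive content is the positive-definite case, where the identification with a lower-dimensional ball is immediate once the local conditions have been correctly reduced.
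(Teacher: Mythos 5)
Your overall strategy is the paper's: reduce membership in $D_0(\xi)$ to the per-embedding condition that the negative line $z_0$ be either orthogonal or proportional to $\xi^\lambda$, then case-split on signs. The positive-definite case is handled exactly as in the paper and is correct. But your proof of the ``otherwise'' clause has a genuine gap in the case analysis: you open the case ``$V(\xi)$ not positive-definite'' and then only treat the sub-cases where \emph{some individual} $\xi^\lambda$ satisfies $Q_0(\xi^\lambda,\xi^\lambda)\le 0$. These sub-cases do not exhaust the hypothesis: a subspace spanned by vectors of strictly positive norm can perfectly well be indefinite (already in signature $(1,1)$, the vectors $(2,1)$ and $(2,-1)$ for the form $x^2-y^2$ both have norm $3$ yet span the whole space), and this is not a vacuous possibility here --- it is exactly the situation of a totally positive $b=Q(\xi,\xi)$ whose associated Gram matrix $\beta$ fails to be positive semi-definite, which is the case the proposition is really aimed at. The paper closes this sub-case with the one observation your write-up is missing: when every $\xi^\lambda$ has positive norm, any $z_0\in D_0(\xi)$ must be orthogonal to all of them, so $V(\xi)\subset z_0^\perp$, and $z_0^\perp$ is positive-definite because $V_0(\RR)$ has signature $(n,1)$; hence if $V(\xi)$ is not positive-definite, $D_0(\xi)$ is empty. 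You need to add this step.

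A smaller point: your treatment of the isotropic sub-case $Q_0(\xi^\lambda,\xi^\lambda)=0$ is both more complicated than necessary and, as sketched, not actually a proof (``bookkeeping across embeddings plus the rationality constraint'' is not an argument). The clean statement is that for a nonzero isotropic $u$ in a space of signature $(n,1)$, the restriction of the form to $u^\perp$ is positive semi-definite with radical $\CC u$ (split off a hyperbolic plane containing $u$ to see this), so $u^\perp$ contains no negative-definite line at all and $D_0(\xi)$ is empty in this sub-case --- no interaction between the different embeddings is needed. With that replacement and the missing sub-case above supplied, your argument matches the paper's.
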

\begin{proof}Suppose $z_0\in D_0 \cap D(\xi)$. Then for each $\lambda$, the image of $z_0$ in $V_\lambda$ must be either orthogonal or proportional to $\xi^\lambda$. If $\lambda(Q(\xi,\xi))<0$, $z_0$ must be the line spanned by $\xi^{\lambda}$, hence $D_0(\xi)$ is either empty or only contains only $z_0$. If $\lambda(Q(\xi,\xi))>0$ for all $\lambda$, $z_0$ must be orthogonal to all $\xi^{\lambda}$, i.e. $V(\xi)\subset z_0^\perp$, which implies $V(\xi)$ is positive-definite since $V_0(\RR)$ has signature $(n,1)$. If $V(\xi)$ is positive-definite, then $D_0(\xi)$ is the set of negative-definite lines in $V(\xi)^\perp$, which has signature $(n-\dim V(\xi),1)$.
\end{proof}

Suppose that $x=(x_1,\cdots, x_n)\in V_0^n$ corresponds to $\xi\in V$ under our fixed isomorphism. In other words, $\xi = \sum_{j=1}^n r_j x_j$, where $(r_j)_j$ is the fixed $\ZZ$-basis for $\frakd_F^{-1}$. 
Let $V_{0,x}$ denote the subspace of $V_0$ spanned by $x_i$. Then $V_{0,x}(\RR) \supset V(\xi)$, since each $\xi^{(i)}=\sum_{j=1}^n \lambda_i(r_j) x_j\in V_{0,x}(\RR)$. On the other hand, the matrix $U=(u_{ij})\in \Herm_n(\CC)$ with $u_{ij}=\lambda_i(r_j)$ is invertible, so that $V_{0,x}(\RR)=V(\xi)$. 

If $Q(\xi,\xi)$ is positive-definite, the subspace $D_{0,x}\subset D_0$ associated to $x\in V_0^n$ coincides with $D(\xi)$ defined above. 

Let $K(\xi)$ denote the fiber product of the morphisms $\iota_{\xi}:D_{x}/\Gamma_{\xi} \rar D/\Gamma$ and $\iota_0: D_0/\Gamma_0 \rar D/\Gamma$ over $D/\Gamma$. In other words
$$ K(\xi) = \{(\Gamma_0 z, \Gamma_\xi w): z\in D_0,\ w\in D_\xi,\ \Gamma z = \Gamma w\}.$$

\begin{prop}\label{decbasicint} The map
	$$ \coprod_{[\gamma]\in \Gamma_0\backslash \Gamma/\Gamma_\xi} D(\gamma \xi)/(\Gamma_0 \cap \Gamma_{\gamma\xi}) \rar K(\xi),$$
	induced by $(z,\gamma)\mapsto (\Gamma_0 z, \Gamma_\xi \gamma^{-1} z)$ is a bijection.
\end{prop}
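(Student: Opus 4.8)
The plan is to exhibit mutually inverse maps, or more precisely to verify that the displayed map is well-defined, injective, and surjective directly from the definitions, since $K(\xi)$ is a fiber product and the source is a disjoint union over a double-coset space. First I would recall that $D_\xi = D_{\gamma\xi}$-translates are governed by $\Gamma$ acting on $V^n$, and that $D(\eta) = D_0 \cap D_\eta$ for any $\eta$ in the $\Gamma$-orbit of $\xi$; note also that $\Gamma_{\gamma\xi} = \gamma \Gamma_\xi \gamma^{-1}$, so $\Gamma_0 \cap \Gamma_{\gamma\xi}$ is precisely the stabilizer in $\Gamma_0$ of the point $\gamma\xi \in V^n$.

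Next I would check well-definedness of $(z,\gamma)\mapsto (\Gamma_0 z, \Gamma_\xi \gamma^{-1}z)$. For this to land in $K(\xi)$ we need $z \in D_0$ (clear, since $z\in D(\gamma\xi)\subset D_0$), $\gamma^{-1}z \in D_\xi$ (equivalent to $z\in D_{\gamma\xi}$, which holds since $z\in D(\gamma\xi)\subset D_{\gamma\xi}$), and $\Gamma z = \Gamma \gamma^{-1} z$ in $D/\Gamma$ (obvious, as $\gamma\in\Gamma$). Independence of the choice of representative: if we replace $z$ by $\delta z$ with $\delta \in \Gamma_0 \cap \Gamma_{\gamma\xi}$, then $\Gamma_0 \delta z = \Gamma_0 z$, and $\Gamma_\xi \gamma^{-1}\delta z = \Gamma_\xi (\gamma^{-1}\delta\gamma)\gamma^{-1}z = \Gamma_\xi \gamma^{-1}z$ since $\gamma^{-1}\delta\gamma \in \Gamma_\xi$; and if we replace $\gamma$ by $\gamma_0 \gamma \gamma_\xi$ with $\gamma_0\in\Gamma_0$, $\gamma_\xi\in\Gamma_\xi$, one checks the image is unchanged after correspondingly translating the $D(\gamma\xi)$-component by $\gamma_0$ — this is the compatibility of the disjoint-union indexing with the quotient by $\Gamma_0\cap\Gamma_{\gamma\xi}$, and it is the one slightly fiddly bookkeeping point.

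For surjectivity, given $(\Gamma_0 z, \Gamma_\xi w)\in K(\xi)$ with $z\in D_0$, $w\in D_\xi$, $\Gamma z = \Gamma w$, pick $\gamma\in\Gamma$ with $w = \gamma^{-1}z$; then $z = \gamma w \in \gamma D_\xi = D_{\gamma\xi}$, so $z\in D_0\cap D_{\gamma\xi} = D(\gamma\xi)$, and the class of $(z,\gamma)$ maps to $(\Gamma_0 z, \Gamma_\xi w)$. For injectivity, suppose $(z,\gamma)$ and $(z',\gamma')$ have the same image: then $\Gamma_0 z = \Gamma_0 z'$, so $z' = \gamma_0 z$ for some $\gamma_0\in\Gamma_0$, and $\Gamma_\xi\gamma^{-1}z = \Gamma_\xi\gamma'^{-1}z'$, so $\gamma'^{-1}\gamma_0\gamma \in \Gamma_\xi$, i.e. $\gamma' \in \gamma_0\gamma\Gamma_\xi$, whence $[\gamma'] = [\gamma]$ in $\Gamma_0\backslash\Gamma/\Gamma_\xi$; working within that single summand $D(\gamma\xi)/(\Gamma_0\cap\Gamma_{\gamma\xi})$ one then sees $z$ and $z'$ differ by an element of $\Gamma_0$ that also fixes the second coordinate class, forcing them equal in the quotient.

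The main obstacle is purely the bookkeeping in the double-coset/quotient compatibility: ensuring that changing the representative $\gamma$ of a class in $\Gamma_0\backslash\Gamma/\Gamma_\xi$ induces an identification of $D(\gamma\xi)/(\Gamma_0\cap\Gamma_{\gamma\xi})$ with $D(\gamma'\xi)/(\Gamma_0\cap\Gamma_{\gamma'\xi})$ compatible with the maps to $K(\xi)$, and conversely that the fibers of $(z,\gamma)\mapsto$ image are exactly the orbits of $\Gamma_0\cap\Gamma_{\gamma\xi}$ on the source. None of this is deep — it is the standard ``orbit-counting for a fiber product of quotient stacks'' argument — but it is the step where one must be careful about left vs.\ right translations and about the fact that $\Gamma_{\gamma\xi} = \gamma\Gamma_\xi\gamma^{-1}$.
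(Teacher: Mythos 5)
Your proposal follows the same route as the paper: direct verification that the map is well defined, surjectivity by writing $z=\gamma w$ and observing $z\in D_0\cap D_{\gamma\xi}=D(\gamma\xi)$, and injectivity by sorting the fibers into double cosets and then into orbits of $\Gamma_0\cap\Gamma_{\gamma\xi}$. The outline is correct, but the two key injectivity deductions are asserted where they actually need an argument, and the argument in both cases is the standing hypothesis that $\Gamma/Z(\Gamma)$ acts freely on $D$. Concretely: from $\Gamma_\xi\gamma^{-1}z=\Gamma_\xi\gamma'^{-1}z'$ and $z'=\gamma_0 z$ you only get $(\gamma_0^{-1}\gamma'\gamma_\xi\gamma^{-1})z=z$ for some $\gamma_\xi\in\Gamma_\xi$, i.e.\ that a certain element of $\Gamma$ \emph{stabilizes the point} $z\in D$; it is freeness of the action that upgrades this to an identity in $\Gamma$ and hence to $\gamma'\in\Gamma_0\gamma\Gamma_\xi$. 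The same issue recurs in your final step ``one then sees $z$ and $z'$ differ by an element of $\Gamma_0$ that also fixes the second coordinate class'': what one actually needs is that the element $\gamma_0\in\Gamma_0$ with $z'=\gamma_0 z$ lies in $\Gamma_{\gamma\xi}=\gamma\Gamma_\xi\gamma^{-1}$, and again this follows from $z'=(\gamma\gamma_\xi\gamma^{-1}\gamma_0^{-1})^{-1}z'$ only because point stabilizers are trivial. The paper invokes ``$\Gamma$ acts without fixed points on $D$'' explicitly at exactly these two places; if $\Gamma$ had elements with fixed points on $D$ the map could fail to be injective, so this hypothesis should be named rather than absorbed into the bookkeeping.
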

\begin{proof}Let $(\Gamma_0 z, \Gamma_\xi w)\in K(\xi)$, so that $z = \gamma w$ for some $\gamma\in \Gamma$. Then $z\in D_0 \cap \gamma D_{\xi} = D_0 \cap D_{\gamma \xi} = D(\gamma\xi)$. Conversely, given any $z\in  D(\gamma \xi)$, we have $(\Gamma_0 z, \Gamma_\xi \gamma^{-1} z)\in K(\xi)$. This shows that the map
	$$ \coprod_{\gamma\in \Gamma} D(\gamma \xi) \rar K(\xi),\ \ \ (z,\gamma) \mapsto (\Gamma_0 z , \Gamma_\xi \gamma^{-1} z)$$
is surjective. Now suppose $(z_1,\gamma_1)$ and $(z_2,\gamma_2)$ map to the same point in $K(\xi)$, so that 
$$ (\Gamma_0 z_1, \Gamma_\xi \gamma_1^{-1} z_1)= (\Gamma_0 z_2, \Gamma_\xi \gamma_{2}^{-1} z_2).$$
Then for some $\gamma_0\in \Gamma_0$, $\gamma_\xi \in \Gamma_\xi$, we have $ z_2 = \gamma_0 z_1$ and $\gamma_1^{-1} z_1 = \gamma_\xi \gamma_2^{-1} z_2$, hence $\gamma_1 \gamma_\xi \gamma_2^{-1} \gamma_0 z_1 = z_1$. Then since $\Gamma$ acts without fixed points on $D$, we have $\gamma_1 \gamma_\xi \gamma_2^{-1} \gamma_0 = 1$, so that $\gamma_1 = \gamma_0^{-1} \gamma_2 \gamma_\xi^{-1}$ and $[\gamma_1]=[\gamma_2]$ as double cosets in $\Gamma_0\backslash \Gamma / \Gamma_\xi$. 

Conversely, if $\gamma_1 = \gamma_0 \gamma_2 \gamma_\xi$, then $D(\gamma_1 \xi) = D_0 \cap D_{\gamma_0 \gamma_2 \xi} = D_0 \cap \gamma_0 D_{\gamma_2 \xi} = \gamma_0 D(\gamma_2\xi)$, so that $D(\gamma_1 \xi)$ and $D(\gamma_2 \xi)$ have the same image in $D_0/\Gamma_0$. This shows the images of $D(\gamma \xi)$ in $K(\xi)$ are disjoint, as $\gamma$ ranges over double coset representatives of $\Gamma_0 \backslash \Gamma/\Gamma_\xi$. 

Now suppose $(z_1,\gamma)$ and $(z_2,\gamma)$ map to the same point in $K(\xi)$. Then $\gamma_0 z_1 = z_2$ for some $\gamma_0\in \Gamma_0$, and $\gamma_\xi \gamma^{-1} z_1 = \gamma^{-1} z_2$, so that $z_2 = \gamma \gamma_x \gamma^{-1} \gamma_0^{-1} z_2$, hence $\gamma_0 =\gamma \gamma_x \gamma^{-1} \in \gamma \Gamma_x \gamma^{-1} = \Gamma_{\gamma \xi}$. Then $\gamma_0\in \Gamma_0\cap \Gamma_{\gamma \xi}$, and conversely for any such $\gamma_0$, the points $(\gamma_0 z_1, \gamma)$ and $(z_1,\gamma)$ map to the same in $K(\xi)$. This shows that the surjective map 
	$$ \coprod_{[\gamma]\in \Gamma_0\backslash \Gamma/\Gamma_\xi} D(\gamma \xi)\rar K(\xi)$$
	induces the bijection in the statement.
\end{proof}

Let $\calO_i = \Gamma \xi_i$, $i=1,\cdots, t$ be the distinct orbits of $\Gamma$ acting on
$$I_b(L)=\{\xi\in V: Q(\xi,\xi)=b\},$$
and let $\iota_b: \coprod_{i} D_\xi/\Gamma_\xi \rar D/\Gamma$ denote the disjoint union of the locally finite maps $\iota_{\xi_i}: D_{\xi_i}/\Gamma_{\xi_i} \rar D/\Gamma$. The special cycle $C_b$ is then the image of $\iota_b$. By the proposition, the fiber product of $\iota_b$ with $\iota_0: D_0/\Gamma_0 \rar D/\Gamma$ is given by
$$ \coprod_{i=1}^t \coprod_{[\gamma]\in \Gamma_0 \backslash \Gamma / \Gamma_{\xi_i}} D(\gamma\xi)/(\Gamma_0\cap \Gamma_{\xi}) \rar D/\Gamma,\ \ \ (z,\gamma)_{t} \mapsto (\Gamma_0 z, \Gamma_{\xi_t} \gamma^{-1} z).$$

Denote by $L_0\subset V_0^n$ the lattice corresponding to $L\subset V$ under the isomorphism $V_0^n \simeq V$, $(x_1,\cdots, x_n)\mapsto \sum_{i=1}^n r_i x_i.$
Let $r=(r_1,\cdots, r_n)$ considered as a column vector, and $\beta\in \Herm_n(E_0)$ the matrix $Q_0(x_i,x_j)$. Then
$$ Q(\xi,\xi) = {}^t r \beta r.$$
Now let
$$I_{\beta}(L_0)=\{x\in L_0: Q_0(x_i,x_j)=\beta.\}$$

Then map $x \mapsto \xi$ then establishes a bijection  
$$ \bigcup\limits_{\tiny \begin{array}{c} \beta \in \Herm_n(E_0)\\{}^t r\beta r = b\end{array}}I_{0,\beta}(L_0) \rar I_b(L).$$
For each $\xi\in I_b$, the orbit $\Gamma \xi\subset I_b$ consists of $\Gamma/\Gamma_\xi$ elements. On the other hand, the action of $\Gamma_0$ on $I_{0,\beta}$ splits each such $\Gamma$-orbit into $\Gamma_0$-orbits indexed by
$$ \Gamma_0\backslash \Gamma / \Gamma_\xi.$$

For each $x\in L_0$, let $\Gamma_{x}=\{\gamma\in \Gamma: \gamma x = x\}.$ Then we have a map
$$ \iota_{0,x}: D_{0,x}/(\Gamma_{x}\cap \Gamma_0) \rar D_0/\Gamma_0,$$
whose image is a basic cycle on $D_0/\Gamma_0$, that we denote by $C_{0,x}$. In fact $\Gamma_x = \Gamma_{\xi}\cap \Gamma_0$ and $D_{0,x} = D(\xi)$, so that these are the same cycles occurring in Prop. \ref{decbasicint}.

\begin{thm}\label{C0Cb} For each $b\in F$,
	$$C_0 \cap C_b = \coprod_{\tiny \begin{array}{c}\beta\in \Herm_n(E_0)\\ {}^t r \beta r = b\end{array}} C_\beta.$$ 
\end{thm}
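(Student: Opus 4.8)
The plan is to assemble Theorem \ref{C0Cb} directly from the set-theoretic bijections already established in Proposition \ref{decbasicint} and the discussion following it, the only new ingredient being to match the combinatorial indexing on both sides.

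First I would make precise what $C_0 \cap C_b$ means: it is (the image in $M_0$, or equivalently $M$, of) the fiber product $K$ of $\iota_0: D_0/\Gamma_0 \to D/\Gamma$ with $\iota_b: \coprod_i D_{\xi_i}/\Gamma_{\xi_i} \to D/\Gamma$ over $D/\Gamma$, as spelled out just before the theorem statement. By Proposition \ref{decbasicint} applied to each orbit representative $\xi_i$ and then taking the disjoint union over $i = 1, \dots, t$, this fiber product is
$$ \coprod_{i=1}^t \ \coprod_{[\gamma]\in \Gamma_0\backslash\Gamma/\Gamma_{\xi_i}} D(\gamma\xi_i)/(\Gamma_0 \cap \Gamma_{\gamma\xi_i}). $$
Each piece $D(\gamma\xi_i)/(\Gamma_0\cap\Gamma_{\gamma\xi_i})$, mapped into $D_0/\Gamma_0$, is exactly a basic cycle $C_{0,x}$ on $M_0$ for a suitable $x\in V_0^n$: indeed, as noted after the proposition, $D_{0,x} = D(\xi)$ and $\Gamma_x = \Gamma_\xi \cap \Gamma_0$ when $x$ corresponds to $\xi$. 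So the whole intersection is a disjoint union of basic cycles on $M_0$, indexed by pairs $(i, [\gamma])$.

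Next I would reorganize this index set. The map $x \mapsto \xi = \sum_j r_j x_j$ gives the bijection
$$ \bigsqcup_{\substack{\beta\in\Herm_n(E_0)\\ {}^t r\beta r = b}} I_{0,\beta}(L_0) \ \longrightarrow\ I_b(L), $$
and under it the $\Gamma$-orbits on $I_b(L)$ correspond to $\Gamma$-orbits on $\bigsqcup_\beta I_{0,\beta}(L_0)$ — but since $\Gamma$ preserves each $\beta$-stratum (the hermitian form $Q_0(x_i,x_j)$ is $\Gamma$-invariant), a $\Gamma$-orbit lies in a single $I_{0,\beta}$. Within that $\Gamma$-orbit, the $\Gamma_0$-orbits are indexed precisely by $\Gamma_0\backslash\Gamma/\Gamma_\xi$. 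Therefore the set of pairs $(i,[\gamma])$ with $[\gamma]\in\Gamma_0\backslash\Gamma/\Gamma_{\xi_i}$ is in canonical bijection with the set of $\Gamma_0$-orbits on $\bigsqcup_{{}^t r\beta r = b} I_{0,\beta}(L_0)$, i.e. with $\bigsqcup_{{}^t r\beta r = b} \big(\Gamma_0 \backslash I_{0,\beta}(L_0)\big)$. Under this bijection the basic cycle $C_{0,x}$ attached to the pair $(i,[\gamma])$ is the basic cycle on $M_0$ attached to the corresponding $\Gamma_0$-orbit in $I_{0,\beta}$. Summing the basic cycles over each $I_{0,\beta}$ gives, by the definition of the special cycle on $M_0$ (applied to the hermitian space $V_0$, with $g$ there specialized to $n$), exactly $C_\beta$. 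Disjointness across distinct $\beta$ is immediate since the strata $I_{0,\beta}$ are disjoint, and disjointness within a fixed $\beta$ is part of Proposition \ref{decbasicint}. This yields $C_0\cap C_b = \coprod_{{}^t r\beta r = b} C_\beta$.

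**The main obstacle** is bookkeeping rather than any genuine difficulty: one must check carefully that "basic cycle on $M_0$ attached to a $\Gamma_0$-orbit of $x$" is well-defined independently of the chosen representative $x$ in the orbit (which follows from $D_{0,\gamma_0 x} = \gamma_0 D_{0,x}$ and $\Gamma_{\gamma_0 x}\cap\Gamma_0 = \gamma_0(\Gamma_x\cap\Gamma_0)\gamma_0^{-1}$, exactly the computation done at the end of the proof of Proposition \ref{decbasicint}), and that the identification $D_{0,x} = D(\xi)$, $\Gamma_x\cap\Gamma_0 = \Gamma_\xi\cap\Gamma_0$ is compatible with the maps into $D_0/\Gamma_0$ on both sides. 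One should also note that finiteness of $t$ and of each $\Gamma_0\backslash\Gamma/\Gamma_{\xi_i}$ (hence of the whole disjoint union) follows from the finiteness statements already invoked in the construction of the special cycles, so the right-hand side is genuinely a finite disjoint union of cycles of finite volume.
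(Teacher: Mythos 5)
Your argument is correct and is essentially the paper's own proof: decompose the fiber product via Proposition \ref{decbasicint}, identify each piece with a basic cycle on $M_0$, and regroup the index set $\{(i,[\gamma])\}$ as $\Gamma_0$-orbits on $\bigsqcup_{\beta}I_{0,\beta}(L_0)$ via the bijection $x\mapsto\xi$; the paper states exactly this, only more tersely. One small correction: your parenthetical claim that $\Gamma$ preserves each $\beta$-stratum is not justified and is in general false --- a $\gamma\in\Gamma$ need not stabilize the decomposition $V=\bigoplus_j V_0 r_j$, so $Q(\gamma\xi,\gamma\xi)=Q(\xi,\xi)$ only forces ${}^t r\beta' r={}^t r\beta r$, not $\beta'=\beta$. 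Fortunately you never use this: the regrouping only requires that each \emph{$\Gamma_0$}-orbit lie in a single stratum, which holds because $\Gamma_0$ acts diagonally on $V_0^n$ preserving $Q_0$, so the rest of the argument stands as written.
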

\begin{proof}The cycle $C_0\cap C_b$ is a union of $C_0 \cap C_\xi$ as $\xi$ runs through representatives of $\Gamma$-orbits in $I_b(L)$. By Proposition \ref{decbasicint}, each $C_0 \cap C_\xi$ is a disjoint union of images of $C_{0,\gamma x}$, with $[\gamma]\in \Gamma_0 \backslash \Gamma /\Gamma_\xi$. On the other hand, each such $C_{0,\gamma x}$ is the basic cycle in $D_0/\Gamma_0$ associated to the $\Gamma_0$-orbit of $\gamma x$. 
\end{proof}

We wish to define an appropriate notion of volume for basic and special cycles on $D_0/\Gamma_0$, and correspondingly define the intersection volume of $C_0$ and $C_b$ by
\begin{align}I(C_0,C_b) = \sum_{\tiny \begin{array}{c}\beta\in \Herm_n(E_0)\\ {}^t r \beta r = b\end{array}} \vol(C_\beta).
\end{align}

We do this by means of a particular Schwartz-Bruhat function $\varphi_\KM\in \scrS(V_0(\RR)^n)$ constructed by Kudla and Millson, detailed in the appendix.

For $x\in V_0^n$, Let $C_x=D(x)/\Gamma(x)$ be the associated basic cycle in $D_0/\Gamma_0$. Since $D(x)$ is totally geodesic in $D_0$, there is a geodesic fibration $D_0 \rar D(x)$, which induces a map$\pi_x: D_0/\Gamma(x) \rar D(x)/\Gamma(x)$ whose fibers are geodesics. The function $f_x(h) = \varphi_\KM(h^{-1}x)$, \textit{a priori} defined on $H_0(\RR)$, descends to a function $f_x(z)$ on $D_0/\Gamma(x)$. It is integrable over $D_0/\Gamma(x)$, and for each $z\in D(x)/\Gamma(x)$, the fiber integral
$$ \int_{\pi^{-1}(z)} f_x(z) dz,$$
is \textit{independent} of $z$, and equal to $i^{-n} \exp(-\pi \Tr Q_0(x,x))$. Now we define the (regularized) volume $\vol(C_x)$ by the relation
\begin{align}\int_{D_0/\Gamma(x)} f_x(z) dz = \vol(C_x) i^{-n} \exp(-\pi \Tr Q_0(x,x)).
\end{align}
Here $dz$ denotes the canonical volume element induced by the Bergman metric on $D(x)$. Note that if $C_x$ has finite volume, then 
$$ \int_{D_0/\Gamma(x)} f_x(z) dz = \int_{C_x}\int_{\pi^{-1}(w)} f_x(z) dz dw = i^{-n} \exp(-\pi \Tr Q(x,x)) \int_{C_x} dw = \vol(C_x) i^{-n} \exp(-\pi \Tr Q_0(x,x).$$

\section{Theta integrals}

\subsection{Weil representation}

We at first let $(V,Q)$ be a general non-degenerate hermitian space over $E$ of dimension $m$, and $(W,J)=(W_n,J_n)$ the split skew-hermitian space over $E$ of dimension $2n$. Let $g=[F:\QQ]$. As before, associated unitary will be denoted groups $G$ and $H$. 

Let $\psi: \Ads /\QQ \rar \CC^\times$ be a non-trivial additive character,and $\chi: \Ads_E^\times/E^\times \rar \CC^\times$ a Hecke character such that $\chi|_{\Ads_F^\times} = \epsilon^m$ where 
$\epsilon: 
\Ads_F^\times/F^\times \rar \CC^\times$ is the quadratic character associated 
to $E/F$.

Let 
$$ \calS=\scrS(V(\Ads)^n)$$
be the space of Schwartz-Bruhat functions. We will occasionally identify it with $\scrS(Y\otimes_E V \otimes \Ads)$ using the fixed isomorphism 
$E^n\simeq Y= \Span_E\{e_1,\cdots, e_n\}.$

Associated to the data $(V,Q)$, $(W,J)$, $\psi$, and $\chi$, is a model of the Weil representation $\omega = \omega_{\psi, \chi}$ of 
$G(\Ads)\times H(\Ads)$ acting on $\calS$. The action of $h\in H(\Ads)$ on 
$\phi\in \calS$ is given by
\begin{align} (\omega(h)\phi)(x) = \phi(h^{-1} x), \ \ x \in V(\Ads)^n.
\end{align}

Let $w_0\in G(\QQ)=U(W,J)(F)$ be defined by
$$ w_0(e_i) = -f_i,\ \ \ w_0(f_j) = e_j.$$
The action of $G(\Ads)$ on $\calS$ is determined by
\begin{align*}
(\omega(m)\phi)(x) &= \chi(\det(a)) |\det(a)|_{\Ads_E}^{m/2} \phi(a^{-1} 
x), & & m=m(a)\in M(\Ads),\ a\in\GL_n(\Ads_E),\\
(\omega(n)\phi)(x) &= \psi(\tr( x,x)b) \phi(x),& & n=n(b)\in 
N(\Ads),\ b\in \Herm_n(\Ads_E),\\
(\omega(w_0) \phi)(x) &= (\calF_{Q} \phi)(x) = \int_{V(\Ads)^n} 
\phi(y) 
\psi((y,x)) dy,
\end{align*}
where $dy$ is the Haar measure with respect to the Fourier transform 
$\calF_{Q} = \calF_{Q,\psi}$ satisfies the Fourier inversion formula $\calF_\QQ( \calF_\QQ \phi)(x) = \phi(-x)$.

\subsection{Seesaw dual-reductive pairs} For $(V,Q)$ and $(W,J)$ as above, put
$$ \WW = W\otimes_E V,\ \ \   \langle\ , \ \rangle = \Tr_{E/F}(
J^\sigma \otimes_E Q).$$
Then $(\WW, \langle\ , \ \rangle)$ is a symplectic $F$-vector space. The 
$F$-groups $U(W,J)$ and $U(V,Q)$ form a \textit{dual reductive pair} in 
$\Sp(\WW)$. In other words, they are mutual centralizers under the 
natural map $U(W,J) \times U(V,Q) \hra \Sp(\WW)$. 
Applying $\Res^F_\QQ$ we obtain 
\begin{align}G\times H \hra \Res^{F}_\QQ \Sp(\WW).
\end{align}

Now assume that $(V,Q)=(V_0,Q_0)\otimes_{E_0}E$, and put
$$ W_0 = \Res_{E/E_0} W,\ \ \ J_0 = \Tr_{E/E_0} J.$$
Similar to $(\WW,\langle\ ,\ \rangle)$ define
\begin{align}\WW_0 = W_0\otimes_{E_0} V_0,\ \ 
\langle\ ,\ \rangle_0 = \Tr_{E_0/\QQ}(J_0^\sigma\otimes_{E_0} Q_0).\end{align}
Then $(\WW_0, \langle\ , \ \rangle_0)$ is a symplectic $\QQ$-vector space, and 
$G_0=U(W_0,J_0)$, $H_0=U(V_0,Q_0)$ are dual reductive 
pairs via the natural embedding
$$ G_0\times H_0 \hra \Sp(\WW_0).$$

It is easy to verify that there are canonical identifications
\begin{align}\WW_0 = \Res_{E/E_0} \WW,\ \ \ \langle\ ,\ \rangle_0 = 
\Tr_{F/\QQ}\langle\ ,\ \rangle.
\end{align}
There is a canonical homomorphism $\Res^F_\QQ \Sp(\WW) \rar \Sp(\WW_0)$, and 
the inclusions $H$, $G \subset \Res^F_\QQ \Sp(\WW)$, $H_0$, $G_0 \subset 
\Sp(\WW_0)$ are compatible with $\iota: H_0 \hra H$ and $\jmath: G \hra 
G_0$. In the terminology of \cite{K83}, $(G,H)$ and $(G_0,H_0)$ are 
\textit{seesaw 
	weakly dual pairs}, represented by a diagram
$$ \xymatrix { G_0 \ar@{-}[d] \ar@{-}[dr] &  H \ar@{-}[dl] \ar@{-}[d] \\
	G & H_0.}$$
Here the oblique lines indicate mutual centralizers and the vertical lines are 
inclusions.

Suppose $\psi_0: \Ads/\QQ \rar \CC^\times$ is a non-trivial additive character and $\chi_0: \Ads^\times_{E_0}/{E_0}^\times \rar \CC^\times$ be a Hecke character such that 
$\chi_0|_{\Ads^\times} = \epsilon_{E_0}^m$. 
where $\chi_{E_0}$ is the quadratic character associated to $E_0/\QQ$. Then 
again we have an associated Weil representation $\omega_0 = 
\omega_{\psi_0,\chi_0}$ of $G_0(\Ads)\times H_0(\Ads)$ acting on $\scrS(Y_0\otimes V_0(\Ads))$, where $Y_0 = \Res_{E/E_0} Y$ is a maximal isotropic subspace of $(W_0,J_0)$. 

As $E_0$-vector spaces we have 
$$V\otimes_E Y \cong (V_0 \otimes_{E_0} E)\otimes_E Y  \cong 
V_0\otimes_{E_0} Y_0$$
so that $\scrS(V_0\otimes_{E_0} Y_0)$ is in fact canonically identified with 
$\scrS(V\otimes_{E} Y)=S$. In order to obtain compatible Weil representations 
$\omega$ and $\omega_0$, we assume
\begin{align} \psi = \psi_0 \circ \Tr_{F/\QQ},\ \ \ \chi = \chi_0 \circ 
\mathrm{Nm}_{E/E_0}.
\end{align}
For instance $\psi_0$ and $\chi_0$ may be chosen first and $\psi$, $\chi$ defined as above.

Recall the $\ZZ$-bases $S=\{s_i\}_{i=1}^g$ and $R=\{r_i\}_{i=1}^g$ fixed for $\calO_F$ and 
$\frakd_F^{-1}$ in 
(\ref{bb}). For $b\in F$, let $B\in M_{g}(F)$ be the matrix of the $\QQ$-linear 
map $F\rar F,\ x\mapsto bx$, with respect to $S$ in the source, and 
$R$ in the target. Let $\xi = r_1 x_1 + \cdots r_g x_g\in V(F)$, with $x_i\in 
V_0$, and similarly $\eta = r_1 y_1 + \cdots r_g y_g$. Denote by $Q(x,y)\in M_g(E)$ the matrix with entries $Q(x_i,y_j)$.
\begin{lemma}\label{tracelem}$\tr_{E/E_0}(Q(\xi,\eta)b) = \Tr (Q(x,y) B)$.
\end{lemma}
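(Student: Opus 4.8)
The plan is to unwind both sides into an honest computation with the traces and the matrix $B$. First I would recall that $b$ acts on $F$ by multiplication, and with respect to the bases $S=\{s_i\}$ (source) and $R=\{r_j\}$ (target) this map has matrix $B=(B_{ij})$, meaning $b s_j = \sum_i B_{ij} r_i$. The key observation I want to exploit is that the trace form pairs $S$ and $R$ dually: $\Tr_{F/\QQ}(s_i r_j) = \delta_{ij}$, since $R$ is the trace-dual basis of $\frakd_F^{-1}$ to $S$. From this one extracts the entries of $B$ as $B_{ij} = \Tr_{F/\QQ}(b s_j r_i)$ — indeed $\Tr_{F/\QQ}(b s_j r_k) = \sum_i B_{ij}\Tr_{F/\QQ}(r_i r_k)$ is not quite what I want, so instead I would pair against $S$: from $b s_j = \sum_i B_{ij} r_i$, apply $\Tr_{F/\QQ}((-)\,s_k)$ to get $\Tr_{F/\QQ}(b s_j s_k) = \sum_i B_{ij}\Tr_{F/\QQ}(r_i s_k) = B_{kj}$. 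So $B_{kj} = \Tr_{F/\QQ}(b\, s_j s_k)$.

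Next I would expand the left-hand side. Since $\xi = \sum_j r_j x_j$ and $\eta = \sum_k r_k y_k$ with $x_j, y_k \in V_0$, and $Q$ is $E$-sesquilinear with $Q(\xi,\eta)$ landing in $E$ via the $E$-structure on $V$, one has $Q(\xi,\eta) = \sum_{j,k} \bar r_j r_k\, Q(x_j, y_k)$ — but here one must be careful about where conjugation acts and about the fact that $r_j \in \frakd_F^{-1} \subset F$, so $\bar r_j = r_j$ as $r_j$ is totally real (being in $F$). Thus $Q(\xi,\eta)b = \sum_{j,k} r_j r_k b\, Q(x_j,y_k)$, and applying $\tr_{E/E_0}$ and then linearity gives $\tr_{E/E_0}(Q(\xi,\eta)b) = \sum_{j,k} \tr_{E/E_0}\big(r_j r_k b\, Q(x_j,y_k)\big)$. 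Because $r_j r_k b \in F$ and $\tr_{E/E_0}$ is $E_0$-linear — and in fact here I should track that $\tr_{E/E_0}$ composed with the relevant structure recovers multiplication by the scalar $\Tr_{F/\QQ}(r_j r_k b) = B_{kj}$ (using the identity from the first paragraph, adjusting indices). So the left side becomes $\sum_{j,k} B_{kj}\, \tr_{E/E_0}^{(0)}\!\big(Q(x_j,y_k)\big)$ where I write things so the remaining trace is the $E/E_0$-trace applied entrywise.

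On the right-hand side, $\Tr(Q(x,y)B)$ with $Q(x,y) = (Q(x_j,y_k))_{j,k}$ an $M_g$ matrix over $E$ and $B\in M_g(F)$: by definition of matrix trace, $\Tr(Q(x,y)B) = \sum_{j,k} Q(x_j, y_k)\, B_{kj}$, and then applying the ambient trace to $E_0$ (implicit in the statement, since both sides live in $E_0$) gives exactly $\sum_{j,k} B_{kj}\,\tr_{E/E_0}(Q(x_j,y_k))$, matching the left side. So the proof is a matching of two double sums, with the whole content being (i) the duality $\Tr_{F/\QQ}(s_i r_j)=\delta_{ij}$ identifying $B$'s entries, and (ii) bookkeeping of which variable carries conjugation and that the $r_j$ are totally real so conjugation is invisible on them.

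The main obstacle I anticipate is purely notational: keeping straight the three traces in play ($\Tr_{F/\QQ}$ hidden inside $B$, $\tr_{E/E_0}$ on the left, and the $\Tr$ of a matrix over $E$ on the right whose output must still be pushed to $E_0$), together with the transpose/conjugate conventions in $Q(x,y)$ versus $Q(\xi,\eta)$ — i.e. making sure the index $B_{kj}$ versus $B_{jk}$ lands correctly so the two double sums coincide term by term rather than only after reindexing. Once the dual-basis identity $B_{kj} = \Tr_{F/\QQ}(b\,s_j s_k)$ is in hand and one notes $r_j\in F$ implies $\bar r_j = r_j$, the rest is a direct expansion with no analytic input.
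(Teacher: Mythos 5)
The paper offers no proof of this lemma (it is ``left as linear algebra exercise''), so there is nothing to compare against; your approach --- extract the entries of $B$ via the trace pairing, expand $\xi,\eta$ bilinearly using that the $r_j$ are fixed by conjugation, and use the projection formula $\tr_{E/E_0}(f\,e_0)=e_0\,\Tr_{F/\QQ}(f)$ for $f\in F$, $e_0\in E_0$ to reduce both sides to the double sum $\sum_{j,k}Q(x_j,y_k)\,\Tr_{F/\QQ}(b\,r_jr_k)$ --- is exactly the intended argument, and all the essential ingredients are present.

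There is, however, one step in your write-up that does not close as stated. From the paper's literal convention ($S$ in the source, $R$ in the target, i.e.\ $bs_j=\sum_i B_{ij}r_i$) you correctly derive $B_{kj}=\Tr_{F/\QQ}(b\,s_js_k)$; but in the final matching you use $B_{kj}=\Tr_{F/\QQ}(b\,r_jr_k)$, and these two expressions are \emph{not} equal in general --- no ``adjusting of indices'' reconciles them, since one involves the basis $\{s_j\}$ and the other its trace-dual $\{r_j\}$. What is actually true is that the identity $B_{kj}=\Tr_{F/\QQ}(b\,t_jt_k)$ holds when $B$ is the matrix of multiplication by $b$ \emph{from} the basis $\{t_j\}$ in which $\xi=\sum_j t_jx_j$ is expanded \emph{to} its trace-dual basis; so either the expansion of $\xi$ must be taken in the source basis of $B$, or the source/target roles in the definition of $B$ must be swapped. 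To be fair, this mismatch is inherited from the paper, which is itself inconsistent about which of $\{r_i\}$, $\{s_i\}$ spans $\calO_F$ versus $\frakd_F^{-1}$ and which basis enters the identification $V_0^n\simeq V$; but a complete proof should make the consistent choice explicit rather than assert the two trace expressions coincide. Once that single convention is pinned down, your computation is correct and complete.
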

\begin{proof}Left as linear algebra exercise.
\end{proof}

We fix the isomorphism 
\begin{align}\label{V0^nV} \scrS(V(\Ads)^n) \isomto \scrS(V_0(\Ads)^{ng}),\ \ \ 
\varphi \mapsto 
\varphi_0=\varphi\circ \beta_{V_0(\Ads)}^{n},\end{align}
with $\beta$ as in (\ref{bb}).

\begin{prop}The map $\varphi \mapsto \varphi_0$ intertwines the representations 
	$\omega_{Q,\psi,\chi}$ and $\omega_{Q_0,\psi_0,\chi_0}$ of $G$. In other words for 
	each $g\in G(\Ads)$ the following diagram commutes:
	$$ \xymatrix{ \scrS(V(\Ads)^n) \ar[d]_{\omega_{Q,\chi,\psi}(g)} 
		\ar[r]^{\varphi 
			\longmapsto \varphi_0} & 
		\scrS(V_0(\Ads)^{ng}) 
		\ar[d]^{\omega_{Q_0,\chi_0,\psi_0}(\jmath_0(g))}\\	\scrS(V(\Ads)^n) 
		\ar[r]_{\varphi \mapsto \varphi_0} & 
		\scrS(V_0(\Ads)^{ng}).}$$
\end{prop}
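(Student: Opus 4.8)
The plan is to verify the commutativity of the diagram generator-by-generator, using the fact that $G$ is generated by the Siegel parabolic $P(\Ads)$ together with the Weyl element $w_0$, and that the inclusion $\jmath: G \hra G_0$ sends these generators to the corresponding generators of $G_0$ under the identification of $(W_n, J_n)$ over $E$ with $(W_{ng}, J_{ng})$ over $E_0$ coming from (\ref{RW}). Concretely, I would check three cases: (i) $g = n(b) \in N(\Ads)$ with $b \in \Herm_n(\Ads_E)$; (ii) $g = m(a) \in M(\Ads)$ with $a \in \GL_n(\Ads_E)$; and (iii) $g = w_0$. In each case one writes out both $\omega_{Q,\psi,\chi}(g)$ acting on $\varphi$ and then precomposing with $\beta^n_{V_0(\Ads)}$, versus first precomposing and then applying $\omega_{Q_0,\psi_0,\chi_0}(\jmath(g))$, and shows the two agree pointwise.

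For the unipotent case (i), the key point is the compatibility of the trace pairings: the quadratic-form datum transforms via $\psi(\tr_{E/F}\langle x, x\rangle \, b)$ on the $E$-side and via $\psi_0(\Tr_{E_0/\QQ}\langle \cdot, \cdot \rangle_0 \, B)$ on the $E_0$-side, where $B$ is the matrix attached to $b$ as in Lemma~\ref{tracelem}. Since $\psi = \psi_0 \circ \Tr_{F/\QQ}$ and, by Lemma~\ref{tracelem} applied entrywise, $\tr_{E/E_0}(Q(\xi,\eta)b) = \Tr(Q(x,y)B)$, the two additive characters evaluated on the images under $\beta^n$ coincide; multiplication by $\varphi$ then matches on both sides. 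For the Levi case (ii), one uses $\chi = \chi_0 \circ \Nm_{E/E_0}$ and the fact that the determinant and absolute-value normalizing factors transform correctly: $\det_E(a)$ maps to $\Nm_{E/E_0}\det_E(a) = \det_{E_0}$ of the $E_0$-linear operator that $a$ induces on $E^n \cong E_0^{ng}$, and likewise $|\det(a)|_{\Ads_E}^{m/2}$ matches $|\det|_{\Ads_{E_0}}^{m/2}$ because $m$ is unchanged (the hermitian space $V_0$ also has dimension $m$), so the normalizations are consistent; the $\varphi(a^{-1}x)$ part matches because $\beta^n$ is $E_0$-linear and intertwines the $a$-action with the induced $\jmath(a)$-action on $V_0(\Ads)^{ng}$.

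The case (iii), the Weyl element, is the one I expect to be the main obstacle: here one must compare the Fourier transform $\calF_Q$ over $V(\Ads)^n$ with the Fourier transform $\calF_{Q_0}$ over $V_0(\Ads)^{ng}$, under the change of variables $\beta^n_{V_0(\Ads)}$. The relevant facts are that $\beta^n$ is a linear isomorphism $V_0(\Ads)^{ng} \isomto V(\Ads)^n$ carrying the $F$-bilinear pairing $\Tr_{E/F}Q$ (and hence $\Tr_{F/\QQ}$ of it) to the $\QQ$-bilinear pairing $\Tr_{E_0/\QQ}Q_0$ — this is again an instance of Lemma~\ref{tracelem} and the trace-duality of the bases $R$, $S$ — and that the self-dual Haar measures with respect to $\psi$ and $\psi_0$ are compatible under $\beta^n$ precisely because $R$ and $S$ are trace-dual (so $\beta$ has ``measure-preserving'' determinant in the appropriate sense). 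Granting this, Fourier transform commutes with the linear change of variables, and the Weyl-element relation follows. Finally, since the three generator cases hold and the maps in question are representation homomorphisms on both sides, the diagram commutes for all $g \in G(\Ads)$; alternatively one notes the set of $g$ for which the diagram commutes is a subgroup of $G(\Ads)$ containing the generators. I would present (i) and (ii) quickly and devote the bulk of the argument to (iii), packaging the measure-and-pairing compatibility as a short lemma.
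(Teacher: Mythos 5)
Your proposal is correct and follows essentially the same route as the paper: a generator-by-generator check on the Siegel parabolic and the Weyl element, using Lemma~\ref{tracelem} for the unipotent elements, the compatibility $\chi=\chi_0\circ\Nm_{E/E_0}$ for the Levi, and a comparison of the two Fourier transforms under the change of variables $\beta$ for $w_0$. One caution: the measure compatibility at $w_0$ is not a formal consequence of trace-duality alone --- the paper establishes the Jacobian constant $C=1$ by a local-global computation in which the finite places contribute a factor $|D_F|^{1/2}$ and the archimedean place contributes $|D_F|^{-1/2}$, so the ``short lemma'' you propose should be precisely this product-formula cancellation.
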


\begin{proof}
	It's enough to verify this for $n=1$, since taking direct sums implies the 
	statement for higher $n$.
	
	For $n= \left(\begin{array}{cc} 1 & b \\ & 1 \end{array}\right)$, $b\in 
	F$, 
	and $\phi\in \scrS(V(\Ads))$ we have
	$$ \omega_Q(n)\phi(\xi) = \psi(Q(\xi,\xi)b)\phi(x) = \psi_0(\tr_{F/\QQ} 
	Q(x,x)b)\phi(x) = \psi_0(\Tr(Q(x,x)\cdot B))\phi(x),$$
	the last identity by the lemma. The desired compatibility for $n\in N(\Ads)$ follows, since 
	$n$ 
	corresponds to $\left(\begin{array}{cc}1 & B \\ & 1 \end{array}\right)$ in $G_0(\Ads)$
	under the identification (\ref{RW}) of split skew-hermitian spaces 
	$E^2$ and $E_0^{2g}$ over $E_0$.

	The element $\omega_0\in G$ acts on $\scrS(V(\Ads))$ by Fourier 
	transform. We have
	$$ (\calF_{Q_0}\varphi_0) (y) = \int_{V_0(\Ads)^{g}} \varphi_0(x) 
	\psi_0(\tr Q_0(y,x))dx.$$
	For $\eta = \beta_\Ads(y)$, $\xi = \beta_\Ads(x)$, by the lemma we have
	$\psi_F (Q(\eta,\xi))= \psi_0( \tr Q_0(y,x))$. Changing variables by 
	$\beta_\Ads: 
	V_0(\Ads)^g \rar V(\Ads)$, we get
	
	$$ (\calF_{Q_0}\varphi_0)(\beta^{-1}(\eta)) = \int_{V(\Ads)} \varphi(\xi)  
	\psi(\tr Q(\eta,\xi)) C d\xi = C \cdot (\calF_{Q}\varphi)(\eta)$$
	where $C$ is determined by $({\beta_\Ads}^{-1})^*(dx) = C \cdot d\xi$. We 
	claim 
	that $C=1$, i.e. $(\beta_\Ads^*)(d\xi) = dx$.
	
	The product formula implies that the Haar measure on $V_0(\Ads)^g$ with 
	respect to which $\calF_{Q_0}$ is self-dual, in fact doesn't depend on 
	$Q_0$, 
	even though self-dual local factors do. This essentially comes down to  $|\det Q_0|_{\Ads_{E_0}}=1$. It follows that to check $C=1$ 
	we 
	can 
	assume $(V_0,Q_0)$ is the standard split hermitian form on $E_0^{m}$, and further that $m=1$.
	
	Write the self-dual measure $d\xi$ on $V(\Ads)=\Ads_{E}$ as a product $d\xi_{\infty} d\xi_0 = d\xi_\infty \prod_{v} d\xi_{v}$ of local 
	self-dual measures $d\xi_{v}$ on $E_v$. For each rational prime $p$, we have $\beta\otimes \ZZ_p=\beta_p: \ZZ_p^g 
	\rar 
	\prod_{v|p} \frakd_{F_v}^{-1}$. Since $d\xi_v$ is the self-dual measure on 
	$\calO_{F_v}$, $\beta_p^*(d\xi_v)$ 
	gives $\ZZ_p^g$ measure $|D_{F_v}|^{1/2}$. 
	Then $\beta^*(d\xi_0)$ gives $\wh{\ZZ}^g$ measure $|D_F|^{1/2}$. It follows that if $\{dx_{w_0}\}$ are self-dual measures on $V_0(E_{0,w_0})^g$ at all finite places $w_0$ of $E_0$, then $(\beta'_{\wh{\ZZ}})^*(\prod_w d\xi_{w}) = |D_F|^{1/2} 
	\prod_{w_0} dx_{w_0}$. On the other hand the self-dual measures 
	$d\xi_{\infty}$ and $dx_{\infty}$ on $E\otimes \RR$ and $E_0^g \otimes \RR$ both coincide with that on $\CC^{g}$. Then via $\beta_\infty: E_0\otimes \RR \rar E\otimes \RR$ we have $(\beta_\infty)^*(d\xi_{\infty}) = |\det \beta_\infty|
	dx_{\infty} = |D_F|^{-1/2}  dx_\infty.$
	It follows that
	$$ (\beta_\Ads)^*(d \xi) = |D_F|^{-1/2} dx_{\infty} \cdot |D_F|^{1/2} dx_f = 
	dx.$$
	
	The restriction map $\Gal(F^\ab/F) \rar \Gal(\QQ^\ab/\QQ)$ is surjective and induces an isomorphism $\Gal(E/F) \rar \Gal(E_0/\QQ)$. Under the reciprocity map of global class field theory, that restriction map is induced by the norm $\Nm_{F/\QQ}: \Ads_F^\times \rar \Ads^\times_\QQ$ which is the same as $\Nm_{E/E_0}$ restricted to $\Ads_F^\times$. This implies if $\chi_0 |_{\Ads^\times}=\epsilon_{E_0/\QQ}^m$, then $\chi_0\circ \Nm_{E/E_0}|_{\Ads_E^\times}=\epsilon_{E/F}^m$.  The compatibility of $\omega$ and $\omega_0$ for elements $m(a)\in G(\Ads)$ follows from this.
\end{proof}

\subsection{Fourier coefficients of theta integrals}

Using the Weil representation $\omega$ of $G(\Ads)\times H(\Ads)$ one associates to each $\phi\in 
\calS$ the theta kernel 
\begin{align}\Theta_\phi(g,h) = 
\sum_{x\in V(\QQ)^n} (\omega(g,h) \phi)(x),\ \ \ g\in G(\Ads),\ h\in H(\Ads).\end{align}
One may consider the theta integral  
\begin{align}
\int_{[H]} \Theta_{\phi}(g,h) dh,
\end{align}
where $dh$ is the invariant measure on $[H]=H(\QQ)\backslash H(\Ads)$ giving it total volume $1$. Let $r_V$ denote the Witt index of $(V,Q)$. By a theorem of Weil, the theta integral converges absolutely for all $\phi$ if and only if either $V$ is anisotropic, or
$$ \dim V - r_V > \frac{1}{2}\dim W +1.$$

Now we consider again the case $(V,Q)=(V_0,Q_0)\otimes_{E_0} E$, 
with $(V_0,Q_0)$ of signature $(n,1)$, and $(W_0,J_0)=\Res_{E/E_0} (W,J)$. For 
the pair of spaces $(V_0,Q_0)$, $(W_0,J_0)$, the numbers are
$$ \dim V_0  = n+1,\ \ r_{V_0} = 1,\ \ \dim W_0 = 2n.$$
These fall just outside the convergence range of Weil, so there is no guarantee 
that the theta integrals associated to $\omega_0$ converge. However, for the 
pair $(V,Q)$, $(W,J)$, we have 
$$ \dim V = n+1,\ \ \ r_{V} = 1,\ \ \dim W = 2,$$ 
which is within the convergence range since $n>1$. For $\phi\in \calS$, let 
$\phi_0$ denote its image in $\calS = \scrS(V(\Ads))$. The compatibility of 
the Weil representations $(\omega,\calS)$ and $(\omega_0,\calS_0)$ implies that
for $g\in G(\Ads)$, $h_0\in H_0(\Ads)$,
\begin{align}\label{TT} \Theta_{\phi_0}(\jmath(g),h_0) = 
\Theta_{\phi}(g,\iota(h_0))\end{align}
which implies 
$$ I(g;\phi) = \int_{[H_0]} \Theta_\phi(g,\iota(h_0)) dh_0$$
converges absolutely for any $\phi\in \calS$, $g\in G(\Ads)$. For convenience 
we now drop $\iota$ and $\phi_0$ from the notation, and simply write 
$\Theta_\phi(g,h_0)$, understood to mean either side of (\ref{TT}).

For $b\in F$, the $b$th Fourier coefficient of $I(g,\phi)$ with respect to 
the Siegel parabolic $P(\Ads)\subset G(\Ads)$ is by definition
$$ I_{b}(g;\phi) = \int_{N(\Ads)} I(ng;\phi) \ol{\psi(\tr_{E/F} bb_0)} dn,$$
where $n=n(b_0) = \left(\begin{array}{cc} 1 & b_0 \\ &1 \end{array}\right)\in 
N(\Ads).$

Now considering $\phi$ as an element of $\scrS(V(\Ads))$, we may write
\begin{align*} I_{b}(g;\phi) &= \int_{N(\Ads)} \int_{[H_0]}\sum_{\xi\in V(\QQ)} 
(\omega(ng;\phi)\phi)(\xi) \psi(-\tr_{E/F} bb_0)dh dn\\
&= \int_{[H_0]}\sum_{\xi\in V(\QQ)} \omega(g,h)\phi(\xi) \int_{N(\Ads)} \psi(\tr_{E/F}((Q(\xi,\xi)-b)b_0) dn dh.\end{align*}
The integral then picks out the terms $\xi\in V(\QQ)$ with $Q(\xi,\xi)=b$. As the volume of $N(\Ads)$ is 1, we obtain
$$ I_b(g,\phi) = \int_{[H_0]} \Theta_{b}(g,h;\phi) dh$$
where
$$ \Theta_b(g,h;\phi) =\sum_{\tiny \begin{array}{c}\xi\in V(\QQ)\\ Q(\xi,\xi)=b\end{array}} \omega(g)\phi(h^{-1}\xi).$$

Now assume that $\phi = \phi_\infty \otimes \phi_L$, where $\phi_L\in 
\scrS(V(\Ads_f))$ is the characteristic function of $L\otimes \Ads_f \subset 
V(\Ads_f)$, and that $\phi_\infty\in \scrS(V(\RR))$ is 
invariant under a maximal compact subgroup $K_0\subset H_0(\RR)$-invariant.  
Let $K_0(L)\subset H_0(\Ads_f)$ be the stabilizer 
of $L\otimes \Ads_f$, considered as a lattice in $V_0^n$, take 
representatives $h_1,\cdots h_l$ for 
$H_0(\QQ)\backslash H_0(\Ads_f)/K_0(L)$, and put
\begin{align} L_i = h_i(L\otimes \Ads_f) \cap V(\QQ),\ \ \ K_i = h_i K_0(L) 
h_i^{-1},\ \ \ \Gamma_i = K_i\cap H_0(\QQ).\end{align}

Then for $g_\infty\in G(\RR)$, 
$$ I_b(g_\infty,\phi) = \sum_{i=1}^l \int\limits_{\Gamma_i \backslash 
D_0} 
\sum_{\tiny 
\begin{array}{c}\xi\in L_i\ \ Q(\xi,\xi)=b\end{array}} 
\omega(g_\infty)\phi_\infty(h^{-1} \xi) dh,$$
where $dh$ is the invariant measure on $\Gamma_i\backslash D_0$ induced from 
$H_0(\RR)$.

Now let
$$I_{L_i,b} = \{\xi\in L_i: Q(\xi,\xi)=b\}.$$
For $\xi\in I_{L_i,b}$ we write $\calO_{\xi}$ for the $\Gamma$-orbit of $\xi$. 
Then for some $\xi_1,\cdots, \xi_t$, 
$$ I_b(g_\infty,\phi) = \sum_{i=1}^l \sum_{i=1}^t 
\int\limits_{\Gamma_i\backslash D_0} 
\sum_{\xi\in \calO_{\xi_t}} \omega(g_\infty)\phi_\infty(h^{-1}\xi)dh.$$
Now let $\Gamma_\xi$ denote the stabilizer of $\Gamma$ acting on $\xi$. Each 
orbit $\calO_{\xi}$ is then in bijection with $\Gamma/\Gamma_\xi$. Then 
$$ I_b(g_\infty;\phi)=\sum_{i=1}^l \sum_{j=1}^t \sum_{\gamma\in 
\Gamma_i\backslash\Gamma/\Gamma_\xi}\ \  \int\limits_{\Gamma_{\xi,i}  
\backslash 
D_0} \omega(g_\infty)\phi_\infty(h^{-1}\gamma \xi)dh$$
where $\Gamma_{\xi,i} = \Gamma_\xi \cap \Gamma_i$. Now put
$$C_{\xi,i} = \Gamma_{\xi,i}\backslash D_{\xi,i},\ \ \ M_i = \Gamma_i 
\backslash D_0,\ \ E_i = \Gamma_{\xi,i}\backslash D_0.$$
Then $C_{\xi,i}$ is a special cycle on $M_i$ via a map
$$ \iota_{\xi,i}: C_{\xi,i}  \rar M_i$$
induced by $D_\xi \subset D_0$. The natural inclusion $D_{\xi,i} \hra D_0$ 
induces a fibration $D_0 \thrar D_{\xi,i}$ by geodesics, which provides a 
fibration $\pr_i: E_i \rar C_{\xi,i}$. If $f$ is an integrable function on 
$E_i$, we may compute the integral using Fubini's theorem by first integrating 
over the fibers of $\pr_i$ to get $(\pr_i)_* f$, then integrating that on 
$C_{\xi,i}$. Now suppose $f=f_{g,\xi}$ is the function on $D_0$ induced by 
$h\mapsto \omega(g)\phi_\phi(h^{-1}\xi)$. If $\kappa_{g,\xi}=(\pr_{i})_* 
f_{g,\xi}$ is \textit{constant} on $D_{\xi,i}$, we obtain 
$$ I_b(g_\infty;\phi)=\sum_{i=1}^l \sum_{j=1}^t \sum_{\gamma\in 	
\Gamma_i\backslash\Gamma/\Gamma_\xi}\ \  \vol(C_{\xi,i}) 
\kappa_{g_\infty,\gamma\xi}.$$
The Kudla-Millson Schwartz function $\phi_\KM\in \scrS(V(\RR))$ has this 
property and the corresponding constant, calculated in \cite{KM87}, is 
determined by
\begin{align}\label{kxi} \kappa(\xi) = \kappa_{1,\xi} = i^{-n} \exp(-\pi 
\tr_{E/F} Q(\xi,\xi)).\end{align}

Let $\tau=(\tau_i)_i \in \frakH^n$, write $\tau_i = u_i + i v_i$, and put $a_i 
= \sqrt{v_i}$. Let $\alpha\in \GL_1(E\otimes \RR)$ and $\upsilon\in 
\Herm_1(F\otimes\RR)$ correspond to $a=(a_i)_i$ and $u=(u_i)_i$, and put
$$ g_\tau =\left(\begin{array}{cc} 1 & u \\ & 1\end{array}\right) 
\left(\begin{array}{cc} a& \\ & {}^t \ol{a}^{-1}\end{array}\right)\in G(\RR).$$
Then for $\phi\in \scrS(V(\RR))$, by definition
$$ \omega(g_\tau)\phi(\xi) = \chi(\alpha) |\alpha|_{\Ads_{E}}^{m/2} 
\phi(\xi\alpha) \psi(\tr_{E/F} 
Q(\xi ,\xi)\upsilon).$$ 
We have $\chi(a) = 1$, since the assumption 
$\chi_{\Ads_{F}^\times}=\epsilon^m_{E/F}$ implies $\chi$ has trivial infinity 
type. Also
$$|\alpha|_{\Ads_{E}}=|\Nm(\det(\alpha))|_{\Ads_{E_0}} = \prod_i 
|a_i|^2 = \prod_i v_i,$$
and
$$ \psi ( Q(\xi,\xi)\nu) = \psi_0(\tr_{E/E_0} Q(\xi,\xi)\nu) = \psi_0(\Tr \beta 
u) = \exp(2\pi i \sum_j \lambda_j(b) u_j) = e_*(bu),$$
where $u$ is considered as a column vector.
Therefore
$$ \omega(g_\tau)\phi_\infty(\xi) = \phi_\infty(h^{-1}\xi a)\prod_i \Im(\tau_i) 
^{m/2}  
e_*(bu).$$
For $\phi=\phi_\KM$, it follows from (\ref{kxi}) that
$$ \int_{E_{\xi,i}} \omega(g_\tau) 
\phi_\KM(h^{-1}\xi)dh = \vol(C_{\xi,i})\prod_{i} \Im(\tau_i)^{m/2} e_*(bu) 
i^{-n} \exp(-\pi \tr_{E/F}Q(\xi a,\xi a)).$$
Now 
$$\exp(-\pi \tr_{E/F} Q(\xi a,\xi a)) = \exp(2\pi i \sum_{j}  \lambda_j(b)) 
i v_j ) = e_*(biv)$$
so in fact
$$ \int_{E_{\xi,i}} \omega(g_\tau) 
\phi_\KM(h^{-1}\xi)dh = \prod_{i} \Im(\tau_i)^{m/2} 
i^{-n} \vol(C_{\xi,i}) e_*(b\tau).$$

We then obtain the formula
\begin{align}\label{Ibform} I_b(g_\tau;\phi_\KM)=i^{-n} \prod_j|\Im \tau_j|^{m/2}\sum_{i=1}^l 
\sum_{j=1}^t 
\sum_{\gamma\in 	
	\Gamma_i\backslash\Gamma/\Gamma_{\xi_t}}\ \   
	\vol(C_{{\xi_t},i}) e_*(b \tau).\end{align}

For $\tau\in \frakH^n$, put
$$ F_i(\tau) = \sum_{b\in F} I(C_i,C_b) e_*(b\tau)$$
where $C_i = D_0/\Gamma_i$, and $I(C_i,C_b)$ is defined analogously to $I(C_0,C_b)$. Put
$$ F(\tau) = \sum_{i=1}^l F_i(\tau).$$
We have thus proven:

\begin{thm}$F(\tau)=i^n |\Im \tau_i|^{-m/2} I(g_\tau;\phi_\KM)$. In particular, $F(\tau)$ is a modular form of weight $\frac{m}{2}$. 
\end{thm}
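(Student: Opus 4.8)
The plan is to combine the Fourier-coefficient formula (\ref{Ibform}) with the geometric decomposition of special-cycle intersections from Theorem \ref{C0Cb} and Proposition \ref{decbasicint}, and then read modularity off from the automorphy of the theta integral. Most of the work has already been carried out in the preceding derivation; what remains is (a) to interpret the orbit sum in (\ref{Ibform}) as a sum of regularized intersection volumes, (b) to pass from the Fourier coefficients $I_b(g_\tau;\phi_\KM)$ back to $I(g_\tau;\phi_\KM)$ by an absolutely convergent rearrangement, and (c) to deduce the transformation law and weight.

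For (a): fix $i$ and apply Theorem \ref{C0Cb} with the lattice $L_i$ in place of $L$ (so the base-change cycle is $C_i=D_0/\Gamma_i$), which writes $C_i\cap C_b$ as the disjoint union of the cycles $C_\beta$ over $\beta\in\Herm_n(E_0)$ with ${}^t r\beta r=b$; by Proposition \ref{decbasicint} these $C_\beta$ are exactly the basic cycles $C_{\gamma\xi_j,i}=(\Gamma_i\cap\Gamma_{\gamma\xi_j})\backslash D(\gamma\xi_j)$ as $j$ ranges over $\Gamma$-orbit representatives on $I_{L_i,b}$ and $[\gamma]$ over $\Gamma_i\backslash\Gamma/\Gamma_{\xi_j}$. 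The regularized volume $\vol(C_{\gamma\xi_j,i})$ entering (\ref{Ibform}) is, by its defining relation, precisely the number occurring in $I(C_i,C_b)=\sum_\beta\vol(C_\beta)$, so the double sum over $(j,\gamma)$ in (\ref{Ibform}) equals $I(C_i,C_b)$ and hence
$$ I_b(g_\tau;\phi_\KM)=i^{-n}\prod_j|\Im\tau_j|^{m/2}\left(\sum_{i=1}^l I(C_i,C_b)\right)e_*(b\tau). $$
For (b): the theta integral $I(g;\phi_\KM)=\int_{[H_0]}\sum_{\xi\in V(\QQ)}\omega(g,h)\phi_\KM(\xi)\,dh$ converges absolutely for all $g\in G(\Ads)$ — shown above via the seesaw identity (\ref{TT}) together with the fact that $(V,Q),(W,J)$ lie in Weil's convergence range — so we may group the inner sum according to $b=Q(\xi,\xi)\in F$ and interchange summation to get $I(g_\tau;\phi_\KM)=\sum_{b\in F}I_b(g_\tau;\phi_\KM)$. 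Substituting the previous display and recalling $F(\tau)=\sum_i F_i(\tau)=\sum_{b\in F}\bigl(\sum_{i=1}^l I(C_i,C_b)\bigr)e_*(b\tau)$ yields $I(g_\tau;\phi_\KM)=i^{-n}\prod_j|\Im\tau_j|^{m/2}F(\tau)$, i.e. the asserted identity after multiplying by $i^{n}\prod_j|\Im\tau_j|^{-m/2}$.

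For (c): the theta kernel $\Theta_{\phi_\KM}(g,h)$ is left $G(\QQ)\times H(\QQ)$-invariant, so $I(g;\phi_\KM)$ is a left $G(\QQ)$-invariant, $K_G$-finite function of moderate growth on $G(\Ads)$, i.e. an automorphic form on $G$ (this is the automorphy assertion of Theorem A, which does not use the present identification). Since $G_\RR\simeq U(1,1)^n$ and $\dim W=2$, restricting to $G(\RR)$ and using the parametrization $\tau\mapsto g_\tau$ of $\frakH^n$ translates automorphy under the arithmetic subgroup $G(\QQ)\cap K_G$ into the transformation law of a Hilbert modular form of weight $m/2$ on $\frakH^n$: the weight is $m/2$ because $\omega(g_\tau)\phi_\KM$ carries exactly the automorphy factor $\prod_j\Im(\tau_j)^{m/2}$ that is cancelled in $F(\tau)=i^{n}\prod_j|\Im\tau_j|^{-m/2}I(g_\tau;\phi_\KM)$, and holomorphy follows from the holomorphy property of the Kudla--Millson form.

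The one ingredient genuinely external to the excerpt, and the main point of substance, is that last fact: that $\phi_\KM$ transforms under $K_{G,\infty}$ by the weight-$m/2$ character and is annihilated by the appropriate lowering operator, so that the lift is holomorphic; this is recorded in the appendix and I would cite it there. The only other place demanding care is checking that the two normalizations of $\vol$ agree — the one fixed in Section 2 via the fiber integral $i^{-n}\exp(-\pi\Tr Q_0(x,x))$ and the one implicit in (\ref{Ibform}) via (\ref{kxi}), namely $i^{-n}\exp(-\pi\,\tr_{E/F}Q(\xi,\xi))$ — under the identification $V_0^n\simeq V$, which is exactly Lemma \ref{tracelem}.
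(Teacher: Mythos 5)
Your proposal follows essentially the same route as the paper: it sums the Fourier-coefficient formula (\ref{Ibform}) over $b\in F$ (justified by the absolute convergence already established via the seesaw identity) and identifies the orbit sum $\sum_{j}\sum_{\gamma}\vol(C_{\xi_j,i})$ with $I(C_i,C_b)$ via Theorem \ref{C0Cb}. Your additional remarks on the weight, the volume normalizations, and holomorphy are correct elaborations of points the paper leaves implicit, but they do not change the argument.
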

\begin{proof}From (\ref{Ibform}) we get
	$$ I(g_\tau;\phi_\KM) = \sum_{b\in f} i^{-n} \prod_j|\Im \tau_j|^{m/2}\sum_{i=1}^l 
	\sum_{j=1}^t 
	\sum_{\gamma\in 	
		\Gamma_i\backslash\Gamma/\Gamma_{\xi_t}}\ \   
	\vol(C_{\xi,i}) e_*(b \tau).$$
	
By Theorem \ref{C0Cb} and the discussion preceding it, we have
$$ \sum_{j=1}^t \sum_{\gamma\in \Gamma_i \backslash \Gamma/\Gamma_{\xi_t}} \vol(C_{\xi_t,i})= I(C_i, C_b).$$
so that 
$$ I(g_\tau; \phi_KM) = \sum_{b\in F} I_b(g_\tau,\phi_\KM) = i^{-n} \prod_j |\Im \tau_j|^{m/2} \sum_{i=1}^l \sum_{b\in F} I(C_i,C_b)e_*(b\tau).$$
\end{proof}

In the second part of the article we show that $I(g;\phi_\KM)$, and hence $F(\tau)$, can be identified with the restriction of automorphic forms on $U(n,n)$ related to Siegel Eisenstein series. 

\section{Restrictions of Eisenstein Series}

We recall the construction of Eisenstein series associated to sections of parabolically induced representations. The Siegel-Weil formula identifies theta integrals in Weil's convergence range with values of such Eisenstein series at particular points. Outside this range the regularized-Siegel Weil formula gives an analogous statement obtained by regularizing the theta integral.

\subsection{Eisenstein series}

We extend the Hecke character 
$\chi:\Ads_{E}^\times/E^\times \rar \CC$ to a character of 
$P(\Ads)$ by 
\begin{align}\chi \left(\begin{array}{cc}a & * \\ & {}^t 
\ol{a}^{-1}\end{array}\right) = \chi(\det a).\end{align}

We also have the determinant character $p(a)\in P(\Ads) \mapsto 
|\det(a)|^s$, for $s\in \CC$. There is then a normalized induced representation 
\begin{align} I(s,\chi) = \Ind_{P(\Ads)}^{G(\Ads)} \chi |\det(a)|^s. \end{align}

A \textit{section} of $I(s,\chi)$ is a function $f^{(s)}: G(\Ads) \rar \CC$ 
depending on 
a parameter $s\in \CC$ such that 

\begin{align} f^{(s)}(pg) = \chi(a) \det|a|^{s+\frac{n}{s}} f(g), \ \ \ \text{ 
	for }g\in 
G(\Ads), p\in 
P(\Ads).
\end{align}

The section $f^{(s)}$ is called holomorphic if it's $K$-finite and holomorphic 
in the variable $s$. A \textit{standard section} is a holomorphic section whose 
restriction to $K$ is independent of $s$.

Put
\begin{align} s_0 = \frac{m-n}{2}.\end{align}

The Siegel Eisenstein series associated to a holomorphic section $f^{(s)}$ of 
$I(s,\chi)$ is the series

\begin{align} E(g,f^{(s)})=\sum_{\gamma \in P(F)\backslash G(F)} f^{(s)}(\gamma 
g).\end{align}

It is convergent for $s\geq n/2$, and has meromorphic continuation to the 
$s$-plane.

Associated to each $\varphi\in \scrS(V(\Ads)^n)$ is a standard 
\textit{Siegel-Weil section} $f_{\varphi}^{(s)}$ given by
\begin{align}\label{stdSW} f_\varphi^{(s)}(g) = |\det a(g)|_{\Ads_E}^{s-s_0} 
\omega(g) 
\varphi(0).\end{align}

The Eisenstein series $E(g,f_\varphi^{(s)})$ is holomorphic for 
$\Re(s)\geq 0$, except at possibly 
at $s=s_0$, where it has a pole of order at most $1$. Its Laurent 
expansion is written
\begin{align}\label{Esphi} E(g,f_{\varphi}^{(s)}) = 
\frac{A_{-1}(\varphi)}{s-s_0} + 
A_0(\varphi) + 
A_1(\varphi)\cdot (s-s_0)+ \cdots .
\end{align}

By a result of Weil, the theta integral converges absolutely for all 
$\varphi\in 
\scrS(V^n(\Ads))$ if and only if either $r=0$ or $m-n>r$. If furthermore 
$m>2n$, the Siegel-Weil formula holds:
$$E(g,f_{\varphi}^{(s_0)})=I_{Q}(g,\varphi).$$

\subsection{Regularized theta integrals}

Beginning with the work of \cite{KRa88}, the Siegel-Weil formula has been 
extended outside the Weil range. For this the theta integral on the right hand side of the formula must be regularized.

An ingredient used in the regularization is the auxiliary Eisenstein series 
$E_H(s,h)$ on 
$H(\Ads)$ defined by
\begin{align} E_{H}(s,h) = \sum_{\gamma \in P_H(F)\backslash H(F)} 
f_0^{(s)}(\gamma h),\end{align}
where
$f_0^{(s)}$ is the standard $K_H$-spherical section determined by 
$f_0^{(s)}(1)=1$. $E_{H}(s,h)$ has a pole of order at most 1 at $s=\rho_H$, 
where
\begin{align}\rho_H = \frac{m-r}{2},
\end{align}
and the residue 
\begin{align} \kappa = \Res_{s=\rho_H} E_H(s,h)\end{align}
is a constant function. 

The regularization in \cite{KRa88} is achieved using a particular element 
$z_G$ in 
the central enveloping algebra of 
$\Lie(G(\RR))_\CC$, such that
$\Theta(\omega(z)\varphi;g,h)$ is rapidly decreasing on $H$ for all $\varphi\in 
\scrS(V^n(\Ads))$ \cite{KRa88,GQT}. By Howe duality one may choose a 
corresponding element $z_H$ 
in the universal enveloping algebra of $\Lie(H(\RR))_\CC$, such that 
$\omega(z_G)$ and $\omega(z_H)$ coincide as operators on $\scrS(V(\Ads)^n)$. 
Then one has
\begin{align} \omega(z_H) \cdot E_H(s,h) = P(s)\cdot E_h(s,h),
\end{align}
for an explicitly computable function $P(s)$ that depends in general on the 
choice of $z_H$.

The regularized theta integral is then defined to be
\begin{align}\label{B}
B(s,\varphi) = \frac{1}{\kappa \cdot P(s)} \cdot \int_{[H]} 
\Theta(\omega(z)\varphi;g,h) E_{H}(s,h)dh.
\end{align}

We will assume the parameters $r,m,n$ satisfy
\begin{align}\label{pars} 0 < m-n \leq r \leq n.\end{align}
In that case $P(s)$ has a simple zero at $s=\rho_H$, and $B(s,\varphi)$ has a 
pole of 
order at most $2$ \cite[Lemma 3.8]{GQT}. By convention, the Laurent expansion 
of $B(s,\varphi)$ at $s=\rho_H$ is written
\begin{align}\label{LaurentB} B(s,\varphi) = 
\frac{B_{-2}(\varphi)}{(s-\rho_H)^2} + 
\frac{B_{-1}(\varphi)}{(s-\rho_H)}+B_0(\varphi)+\cdots\ .\end{align}

\subsection{Regularized Siegel-Weil formula}

Since $m-n>0$ by the assumption (\ref{pars}), the parameters fall in the 
so-called second-term range. In this case the regularized Siegel-Weil formulas 
of \cite{Ich04MZ} and \cite{GQT} apply, which we now briefly state.

Let $V''\subset V$ be a split hermitian subspace of dimension $2(m-n)$, and 
denote its orthogonal complement by $V'$. Put $Q'=Q|_{V'}$ and
\begin{align*}H'=U(V',Q').\end{align*}

Writing $V''$ as a direct sum $X \oplus X^*$ of (dual) isotropic subspaces of 
dimension $r_0=m-n$, we have $V=X \oplus V' \oplus X^*$. Using this 
decomposition, we define the parabolic subgroup $P_H\subset H$ by

\begin{align} P_H(R) = \left\{ \left(\begin{array}{ccc} a  & * & * \\ & g & 
* \\ & & 
{}^t \ol{a}^{-1}\end{array}\right): a\in \GL_{r_0}(R\otimes X),\ g\in 
H'(R)\right\},\ \ \ R\in \QQ\dash \Alg.
\end{align}

Fix a good maximal compact subgroup $K\subset H(\Ads)$ such that $H(\Ads) = 
P_H(\Ads)\cdot K$. Let $dk$ be the Haar measure on $K$ that gives it volume 
1, and define
$$ \pi_K: \scrS(V^n(\Ads)) \rar \scrS(V^n(\Ads)),\ \ \ \pi_K(\phi)(x) = 
\int_{K} \phi(kx)dx.$$
The \textit{Ikeda map} is
$$ \Ik: \scrS(V^n(\Ads)) \rar \scrS(V'^n(\Ads)),\ \ \ \Ik(\varphi)(x) = 
\int_{X^n_{\Ads}} \phi\left(\begin{array}{c} x \\ y \\ 0 \end{array}\right)dy,$$
where the coordinates inside the integral correspond to $V^n = X^n \oplus V'^n 
\oplus X^{*n}.$

The regularized Siegel-Weil formula of \cite{Ich04MZ} then states
\begin{align}\label{Ich}
A_{-1}(\phi) = c_K I_{Q'}(g,\Ik\circ \pi_K(\phi)).
\end{align}

Furthermore, by the main results of \cite[Theorem 8.1]{GQT} 
\begin{flalign}
& & A_{-1}(\phi) &= B_{-2}(\phi)& & & &(\text{first-term 
identity})\label{first}\\
& & A_{0}(\phi) &= B_{-1}(\phi) - \kappa' \left\{ 
B_0'(\Ik(\pi_{K_H}(\phi)))\right\}\ (\mod \Im A_{-1}). & & & &
(\text{second-term identity})\label{second}
\end{flalign}

Here $\kappa'$ is an explicit constant, and $B_0'$ is the (possibly 
regularized) theta integral associated to the dual reductive pair $(G,H')$. If $V'$ is anisotropic, the term in $\{\cdots \}$ is to be interpreted as zero. 
We note the second-term identity is modulo the entire image of the map $A_{-1}$ 
from $\scrS(V(\Ads)^n)$ to automorphic forms on $G(\Ads)$.

\subsection{Refined regularization} The second-term identity simplifies when $\phi\in \calS$ satisfies certain conditions, and $m=n+1$, $r=1$. Note these values fall in the second-term range. By (\ref{second}) the value of
$E(g,f_{\varphi}^{(s)})$ at $s=s_0$ is in that case
$$ A_0(\phi) = B_{-1}(\phi)\ (\mod \Im A_{-1}).$$

The specific conditions we impose on $\phi$ are the following:
\begin{enumerate}
	\item $\phi$ is factorizable: $\phi=\otimes_{v} \phi_v$, for $\phi_v\in 
	\scrS(V(\QQ_v)^n)$.
	\item For some place $v_0$, $\Ik_{v_0}\circ \pi_{K_{v_0}} (\phi_{v_0})=0$, where 
	$\Ik_{v_0}$ and 
	$\pi_{K_{v_0}}$ are the local components of $\Ik$ and 
	$\pi_K$.
\end{enumerate}
In particular $(2)$ implies $\Ik\circ \pi_K (\phi)=0$. By (\ref{Ich}) and (\ref{first}) then
$$ B_{-2}(\phi) = A_{-1}(\phi)=0.$$
In other words, $B(\phi)$ has a pole of order at most $1$ at 
$s=s_0$.

Recall the global Siegel principal series representations $I_{n}(s,\chi)$ 
of 
$G_{n}(\Ads)$ and the $H(\Ads)$-invariant function
$$ \Phi_n^{(s)}: \scrS(V(\Ads)^n) \rar I_{n}(s,\chi),\ \ \ 
\Phi_n^{(w)}(g) = 
(\omega_{n}(g)\psi)(0).$$
There's a factorization $\Phi_n^{(s)} = \otimes_v \Phi_{n,v}^{(s)}$ into local components. We put 
\begin{align}\Phi_n = \Phi_n^{(s_0)}.\end{align} Then $\Phi_n$ is $G(\Ads)$-intertwining and 
maps $\psi\in 
\scrS(V(\Ads)^n)$ to $f_{\psi}^{(s_0)}$, the standard Siegel-Weil section 
introduced before.

There are standard global intertwining operators 
$$ M_n(s,\chi): I_n(s,\chi) \rar I_n(-s,\chi),\ \ \ (M_n(s,\chi)f)(g)=\int_{N(\Ads)} f(wng) dn$$
which factor into a tensor product of normalized local intertwining 
operators 
\begin{align}M_n(s,\chi) = \otimes_v M_{n,v}^*(s,\chi).
\end{align}
The normalization we use is the one given by Lapid-Rallis \cite{LapRal}, which is the same as in \cite{GQT}, \cite{GanIchino}. It's known that the derivative 
$M_{n,v}^*{}'(0,\chi)$ 
commutes with $M_{n,v}^*(0,\chi)$ and 
preserves the irreducible submodules of $I_{n,v}(0,\chi)$. The derivative $M'(0,\chi)$ appears in the regularized Siegel-Weil formula via the following construction. 

Choose and fix a factorizable $\phi_1 = \otimes_{v} \phi_{1,v} \in 
\scrS(V(\Ads))$ such that
\begin{align} \phi_1(0)=1,\ \ \ \pi_K (\phi_{1}) = \phi_1.\end{align}
For $\phi\in \scrS(V(\Ads)^n)$, consider $\phi_1\otimes \phi$ as an element of $\scrS(V(\Ads)^{n+1})$. Then there exists $\phi_M \in \scrS(V(\Ads)^{n+1})$ such that
\begin{align} \label{phiM} \Phi_{n+1} (\phi_M) = M_{n+1}'(0,\chi) 
\Phi_{n+1}(\phi_1 \otimes \phi).\end{align}
Let $\phi' \in \scrS(V(\Ads)^n)$ be the restriction of $\phi_M$ to $V(\Ads)^n$ 
via 
$$\phi'(x) = \phi_M(0,x).$$

\begin{prop}\label{refined}For $\phi\in \scrS(V(\Ads)^n)$ satisfying $(1),(2)$, and $\phi'$ as above,we have
	$$ A_0(\phi) - \frac{1}{2} A_{-1}(\phi')= B_{-1}(\phi).$$
\end{prop}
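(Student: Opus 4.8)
The plan is to derive the identity from the second-term identity (\ref{second}) of Gan--Qiu--Takeda by making the ambiguity modulo $\Im A_{-1}$ explicit. The first point is that under hypothesis $(2)$ we have $\Ik\circ\pi_{K_H}(\phi)=\Ik\circ\pi_K(\phi)=0$, so that $B_0'(\Ik(\pi_{K_H}(\phi)))$ is the (regularized) theta integral of the zero Schwartz function and hence vanishes; alternatively, since $r=1$ forces $\dim V''=2(m-n)=2$ and $V'=V_\an$ anisotropic, the bracketed term in (\ref{second}) is zero by convention. Thus (\ref{second}) collapses to $A_0(\phi)\equiv B_{-1}(\phi)\pmod{\Im A_{-1}}$. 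Moreover, by (\ref{Ich}) and the first-term identity (\ref{first}), hypothesis $(2)$ also gives $A_{-1}(\phi)=B_{-2}(\phi)=0$, so that $B(s,\phi)$ has at most a simple pole at $s=\rho_H$. It therefore remains to show that the well-defined element $A_0(\phi)-B_{-1}(\phi)$ of $\Im A_{-1}$ equals precisely $\tfrac12 A_{-1}(\phi')$.

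For this I would reopen the proof of the second-term identity, which relates the pair $(G_n,H)$ to $(G_{n+1},H)$ through exactly the construction in (\ref{phiM}). One forms $\phi_1\otimes\phi\in\scrS(V(\Ads)^{n+1})$ and studies the degenerate Siegel Eisenstein series $E_{n+1}(g,s;\Phi_{n+1}(\phi_1\otimes\phi))$ on $G_{n+1}$ near the centre $s=0$ of its functional equation — note that $s_0=(m-(n+1))/2=0$ for $G_{n+1}$ since $m=n+1$. The two ingredients are: (a) the regularized Siegel--Weil formula (first-term identity) for the pair $(G_{n+1},H)$, which identifies a Laurent coefficient at $s=0$ of $E_{n+1}(g,s;\Phi_{n+1}(\phi_1\otimes\phi))$ with the corresponding coefficient of the regularized theta integral for $(G_{n+1},H)$; and (b) the global functional equation $E_{n+1}(g,s;\Phi)=E_{n+1}(g,-s;M_{n+1}(s,\chi)\Phi)$. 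Since in the Lapid--Rallis normalization $M_{n+1}(0,\chi)$ acts by a sign on the line through $\Phi_{n+1}(\phi_1\otimes\phi)$, Taylor-expanding (b) at $s=0$ forces the $s$-linear coefficient of $E_{n+1}(g,s;\Phi_{n+1}(\phi_1\otimes\phi))$ to equal $\tfrac12\,E_{n+1}(g,s;f_{\phi_M}^{(s)})\big|_{s=0}$, with $\phi_M$ as in (\ref{phiM}); this is the origin of the factor $\tfrac12$.

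Next I would compute the constant term along the maximal parabolic of $G_{n+1}$ with Levi isomorphic to $\GL_1\times G_n$ of the Eisenstein series $E_{n+1}(g,s;f^{(s)})$ attached to a Siegel--Weil section. This constant term is a sum of two terms indexed by the relevant Weyl elements, and after the affine shift relating the $G_{n+1}$-parameter $s$ near $0$ to the $G_n$-parameter near $s_0=(m-n)/2=\tfrac12$, the non-trivial one reproduces the $G_n$-family $E_n(g_0,s;f_\psi^{(s)})$ with $\psi\in\scrS(V(\Ads)^n)$ the restriction of the inducing Schwartz function to the first $n$ slots: for $\Phi_{n+1}(\phi_1\otimes\phi)$ this restriction is $\phi$ (as $\phi_1(0)=1$), contributing $A_0(\phi)$ from the constant coefficient of $E_n(g_0,s;f_\phi^{(s)})$ at $s_0$; while the $M_{n+1}'(0,\chi)$-term of step (b) contributes $\tfrac12 A_{-1}(\phi')$, since by construction the corresponding restriction is $\phi_M(0,\cdot)=\phi'$ and $E_n(g_0,s;f_{\phi'}^{(s)})$ has residue $A_{-1}(\phi')$ at $s_0$. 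On the theta side the analogous constant-term computation — using the compatibility of theta kernels under $G_n\hra G_{n+1}$, the regularizing operator $\omega(z)$, and the auxiliary Eisenstein series $E_H(s,h)$ — identifies the matching coefficient with $B_{-1}(\phi)$, the vanishing ``$\{\cdots\}$''-term having already been discarded. Matching the two sides via the Siegel--Weil formula for $(G_{n+1},H)$ and using $A_{-1}(\phi)=0$ to remove the residual $\Im A_{-1}$ ambiguity then yields $A_0(\phi)-\tfrac12 A_{-1}(\phi')=B_{-1}(\phi)$.

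The step I expect to be the main obstacle is the constant-term bookkeeping on $G_{n+1}$: tracking all normalizing constants (the Lapid--Rallis normalization of $M_{n+1}(s,\chi)$, the residue constants $\kappa,\kappa',c_K$, and the affine shift of parameter), verifying that the derivative term collapses to \emph{exactly} $\tfrac12 A_{-1}(\phi')$ with no leftover element of $\Im A_{-1}$, and checking that the regularization data $z_G,z_H$ behave coherently under $G_n\hra G_{n+1}$. Hypotheses $(1)$ and $(2)$ are precisely what make the sharpening possible: $(1)$ ensures all sections and intertwining operators factor through their local components, and $(2)$ simultaneously removes the bracketed error term in (\ref{second}) and the order-two pole of $B(s,\phi)$, so that every Laurent coefficient being compared lives in the expected space.
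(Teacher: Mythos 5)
Your proposal is correct and follows essentially the same route as the paper: both derive the result by going up to $G_{n+1}$ in the boundary range, invoking the Siegel--Weil identity there for $\phi_1\otimes\phi$, descending via constant terms along the parabolic with Levi $\GL_1\times G_n$ (which is where the functional equation, the operator $M_{n+1}'(0,\chi)$, and hence the $\tfrac12 A_{-1}(\phi')$ term enter), and then killing the leftover error terms of the precise second-term identity using $\Ik\circ\pi_K(\phi)=0$. Your write-up in fact spells out more of the Gan--Qiu--Takeda bookkeeping than the paper, which simply cites ``following through their computations.''
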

\begin{proof}

Since $n-m=1$, the pair of spaces $(W_{n},J_n)$, $(V,Q)$ lie in the so-called 
\textit{boundary range}, i.e. $n$ is the smallest value outside Weil's 
convergence. Then $G_{n+1}(\Ads)$, $H(\Ads)$ are inside the classical convergence range, and so 
\begin{align}A_0(\phi_1\otimes \phi) = 2 B_{-1}(\phi_1\otimes \phi)
\end{align}
for $\phi_1\otimes \phi\in \scrS(V(\Ads)^{n+1})$.
The second-term identity (\ref{second}) of \cite{GQT} is obtained for the boundary case by computing the constant terms of both sides of the above with respect to a parabolic 
subgroup of $G_{n+1}$ with Levi component $\GL_1\times G_n$, then 
taking 
residues 
at $s=0$. In fact following through their computations we see the precise identity is 
\begin{align}\label{raw} A_0(\phi) - \frac{1}{2} A_{-1}(\phi') = 
B_{-1}(\phi) - 
C_r' \cdot B_0(\Ik\circ \pi_{K_H} (\phi)) + C_r \cdot B_{-1}(\Ik\circ 
\pi_{K_H} 
(\phi))\end{align}
for some explicit constants $C_r$, $C'_r$.

The statement then follows from the assumption $\Ik\circ \pi_K(\phi)=0$.
\end{proof}

In the next section, we will show that under the same assumptions the (unregularized) theta integral $I(g;\phi)$ in fact converges, and is equal to $B_{-1}(\phi)$.
\subsection{The mixed model}

For the moment let us again allow any general pair $(V,Q)$, $(W,J)$, where $(V,Q)$ has Witt index $r$. Recall the decomposition $V = X \oplus V_\an \oplus X^*$ from (\ref{Van}) and 
$W = Y \oplus Y^*$. The model of the Weil representation of $\Sp(\WW)(\Ads)$ 
described before acts on $\calS = \scrS(Y\otimes_E V(\Ads))$. There 
is a \textit{mixed model} of the same representation where the space acted on is $\wh{\calS} = 
\scrS((Y\otimes_E V_\an + W\otimes_E X^*)(\Ads))$. We have
$$ Y\otimes_E V  = Y\otimes_E X + Y\otimes_E V_\an + Y\otimes_E X^*$$
and 
$$Y\otimes_E V_\an + W \otimes_E X^* = Y\otimes_E V_\an + Y\otimes_E X^* + 
(Y\otimes_E X)^*$$
where $Y^*\otimes_E X^*$ has been identified with $(Y\otimes_E X)^*$. In these 
coordinates the intertwining map $\calS \rar \wh{\calS},\ \phi \mapsto 
\wh{\phi}$ is 
given by the partial Fourier 
transform
\begin{align}\label{partFou} \wh\phi (v_0, u', v') = \int\limits_{(Y\otimes_E 
	X)(\Ads)} 
\phi\left(\begin{array}{c}u \\ v_0 \\ u'\end{array}\right) \psi_F (v'(u)) 
du.\end{align}
Again for convenience we make the identifications 
$$Y\otimes_E V_\an = V_\an^n,\ \ \ W\otimes_E X^* = W^r$$ 
so that $\wh\calS = \scrS(V_\an(\Ads)^n \oplus W(\Ads)^r)$. Then the theta kernel associated to $\phi\in \calS$ may be written as
\begin{align}\label{thetamixed} \Theta(g,h;\phi) = \sum_{v_0\in V_\an(F)^n,\ w\in W(F)^r} 
\omega(g,h)\wh{\phi}(v_0,w).\end{align}

Let $\omega(z)$ be the operator on $\calS$ corresponding to the regularizing 
elements $z_G$ and $z_H$. For the moment we identify $W(F)^r$ with the matrix 
group $M_{n,r}(F)$. Two essential facts that enable the regularization procedure are that $\omega(z)\phi(v_0,w)=0$ if $\rank(w)<r$, and 
that for all $\phi\in \calS$, the function
$$ \Theta'(g,h,\phi)=\sum_{\tiny \begin{array}{c} v_0\in V_\an(F)^n,\ 
	w\in 
	M_{n,r}(F)\\ 
	\rank(w)=r\end{array}} 
\omega(g,h)\wh{\phi}(v_0,w)$$
is rapidly decreasing on $H(F)\backslash H(\Ads)$.

\begin{prop}\label{rapid}Assume $r=1$. If $\phi\in \scrS(V(\Ads)^n)$ is 
	$K_H$-invariant and satisfies $\Ik(\phi)=0$, then $h \mapsto \Theta(g,h;\phi)$ 
	is rapidly decreasing on $H(F)\backslash H(\Ads)$.
\end{prop}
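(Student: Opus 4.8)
The plan is to reduce the rapid-decay statement to the known rapid decay of $\Theta'(g,h;\phi)$ by showing that the "defect" $\Theta(g,h;\phi) - \Theta'(g,h;\phi)$ — i.e., the part of the theta kernel in the mixed model (\ref{thetamixed}) coming from $w$ of rank $<r=1$, which here means $w=0$ — vanishes identically under the hypotheses $\phi$ is $K_H$-invariant and $\Ik(\phi)=0$. First I would write out, using the mixed-model expansion (\ref{thetamixed}) with $r=1$,
\begin{align*}
\Theta(g,h;\phi) = \sum_{v_0\in V_\an(F)^n,\ w\in W(F)} \omega(g,h)\wh\phi(v_0,w) = \Theta'(g,h;\phi) + \sum_{v_0\in V_\an(F)^n} \omega(g,h)\wh\phi(v_0,0),
\end{align*}
since the only $w\in W(F)\simeq F^n$ of rank $<1$ is $w=0$. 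So it suffices to prove that the "rank-zero" theta series $\Theta_0(g,h;\phi) := \sum_{v_0\in V_\an(F)^n}\omega(g,h)\wh\phi(v_0,0)$ vanishes for all $g,h$.

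The key computation is to recognize $\Theta_0$ as a theta series attached to the smaller pair $(G,H_\an)$ where $H_\an = \Res_{F/\QQ}U(V_\an,Q_\an)$, applied to the Schwartz function on $V_\an(\Ads)^n$ obtained from $\wh\phi(-,0)$. Unwinding the partial Fourier transform (\ref{partFou}): setting $v'=0$ in $\wh\phi(v_0,u',v')$ and then integrating over the $X^*$-variable $u'$ recovers precisely the Ikeda integral $\Ik(\phi)$, because with $r=1$ we have $r_0 = m-n = 1$ and the Ikeda map $\Ik(\phi)(v_0) = \int_{X^n_\Ads}\phi\binom{x}{v_0\ \ 0}dx$ integrates out exactly the $X^n$-slot that the inverse partial Fourier transform at $v'=0$ also integrates out. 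More precisely, $\wh\phi(v_0,0)$ as a function on $V_\an(\Ads)^n\oplus (Y\otimes_E X^*)(\Ads)$, when one further sums the theta kernel over the lattice points in the $X^*$-direction, collapses (by Poisson summation in that variable) to the Ikeda-integrated Schwartz function, and the hypothesis $\Ik(\phi)=0$ forces the whole sum to vanish. So the heart of the argument is a careful bookkeeping identity: $\Theta_0(g,h;\phi)$, as a function of $h$ summed appropriately, equals a theta integral built from $\Ik(\phi)$, which is $0$ by hypothesis; hence $\Theta_0\equiv 0$, and therefore $\Theta = \Theta'$ is rapidly decreasing on $H(F)\backslash H(\Ads)$ by the cited fact.

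I should also use the $K_H$-invariance of $\phi$: it guarantees that the $w=0$ contribution is genuinely governed by $\Ik(\phi)$ rather than $\Ik(\pi_{K_H}\phi)$, and more importantly it is needed so that the intermediate Schwartz functions appearing after partial Fourier transform transform correctly under the Siegel parabolic $P_H$ of $H$ acting on the $X,X^*$ coordinates — without it the "rank-zero" piece need not be expressible purely in terms of $H_\an$ and the Ikeda map. The main obstacle I anticipate is precisely this identification: carefully matching the partial Fourier transform (\ref{partFou}) defining $\wh\phi$ with the Ikeda map $\Ik$ of the previous section, keeping track of which isotropic slots ($Y\otimes_E X$ versus $X^n$ versus $X^{*n}$) are being integrated, and confirming via Poisson summation along the $X^*$-direction that the $w=0$ sum really does reduce to $\Ik(\phi)$ evaluated at $0$ in the relevant variable. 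Once that matching is pinned down, the vanishing is immediate and the proposition follows.
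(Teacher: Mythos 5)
Your overall strategy is the paper's: split the mixed-model expansion (\ref{thetamixed}) into the rank-one part $\Theta'$ and the $w=0$ part, and kill the latter using $\Ik(\phi)=0$. But as written the argument has a genuine gap at the key step. What must be shown is that \emph{each term} $\omega(g,h)\wh\phi(v_0,0)$ vanishes for \emph{every} $g\in G(\Ads)$, $h\in H(\Ads)$, $v_0\in V_\an(F)^n$. The hypothesis $\Ik(\phi)=0$ only hands you the case $g=h=1$, since $\wh\phi(v_0,0)=\Ik(\phi)(v_0)$; your route to the general case --- that $\Theta_0$ ``summed appropriately equals a theta integral built from $\Ik(\phi)$, which is $0$, hence $\Theta_0\equiv 0$'' --- is not a valid deduction: the vanishing of an integral of $\Theta_0$ does not give the pointwise vanishing you need, and pointwise vanishing of every term is exactly what lets you conclude $\Theta=\Theta'$. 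The missing computation is the one place the hypotheses actually get used: write $h=pk$ with $k\in K_H$ (Iwasawa decomposition $H(\Ads)=P_H(\Ads)K_H$), discard $k$ by the $K_H$-invariance of $\phi$, and check explicitly that $p^{-1}\in P_H(\Ads)$ preserves the condition ``$X^*$-component $=0$'' and acts on the $V_\an$-slot through its Levi component $h_0$, so that $\omega(h)\wh\phi(v_0,0)=\Ik(\phi)(h_0v_0)=0$; then handle the $g$-dependence by the fact that $\Ik$ intertwines the $G(\Ads)$-action, giving $(\omega(g)\wh\phi)(v_0,0)=\omega(g)\Ik(\phi)(v_0)=0$. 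Your remark that $K_H$-invariance is needed so things ``transform correctly under $P_H$'' gestures at this, but the proposition stands or falls on actually carrying it out.

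A secondary confusion: no Poisson summation ``along the $X^*$-direction'' is needed, and invoking it muddies the identification. Setting $u'=v'=0$ in (\ref{partFou}) makes the character $\psi_F(v'(u))$ trivial, and the remaining integral over $(Y\otimes_E X)(\Ads)$ is literally the Ikeda integral, so $\wh\phi(v_0,0)=\Ik(\phi)(v_0)$ by direct substitution. The Poisson summation you have in mind is the one already built into the equality (\ref{thetamixed}) of the two models of the theta kernel; it plays no further role here.
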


\begin{proof}
	
	We show that in the expansion (\ref{thetamixed}) all the terms with $\rank(w)<r$ vanish. 
	From (\ref{partFou}), it's easy to see that 
	$$ \Ik(\phi)(v_0) = \wh{\phi}(v_0,0).$$
	
	On the other hand if $r=1$, the condition $\rank(w)=r$ is simply $w\neq 0$, so that
	$\wh{\phi}(v_0,0) = 0$ for all $v_0\in V_{an}(F)^n$. Now for $h\in H(\Ads)$ write 
	$h=pk$ with $k\in K_H$ and 
	$$ p^{-1} = \left(\begin{array}{ccc} a & r & s \\ & h_0 & t \\ & & {}^t 
	\ol{a}^{-1} 
	\end{array}\right)\in P_H(\Ads).$$
	Since $\phi$ is $K_H$-invariant, 
	$$ \omega(h) \phi \left(\begin{array}{c} u\\ v_0 \\ 0\end{array}\right) = 
	\omega(p)\phi\left(\begin{array}{c} u\\ v_0 \\ 0\end{array}\right)  = 
	\phi\left(\begin{array}{c} au + rv_0\\ h_0 v_0 \\ 0\end{array}\right)$$
	so that $\omega(h)\wh{\phi}(v_0,0)= \Ik (\phi)(h_0 v_0)=0$ for all $h\in 
	H(\Ads)$. From the fact that the Ikeda map is $G(\Ads)$-intertwining, it also 
	follows that 
	$$(\omega(g)\wh{\phi})(v_0,0)=\wh{\omega(g)\phi}(v_0,0)=\Ik(\omega(g)\phi)(v_0)=\omega(g)\Ik(\phi)(v_0)=0.$$ Then we have $\omega(g,h)\wh{\phi}(v_0,0)=0$ for all $v_0$, so that
	$$ \Theta(g,h;\phi) = \Theta'(g,h;\phi).$$
	Since the right-hand side is rapidly decreasing, the proposition follows.
\end{proof}

Now let us restrict again to the case where $(V,Q)=(V_0,Q_0)\otimes_{E_0} E$, with $(V_0,Q_0)$ having signature $(n,1)$, $n=[F:\QQ]$, and $(W_0,J_0)=(\Res_{E/E_0} W_1, \tr_{E/E_0} J_1)\simeq (W_n,J_n)$. If $\phi\in \calS$ is $K_{H_0}$-invariant, and $\Ik(\phi)=0$, by the proposition the theta integral
$$ I(g;\phi)=\int_{[H_0]} \Theta(g,h;\phi) dh,\ \ \ g\in G_0(\Ads),$$
converges, even though $(V_0,Q_0)$, $(W_0,J_0)$ are outside Weil's convergence range. On the other hand, so does the regularized theta integral
$$I^\reg(g,s;\phi)=\int_{[H_0]} \Theta(g,h;\omega(z)\phi) E_{H_0}(h,s) dh$$
for large enough $\Re(s)$, and it has meromorphic continuation to the $s$-plane.  

\begin{prop}Suppose that $r=1$, $\phi$ is $K_{H_0}$-invariant, and $\Ik(\phi)=0$. For $\rho = \frac{n}{2}$, we have
	$$ \Res\limits_{s=\rho} \frac{1}{P(s)\kappa } I^\reg(g,s,\phi) = I(g;\phi).$$
\end{prop}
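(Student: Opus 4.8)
The plan is to exploit Proposition~\ref{rapid}: under the stated hypotheses the honest theta integral $I(g;\phi)$ already converges, so the regularization collapses to an integration by parts, and a single residue computation then yields the identity.

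I would first record the two structural facts underlying the regularization of \cite{KRa88} (see also \cite{GQT}). Write $R$ for the right regular action in the $h$-variable. Since $z_G$ is central in $U(\Lie G(\RR))_\CC$, the operator $\omega(z_G)$ commutes with the actions of both $G(\Ads)$ and $H_0(\Ads)$ in the Weil representation; combined with $\omega(z_G)=\omega(z_{H_0})$ on $\calS$ (Howe duality) this gives
$$\Theta(g,h;\omega(z)\phi)=\bigl(R(z_{H_0})\,\Theta(g,\,\cdot\,;\phi)\bigr)(h),$$
and moreover $z_{H_0}$ acts on the auxiliary Eisenstein series by the scalar $P(s)$. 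Now, because $\phi$ is $K_{H_0}$-invariant and $\Ik(\phi)=0$, Proposition~\ref{rapid} shows that $h\mapsto\Theta(g,h;\phi)$ is rapidly decreasing on $[H_0]$. Hence $I(g;\phi)=\int_{[H_0]}\Theta(g,h;\phi)\,dh$ converges absolutely, and so does $\int_{[H_0]}\Theta(g,h;\phi)\,E_{H_0}(h,s)\,dh$ for every $s$ at which $E_{H_0}(h,s)$ is finite (a rapidly decreasing function against one of moderate growth over the finite-volume space $[H_0]$), with meromorphic continuation in $s$.

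Next I would integrate by parts on $[H_0]$ against the invariant measure. Since $\Theta(g,h;\phi)$ and all its derivatives are rapidly decreasing while $E_{H_0}(h,s)$ and all its derivatives are of moderate growth, there are no boundary terms, and the regularizing operator is transferred onto $E_{H_0}(h,s)$, on which it acts by the scalar $P(s)$ --- this is precisely the mechanism built into the definition of $B(s,\phi)$ in \cite{KRa88} (up to replacing $z_{H_0}$ by its image under the principal anti-involution, which one may take to fix $z_{H_0}$, and which at worst renormalizes $P$). Thus, as meromorphic functions of $s$,
\begin{align*}
I^\reg(g,s;\phi) &= \int_{[H_0]}\bigl(R(z_{H_0})\,\Theta(g,\,\cdot\,;\phi)\bigr)(h)\,E_{H_0}(h,s)\,dh \\
&= P(s)\int_{[H_0]}\Theta(g,h;\phi)\,E_{H_0}(h,s)\,dh,
\end{align*}
and dividing by $\kappa\,P(s)$,
$$\frac{1}{\kappa\,P(s)}\,I^\reg(g,s;\phi)=\frac{1}{\kappa}\int_{[H_0]}\Theta(g,h;\phi)\,E_{H_0}(h,s)\,dh.$$
The factor $P(s)$, which has a simple zero at $s=\rho$, has cancelled; the right-hand side therefore has at most a simple pole at $s=\rho$, coming only from $E_{H_0}$.

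Finally I would take the residue at $s=\rho$. As $\Theta(g,h;\phi)$ is rapidly decreasing uniformly in $s$, and $E_{H_0}(h,s)$ minus its polar part is locally uniformly of moderate growth near $\rho$, dominated convergence lets the residue pass inside the integral:
\begin{align*}
\Res_{s=\rho}\frac{1}{\kappa\,P(s)}\,I^\reg(g,s;\phi) &= \frac{1}{\kappa}\int_{[H_0]}\Theta(g,h;\phi)\,\Bigl(\Res_{s=\rho}E_{H_0}(h,s)\Bigr)\,dh \\
&= \frac{1}{\kappa}\int_{[H_0]}\Theta(g,h;\phi)\cdot\kappa\,dh \;=\; I(g;\phi),
\end{align*}
using that $\Res_{s=\rho}E_{H_0}(h,s)=\kappa$ is the constant function. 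The main obstacle will be making the integration-by-parts step rigorous as an identity of meromorphic functions --- securing a common region of absolute convergence, the absence of boundary terms, and the correct bookkeeping of the scalar $P(s)$ --- which is exactly where the rapid decrease of $\Theta(g,h;\phi)$ furnished by Proposition~\ref{rapid} (i.e.\ by the hypothesis $\Ik(\phi)=0$) is indispensable; once that is in hand, the remaining analytic interchanges are routine.
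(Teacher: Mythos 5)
Your proposal is correct and follows essentially the same route as the paper: invoke Proposition~\ref{rapid} to get rapid decrease of $h\mapsto\Theta(g,h;\phi)$, integrate by parts to move $z_{H_0}$ (or rather $z_{H_0}^*$) onto $E_{H_0}(h,s)$ where it acts by the scalar $P(s)$, cancel $P(s)$, and pass the residue inside the integral using that $\Res_{s=\rho}E_{H_0}(h,s)=\kappa$ is constant. The extra care you take with convergence and the interchange of residue and integral is a welcome elaboration of steps the paper leaves implicit, but it is not a different argument.
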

\begin{proof}By Proposition (\ref{rapid}) $\Theta(g,h;\phi)$ is rapidly decreasing on $[H_0]$, so we can write
	\begin{align}\int_{[H_0]} \Theta(g,h;\omega(z)\phi) E_{H_0}(h,s) dh &= \int_{[H_0]} \Theta(g,h;\phi) (z_{H_0}^*\cdot E_{H_0}(h,s)) dh\\
	& = P(s) \int_{[H_0]} \Theta(g,h;\phi) E_{H_0}(h,s) dh.
	\end{align}
	Then 
	\begin{align*} \Res\limits_{s=\rho} \frac{1}{P(s)\kappa } I^\reg(g,s,\phi) &= \Res_{s=\rho} \frac{1}{\kappa} \int_{[H_0]} \Theta(g,h;\phi) E_{H_0}(h,s) dh\\
	&= \frac{1}{\kappa} \int_{[H_0]} \Theta(g,h;\phi) \left(\Res_{s=\rho} E_{H_0}(h,s) \right) dh\\ 
	&= \int_{[H_0]} \Theta(g,h;\phi) dh.
	\end{align*}
\end{proof}
\begin{cor}Suppose $\phi$ satisfies the assumptions of Proposition \ref{refined}. Then
	$$ I(g;\phi)=E(g;f_{\phi}^{(s_0)}) - \frac{1}{2} \Res\limits_{s=s_0} E'(g,f_{\phi'}^{(s)}).$$
\end{cor}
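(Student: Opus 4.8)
The plan is to combine the two results immediately preceding this corollary with Proposition~\ref{refined}, and then read the answer off the Laurent expansion (\ref{Esphi}). Since $\phi$ satisfies conditions $(1)$ and $(2)$, the discussion preceding Proposition~\ref{refined} already records that $B_{-2}(\phi)=A_{-1}(\phi)=0$, via the first-term identities (\ref{Ich}) and (\ref{first}) together with $\Ik\circ\pi_K(\phi)=0$. Hence in (\ref{LaurentB}) the regularized integral $B(g,s;\phi)=\tfrac{1}{\kappa P(s)}I^{\reg}(g,s;\phi)$ has at most a simple pole at $s=\rho_H=\tfrac n2$, so that $B_{-1}(\phi)(g)=\Res_{s=\rho_H}\tfrac{1}{\kappa P(s)}I^{\reg}(g,s;\phi)$. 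Assuming in addition (as holds for $\phi=\varphi_\KM$, and as the rapid-decay input of the preceding proposition requires) that $\phi$ is $K_{H_0}$-invariant, condition $(2)$ yields $\Ik(\phi)=0$, so the immediately preceding Proposition applies and identifies that residue with $I(g;\phi)$. Thus $I(g;\phi)=B_{-1}(\phi)(g)$.

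It then remains to translate the two sides of Proposition~\ref{refined}, namely $B_{-1}(\phi)=A_0(\phi)-\tfrac12 A_{-1}(\phi')$, into Eisenstein-series data. Because $A_{-1}(\phi)=0$, the expansion (\ref{Esphi}) of $E(g,f_\phi^{(s)})$ at $s=s_0$ has no polar part; hence $E(g,f_\phi^{(s)})$ is holomorphic at $s_0$ with value $E(g;f_\phi^{(s_0)})=A_0(\phi)$. On the other hand $f_{\phi'}^{(s)}$ is the standard Siegel--Weil section (\ref{stdSW}) attached to $\phi'\in\scrS(V(\Ads)^n)$, so by the pole estimate recalled just before (\ref{Esphi}) its Eisenstein series has at most a simple pole at $s_0$, and by (\ref{Esphi}) applied to $\phi'$ its residue there equals $A_{-1}(\phi')$; this residue is the quantity $\Res_{s=s_0}E'(g,f_{\phi'}^{(s)})$ appearing in the statement. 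Substituting these two identifications into $I(g;\phi)=A_0(\phi)-\tfrac12 A_{-1}(\phi')$ gives the asserted formula.

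I do not expect a genuine obstacle here: the corollary is a formal consequence of Proposition~\ref{refined} and the proposition just before it, and the points that need attention are purely organizational -- keeping the two spectral variables apart ($B(g,s;\phi)$ has its pole at $\rho_H=\tfrac n2$, while $E(g,f_\bullet^{(s)})$ has its special point at $s_0$), using $B_{-2}(\phi)=0$ and the standard pole bound for Siegel--Weil sections to guarantee that both $B(g,s;\phi)$ and $E(g,f_{\phi'}^{(s)})$ have at most simple poles at those points so that ``residue'' is meant literally, and carrying the normalizations $\kappa$ and $P(s)$ through the first step unchanged. The one expository wrinkle is that invoking the preceding proposition requires $K_{H_0}$-invariance of $\phi$ in addition to $(1)$ and $(2)$; this is automatic for $\varphi=\varphi_\KM$.
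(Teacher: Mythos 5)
Your argument is correct and is exactly the chain the paper intends (the paper in fact leaves the proof of this corollary empty): use $B_{-2}(\phi)=A_{-1}(\phi)=0$ to reduce the residue of $\tfrac{1}{\kappa P(s)}I^{\reg}$ to $B_{-1}(\phi)$, identify that with $I(g;\phi)$ via the preceding proposition, and then read $A_0(\phi)=E(g;f_\phi^{(s_0)})$ and $A_{-1}(\phi')=\Res_{s=s_0}$ of the Eisenstein series attached to $\phi'$ off the Laurent expansion (\ref{Esphi}). Your flag that invoking the preceding proposition requires $K_{H_0}$-invariance of $\phi$ (so that $\pi_K(\phi)=\phi$ and hence $\Ik(\phi)=0$), which is not literally among the hypotheses of Proposition \ref{refined} but does hold for $\varphi_\KM\otimes\varphi_L$, is a correct and worthwhile observation.
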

\begin{proof}
\end{proof}

Now we apply the above to $\phi=\varphi_\KM \otimes \varphi_L \in \scrS(V_0(\Ads)^n)$, which was used previously in $\S 3$ to prove the modularity of 
$$F(\tau) = \sum_{i=1}^l F_i(\tau) = \sum_{i=1}^l \sum_{b\in F} I(C_i,C_b) e_*(b\tau).$$
By construction, $\phi$ is factorizable. Theorem \ref{Ikedakills} from the appendix will show that $\phi_\KM$ is killed by the Ikeda map. Then the corollary therefore applies to $\phi$.

Let $T =\in \CC\frakH_n$ be an element of the hermitian upper-half plane of degree $n$, so that $T=U+iV$ with $U$, $V\in \Herm_n(\CC)$, and $V$ positive-definite. Write $T = {}^t \ol{A} A$, and put
$$ g_T = \left(\begin{array}{cc} 1 & U \\ & 1 \end{array}\right)\left(\begin{array}{cc} A & \\ & {}^t \ol{A}^{-1} \end{array}\right)\in G_0(\RR),\ \ \ \wt{F}(T) = i^{n}|\det T|^{-m/2} I(g_T;\phi).$$

Then $F(T)$ is a hermitian modular form of degree $n$, and essentially a Siegel Eisenstein series. The map $G(\RR) \hra G_0(\RR)$ then induces an injection $\iota_\RR: \frakH^n \rar \CC\frakH_n$. We have thus proved:

\begin{thm}For $\phi = \varphi_\KM \otimes \phi_L$, 
	$$\wt{F}(T) = E(g_T;f_{\phi}^{(\frac{1}{2})}) - \frac{1}{2} \Res_{s=\frac{1}{2}} E'(g_T,f_{\phi'}^{(s)}).$$
	
	In particular, $F(\tau)$ is the restriction of the right-hand side above to $\frakH^n \subset \CC\frakH_n$.
\end{thm}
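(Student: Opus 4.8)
The plan is to assemble the pieces already in place and show the final theorem is essentially a bookkeeping consequence of Theorem~A, the Corollary preceding it, and the appendix computation $\Ik(\varphi_\KM)=0$. First I would observe that $\phi = \varphi_\KM \otimes \phi_L$ is factorizable by construction, and that Theorem~\ref{Ikedakills} from the appendix gives $\Ik(\varphi_\KM)=0$; since $\phi_L$ is the characteristic function of a lattice and the Ikeda map factors over places, this yields $\Ik \circ \pi_K(\phi) = 0$ with $v_0$ the archimedean place, so conditions $(1)$ and $(2)$ of Proposition~\ref{refined} hold. Hence the Corollary applies and gives
$$ I(g;\phi) = E(g;f_\phi^{(s_0)}) - \tfrac12 \Res_{s=s_0} E'(g,f_{\phi'}^{(s)}),$$
valid for all $g \in G_0(\Ads)$, with $s_0 = \frac{m-n}{2} = \frac12$ since $m = n+1$.

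Next I would translate this into the classical picture. The definition $\wt F(T) = i^n |\det T|^{-m/2} I(g_T;\phi)$ is exactly the normalization used in the earlier modularity theorem (the one computing $F(\tau) = i^n|\Im\tau_i|^{-m/2} I(g_\tau;\varphi_\KM)$), now applied to the bigger group $G_0(\RR) = U(n,n)$ and the hermitian upper half-space $\CC\frakH_n$ via $g_T$. Substituting the Corollary's formula for $I(g_T;\phi)$ into the definition of $\wt F(T)$ and carrying the normalizing factor $i^n|\det T|^{-m/2}$ through both terms on the right gives the first displayed identity of the theorem; one should note that the right-hand side is a well-defined automorphic form on $G_0(\Ads)$ because each Laurent coefficient $A_0(\phi)$, $A_{-1}(\phi')$ of the relevant Siegel Eisenstein series is, as recalled in Section~4. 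Its classical avatar on $\CC\frakH_n$ is a hermitian modular form of degree $n$ and weight $\frac{m}{2} = \frac{n+1}{2}$, built from (the value and derivative of) Siegel Eisenstein series on $U(n,n)$.

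For the ``in particular'' clause I would use the compatibility of the two theta integrals established in Section~3: for $g \in G(\Ads) \subset G_0(\Ads)$ one has $\Theta_{\phi_0}(\jmath(g),h_0) = \Theta_\phi(g,\iota(h_0))$, so that $I(\jmath(g);\phi)$ restricted to $G(\Ads)$ agrees with the convergent theta integral on $[H_0]$ appearing in Theorem~A, whose archimedean restriction along $g = g_\tau$ is $F(\tau)$ up to the same normalizing constant. Concretely, under the embedding $\jmath:G \hra G_0$ the point $g_\tau \in G(\RR) = U(1,1)^n$ maps to $g_T$ with $T = \iota_\RR(\tau)$, where $T = U+iV$ has $U,V$ the block-diagonal matrices assembled from the $u_i,v_i$ via the identification $(W_n,J_n) \simeq (W_0,J_0)$ over $E_0$ fixed in Section~1. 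Tracking the determinant factors, $|\det T|$ restricts to $\prod_i |v_i|$ and the weight $\frac{m}{2}$ is preserved, so $\wt F(\iota_\RR(\tau)) = F(\tau)$; combining with the displayed identity finishes the proof.

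The main obstacle I anticipate is not conceptual but one of careful normalization: matching the constant $c_0$ and the prefactor $i^{-n}\prod_i|v_i|^{(n+1)/4}$ in the definition of $F(\tau)$ from the introduction against $i^n|\det T|^{-m/2}$ and the constants $c_K$, $\kappa'$ appearing in the regularized Siegel--Weil formulas of Ichino and Gan--Qiu--Takeda, and checking that the embedding $\iota_\RR:\frakH^n \hra \CC\frakH_n$ really is the one induced by $\jmath$ sending $\tau_i$ to the $i$-th diagonal block. Verifying that the measure-normalization subtleties (the self-dual Haar measure computation of Section~3, the choice of $\phi_1$ with $\phi_1(0)=1$ and $\pi_K(\phi_1)=\phi_1$) do not introduce stray factors is where the real work lies; everything else is a direct substitution into results already proved in the excerpt.
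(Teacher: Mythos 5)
Your proposal is correct and follows essentially the same route as the paper: verify that $\phi=\varphi_\KM\otimes\varphi_L$ is factorizable and killed at the archimedean place by $\Ik\circ\pi_K$ (via Theorem \ref{Ikedakills} and the $K_0(\RR)$-invariance of $\varphi_\KM$), apply the Corollary to get the Eisenstein-series expression for $I(g;\phi)$, and use the seesaw compatibility of theta kernels together with the computation of Theorem A to identify the restriction along $\jmath$ with $F(\tau)$. The paper gives no additional detail beyond this assembly, and the normalization caveats you flag (the discriminant factors in $\det T$ versus $\prod_i v_i$, the constant $c_0$) are indeed left implicit there as well.
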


\subsection*{Remarks}
The same argument as in the computation of $I_b(g,\phi)$, identifies the \textit{non-degenerate} Fourier coefficients $I(g,\phi)$, taken with respect to $P_0(\Ads)\subset G_0(\Ads)$, with intersection numbers $I(C_\beta,C_0)$, i.e. those indexed by $\beta\in \Herm_n(E_0)$ with $\det(\beta)\neq 0$. The restriction map to $\frakH^n$ then groups together all such coefficients $\beta$ with ${}^t \rho \beta \rho = b$. 

It's possible, but we have not been able to show, that the term $\Res_{s=s_0} E'(g,f_{\phi'}^{(s)})$ occurring in the refined regularization vanishes. This would be the case for instance, if the operation $\phi \mapsto \phi'$ preserves the kernel of the Ikeda map. In that case the refined formula would say that the Siegel-Weil formula in fact holds outside the convergence range for particular $\phi$, such as $\varphi_\KM\otimes \varphi_L$.

The ``generalized'' intersection volumes may have geometric interpretations in terms of the spectacle cycles of Funke and Millson \cite{FunkMill11}.

\section{Appendix}

\subsection{The Kudla-Millson construction}

First we recall the definition of the Kudla-Millson Schwartz function 
$\varphi_{q,q}^+$, following \cite{KM86, 
	KM87, KM90IHES} closely. We define an adelic Schwartz function 
$\varphi_{\mathrm{KM}}\in \scrS(V(\Ads)^n)$ using $\varphi_{q,q}^+$, and we 
show it is annihilated by a regularizing Ikeda map 
$$\Ik\pi^{Q}_{Q'}: \scrS(V(\Ads)^n) \rar \scrS(V'(\Ads))^n,$$
for some complementary subspace $V'$ of $V$. That will imply the corresponding 
Eisenstein series $E(s,\varphi_{\mathrm{KM}})$ has a simple pole at 
$\rho_H=\frac{m-r}{2}$.

Let $(U,(\ , \ ))$ be a non-degenerate complex hermitian space of dimension $m$ 
and signature 
$(p,q)$, with 
$pq\neq 0$ and $p<q$, to be thought of as one of the archimedean completions of 
$(V,Q)$ 
from the introduction. Put
\begin{align}G_U=\SU(U,\ (\ ,\ ))
\end{align}

Let $D_0$ be the set of negative-definite $q$-dimensional subspaces of $U$,
$Z_0\in D_0$ the span of $u_{p+1},\cdots, u_m$.  It may be identified with the 
hermitian symmetric domain associated to $G_U$ as follows.

Fix a basis $u_1,\cdots, u_m$ with dual $z_1,\cdots, 
z_m$ with respect to which 
$(\ ,\ )$ has the standard form
$$ (u,u)=\sum_{i=1}^p |z_i|^2 - \sum_{j=p+1}^m |z_j|^2,\ \ \ u \in U.$$
Let $Z_0 = \Span\{u_{p+1},\cdots, u_m\}\in D_0$. Then $D_0\simeq G_U/K$, 
where $K$ is the maximal compact subgroup stabilizing 
$Z_0$.

Let $\frakk = \Lie(K)_\CC$ and $\frakg  = \Lie(G_U)_\CC$ be the complexified 
Lie algebras, and $\frakg = \frakk + 
\frakp$ be the orthogonal decomposition with respect to the Killing 
form on $\frakg$. Then $\frakp$ can be identified with the complexified tangent 
space $T_{Z_0}(D_0)$, and its complex dual $\frakp^*$ with left $G_U$-invariant 
1-forms $\Omega^1(D_0)$ on $D_0$. The invariant $k$-forms $\Omega^k(D_0)$ are 
then identified with $\bigwedge^k \frakp^*$.

Write $U=U^+\oplus U^-$, where $U^+=\Span\{u_1,\cdots, u_p\}$ and 
$U^-=Z_0$. Using the basis $v_1,\cdots, v_m$ to 
identify $\frakg \frakl(U)$ with $M_m(\CC)$, an explicit basis for 
$\frakg\subset \frakg \frakl (U)	$ is 
given by
$$\{X_{jk}: j<k\} \cup \{Y_{jk}: j\leq k\},$$
where
\begin{align}
X_{jk} = -E_{jk}+E_{kj},\ \ \ Y_{jk} = -i(E_{jk}+E_{kj}).
\end{align}

Let $\{X_{jk}^*$, $Y_{jk}^*\}$ denote the dual basis for $\frakg^*$. Then a 
basis for $\frakp^*$ is
$$\{ X_{jk}^*,\ Y_{jk}^* : 1\leq j \leq p,\ p+1\leq k \leq m\}.$$

Put
\begin{align} \xi_{jk} = X_{j,k+p}^* + i Y_{j,k+p}^*,\ \ \ 1\leq j \leq p,\ \ \ 
1 \leq k \leq q.\end{align}
Under the identification $\frakp^* \simeq \Omega^1(D)$, $\{\xi_{jk}\}$ is 
a basis for the subspace $(\frakp^*)^+\simeq \Omega^{(1,0)}(D)$ of 
$G_U$-invariant $(1,0)$-forms on $D$. Let $A_{jk}$ 
denote left-multiplication by $\xi_{jk}$ in the exterior algebra $\bigwedge 
\frakp^*$. The unitary 
Howe operator $D^+: \bigwedge \frakp^* \otimes \scrS(V) \rar \bigwedge 
\frakp^{*}
\otimes \scrS(V)$
is defined by
\begin{align}
D^+ = \frac{1}{2^{2q}} \left\{\prod_{k=1}^q \sum_{j=1}^p A_{jk}\otimes 
\left(\ol{z}_j - \frac{1}{\pi} \frac{\partial}{\partial z_j}\right)\right\}.
\end{align}

Write
\begin{align} \varphi_0 = \exp\left(-\pi \sum_{i=1}^m |z_i|^2 \right)\end{align}
for the standard Gaussian on $U$. The Kudla-Millson form of type $(q,q)$ is 
\begin{align}\label{phiqq} \varphi_{q,q}^+=D^+\ol{D^+} \varphi_0\in 
\bigwedge
\frakp^* \otimes \scrS(U).\end{align}
To unwind the expression for $\varphi_{q,q}^+$, we employ
multi-index notation. For a set of integers 
\begin{align}\label{alpha} \alpha = (a_1,\cdots, 
a_q),\ \ \ 1 \leq  a_1  \cdots , a_q \leq 
p\end{align} put
\begin{align} z_\alpha = z_{a_1} \cdots z_{a_q}\end{align}
and
\begin{align} B_\alpha = A_{a_1,1}\wedge A_{a_2,2}\wedge \cdots 
\wedge 
A_{a_q,q},\end{align}
which is a $(q,0)$-form in $\bigwedge \frakp^*$. We have a differential 
operator
\begin{align} \label{Di} D_{i} = z_i - 
\frac{1}{\pi}\frac{\partial}{\partial 
	\ol{z}_i},\end{align}
and its multi-index iteration
\begin{align}\label{multiD}
D_{\alpha} = D_{a_1} D_{a_2} \cdots D_{a_r}.
\end{align}

\begin{lemma} 
	$$\varphi_{q,q}^+ = 2^{-4q} \sum_{\alpha,\alpha'} B_\alpha \wedge 
	\ol{B_{\alpha'}}\otimes D_\alpha \ol{D_{\alpha'}} \varphi_0,$$
	where $\alpha$, $\alpha'$ range over all possible $q$-tuples satisfying 
	(\ref{alpha}).
\end{lemma}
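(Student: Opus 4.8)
The plan is a direct expansion of the definitions, with no idea involved beyond bookkeeping. First I would substitute the displayed definition of $D^+$ into $\varphi_{q,q}^+ = D^+\ol{D^+}\varphi_0$ of (\ref{phiqq}), noting that $\ol{D^+}$ is obtained by complex-conjugating every factor; thus $D^+ = 2^{-2q}\prod_{k=1}^q\sum_{j=1}^p A_{jk}\otimes \ol{D_j}$ and $\ol{D^+} = 2^{-2q}\prod_{k=1}^q\sum_{j=1}^p \ol{A_{jk}}\otimes D_j$, where $D_j = z_j - \frac1\pi\partial_{\ol z_j}$ is the operator of (\ref{Di}) and $\ol{D_j}$ its conjugate. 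Both $D^+$ and $\ol{D^+}$ are then finite sums of operators of pure-tensor form $A\otimes T$, where $A$ acts on $\bigwedge\frakp^*$ by left exterior multiplication by a $1$-form and $T$ is a polynomial-coefficient differential operator on $\scrS(U)$.

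Next I would multiply the two products out. The essential observation is that operators $A\otimes T$ act on $\bigwedge\frakp^*\otimes\scrS(U)$ one tensor slot at a time, so $(A\otimes T)(A'\otimes T') = AA'\otimes TT'$ with no sign; expanding $\prod_{k=1}^q$ in each of $D^+$ and $\ol{D^+}$ therefore gives
$$\varphi_{q,q}^+ \;=\; 2^{-4q}\sum_{j_1,\dots,j_q}\;\sum_{i_1,\dots,i_q}\bigl(A_{j_1,1}\cdots A_{j_q,q}\,\ol{A_{i_1,1}}\cdots\ol{A_{i_q,q}}\bigr)\otimes\bigl(\ol{D_{j_1}}\cdots\ol{D_{j_q}}\,D_{i_1}\cdots D_{i_q}\bigr)\varphi_0,$$
each index running over $\{1,\dots,p\}$. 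Applied to the Gaussian, the exterior-algebra factor is left multiplication by $\xi_{j_1,1}\wedge\cdots\wedge\xi_{j_q,q}\wedge\ol\xi_{i_1,1}\wedge\cdots\wedge\ol\xi_{i_q,q}$, which is precisely $B_\alpha\wedge\ol{B_{\alpha'}}$ for $\alpha = (j_1,\dots,j_q)$, $\alpha' = (i_1,\dots,i_q)$, by the definition of $B_\alpha$. For the operator factor, a one-line commutator computation from $[\partial_{z_a},z_b]=[\partial_{\ol z_a},\ol z_b]=\delta_{ab}$ (all other pairs of generators commuting) shows that the operators $D_a$ and $\ol D_b$ commute pairwise; hence after reordering, and relabeling the summation indices $\alpha\leftrightarrow\alpha'$ to match the conventions of (\ref{multiD}) and of $B_\alpha$, this factor becomes $D_\alpha\ol{D_{\alpha'}}\varphi_0$. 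Summing over all $q$-tuples as in (\ref{alpha}) and collecting the constant $2^{-4q}$ yields the stated formula.

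I expect the only real obstacle to be sign and ordering bookkeeping: the $(1,0)$-forms $\xi_{jk}$ and the $(0,1)$-forms $\ol\xi_{jk}$ anticommute, so one must verify that the order of the wedge factors produced by expanding $\prod_{k=1}^q$ agrees with the order fixed in the definition of $B_\alpha$, and that bringing the $(1,0)$-factors coming from $D^+$ to the left of the $(0,q)$-monomial produced by $\ol{D^+}$ costs no sign (left exterior multiplication by a $1$-form onto an already-formed monomial is sign-free). Once the commutativity of the scalar operators $D_a,\ol D_b$ is recorded and this ordering is pinned down, the identity follows immediately.
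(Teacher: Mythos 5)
Your proof is correct and takes essentially the same route as the paper's: both substitute the definition of the Howe operators into $\varphi_{q,q}^+=D^+\ol{D^+}\varphi_0$, expand the product of sums, and identify the resulting exterior and differential factors with $B_\alpha\wedge\ol{B_{\alpha'}}$ and $D_\alpha\ol{D_{\alpha'}}$. You are in fact slightly more explicit than the paper about the commutativity of the scalar operators $D_a$, $\ol{D_b}$ and the index relabeling needed to match the stated pairing of $\alpha$ with the unbarred factors, points the paper passes over silently.
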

\begin{proof}
	By definition
	$$\varphi_{q,q}^+ = \frac{1}{2^{4q}} \left\{\prod_{k=1}^q \sum_{j=1}^p 
	A_{j,k}\otimes 
	\left(\ol{z}_j - \frac{1}{\pi} \frac{\partial}{\partial 
		z_j}\right)\right\}
	\left\{\prod_{j=1}^p \sum_{k=1}^q \ol{A_{j,k}}\otimes 
	\left(z_j - \frac{1}{\pi} \frac{\partial}{\partial \ol{z}_j}\right)\right\}
	\varphi_0.$$
	Expanding the product, it can be written as
	$$ \frac{1}{2^{4q}} \underset{1 
		\leq 
		a'_1,\cdots ,a'_q \leq p}{\sum_{1\leq 
			a_1,\cdots,a_q\leq p}} A_{a_1,1}\wedge 
	\cdots \wedge A_{a_q,q} \wedge 
	\ol{A_{a_1',1}} \cdots \ol{A_{a_q',q}} 
	\otimes \left(\ol{z_{a_1}} - \frac{1}{\pi}\frac{\partial}{\partial 
		z_{a_1}}\right)\cdots \left( z_{{a}'_q} - 
	\frac{1}{\pi}\frac{\partial}{\partial 
		\ol{z_{a'_q}}}\right) \varphi_0$$
	which is the same as
	$$ \varphi_{q,q}^+ = \frac{1}{2^{4q}} \sum_{\alpha,\alpha'}
	B_\alpha \wedge \ol{B_{\alpha'}} \otimes D_\alpha 
	\ol{D_{\alpha'}} \varphi_0.$$	
\end{proof}

The function $\varphi^+_{q,q}$ lies in $\bigwedge^{{q,q}} \frakp^* 
\otimes \scrS(U)$ on which $U(1,1)\times G_U$ acts, as the described in 
\cite{KM86}. Let $K_U = G_U(\RR)\cap \Ut_{2m}(\CC)$, with the intersection 
taking place in $\GL_{2m}(\CC)\simeq \GL(U\otimes_\RR \CC)$. Corresponding to 
the decomposition $U=U_+ \oplus U_-$ we have $K_U = K_+ 
\times K_-$, 
with $K_{\pm}$ acting on $V_{\pm}$. Then 
$$\frakp^* \simeq (V_{+}\otimes_{E} 
V_-^*)_\CC \cong \Hom_{E}(V_{-},V_{+})\otimes \CC$$
and
$$ \scrS(V)\otimes {\bigwedge}^* \frakp^* \simeq \scrS(V_-) \otimes 
\scrS(V_{+}) \otimes {\bigwedge}^* \Hom_{E}(V_-,V_+)_\CC.$$
The action of $K_+$ is trivial on the Gaussian $\varphi_0$ and commutes with 
the Howe operator $\nabla$, so it leaves $\varphi_{q,q}^+$ invariant. The 
action of $K_-$ is through both factors $\scrS(V_-^n)$ and 
$\Hom_E(V_-,V_+)_\CC$. Of these, the action on $\scrS(V_-^n)$ evidently leaves 
$\varphi_{q,q}^+$ invariant, and the action on $\bigwedge^* 
\Hom_E(V_-,V_+)_\CC$ is by the determinant of $K_- \subset 
\GL({V_{-}\otimes\CC})$ 
\cite[Theorem 
3.1]{KM86}.

For $n>1$, we put
\begin{align}\varphi_{nq,nq}^+ = \varphi_{q,q}^+ \wedge \cdots 
\wedge 
\varphi_{q,q}^+\ \ \ 
(n\text{ times})\end{align}
considered as an element in $\bigwedge^{*} \frakp^* \otimes \scrS(U^n)$ the 
following way. Consider $\varphi_{q,q}^+$ as a Schwartz 
function on $U$ with values in $\bigwedge^{*} \frakp^*$. Then 
$\varphi_{nq,nq}^+$ 
corresponds in the same way to the function
$$U^n \rar {\bigwedge}^* \frakp^*,\ \ \ (u_1, \cdots, u_n)\mapsto 
\varphi_{q,q}^+(u_1)\wedge \cdots \wedge \varphi_{q,q}^+(u_n).$$

We now consider the case $n=p$, so that $\varphi_{pq,pq}^+$ is a sum of 
terms of the form
\begin{align}\label{term} B_{\alpha_{1}}\wedge \ol{B_{{\alpha_{1}}'}} \wedge 
B_{\alpha_2}\wedge 
\ol{B_{\alpha_2'}} \wedge \cdots \wedge B_{\alpha_{p}}\wedge 
\ol{B_{\alpha_p'}}\otimes \prod_{j=1}^p D_{\alpha_j}^{(j)} 
\ol{D_{\alpha_j'}^{(i)}}
\varphi_{0,{(j)}}\end{align}
where $\alpha_1,\cdots, \alpha_p$ are each a $q$-tuple of integers between $1$ 
and $p$, and $D_{\alpha_j}^{(j)}$, $\ol{D_{\alpha'_j}^{(j)}}$ are defined by 
(\ref{multiD}) acting on $\varphi_{0,{(j)}}$, 
the Gaussian on the $j$th copy of $U$ in $U^p$. Since $\bigwedge^{pq,pq} 
\frakp^*$ is one-dimensional, we 
can write
\begin{align} \label{phiKM} \varphi_{pq,pq}^+ = (\omega \wedge \ol{\omega}) 
\otimes 
\varphi_{\KM}\end{align}
for some $\varphi_{\KM}\in \scrS(U^p)$, with
\begin{align}\label{w} \omega = \xi_{1,1}\wedge\xi_{2,1}\wedge \cdots \wedge 
\xi_{p,1} 
\wedge 
\xi_{1,2}\wedge 
\cdots \wedge \xi_{p,2} \wedge \cdots \wedge \xi_{1,q}\wedge \cdots 
\xi_{p,q}.\end{align}

Let $\{z_i^{(j)}: 1\leq i \leq m,\ 1 \leq j \leq p\}$ denote the coordinates on 
$U^p= \prod_{j=1}^p U^{(j)}$, where for each $j$, $\{z_1^{(j)},\cdots, 
z_{m}^{(j)}\}$ are the standard 
coordinates for $U^{(j)}$.  Then
$$ \Phi_0 = \exp\left(-\pi \sum_{i=1}^m\sum_{j=1}^p |z_i^{(j)}|^2\right)$$
is the Gaussian on $U^p$.
\begin{lemma}\label{phiKMlem} The following formula holds:
	\begin{align} \varphi_{\KM} = (-1)^{pq(p-1)/2} 
	\sum_{\sigma, \sigma'\in S_p^q} D_{\sigma, \sigma'} \Phi_0 \in 
	\scrS(U^p),\end{align}
	where $S_p$ denotes the symmetric group on $\{1,\cdots, p\}$ and 
	$\sigma=(\sigma_1,\cdots, \sigma_q)$, $\sigma$ range over all pairs 
	of $q$-tuples of elements of the symmetric group $S_p$, and
	$$ D_{{\sigma}, {\sigma'}} = \prod_{j=1}^p \prod_{k=1}^q 
	D_{\sigma_k(j)}^{(j)} 
	\ol{D_{\sigma'_k(j)}^{(j)}}.$$
\end{lemma}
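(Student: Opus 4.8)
The plan is to expand $\varphi_{pq,pq}^{+}$ by inserting the formula of the preceding lemma into each of the $p$ wedge factors, to isolate the terms that survive projection onto the one-dimensional space $\bigwedge^{pq,pq}\frakp^{*}$, and then to compute the sign with which each surviving term contributes the generator $\omega\wedge\overline{\omega}$.

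\textbf{Expansion.} Write $\varphi_{pq,pq}^{+}=\varphi_{q,q}^{+,(1)}\wedge\cdots\wedge\varphi_{q,q}^{+,(p)}$, where the $j$th factor is built from the coordinates $z^{(j)}_{\bullet}$ on the $j$th copy $U^{(j)}$. Substituting the preceding lemma into each factor, and using that the Gaussians $\varphi_{0,(j)}$ multiply to $\Phi_{0}$ while the operators $D^{(j)}_{\bullet}$ and $\overline{D^{(j)}_{\bullet}}$ on distinct copies commute, one obtains
\[
\varphi_{pq,pq}^{+}=2^{-4pq}\sum_{\alpha_{1},\dots,\alpha_{p},\alpha_{1}',\dots,\alpha_{p}'}\bigl(B_{\alpha_{1}}\wedge\overline{B_{\alpha_{1}'}}\wedge\cdots\wedge B_{\alpha_{p}}\wedge\overline{B_{\alpha_{p}'}}\bigr)\otimes\Bigl(\prod_{j=1}^{p}D^{(j)}_{\alpha_{j}}\overline{D^{(j)}_{\alpha_{j}'}}\Bigr)\Phi_{0},
\]
the sum ranging over all $2p$-tuples of $q$-tuples $\alpha_{j},\alpha_{j}'\in\{1,\dots,p\}^{q}$.

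\textbf{Surviving terms.} The form $B_{\alpha_{1}}\wedge\cdots\wedge B_{\alpha_{p}}$ is a wedge of the $pq$ one-forms $\xi_{(\alpha_{j})_{k},k}$, $1\le j\le p$, $1\le k\le q$, and since $\dim(\frakp^{*})^{+}=pq$ it is nonzero — equivalently the whole term lies in the line $\bigwedge^{pq,pq}\frakp^{*}$ — exactly when for each fixed $k$ the map $j\mapsto(\alpha_{j})_{k}$ is a bijection of $\{1,\dots,p\}$, and likewise for the $\alpha_{j}'$. Setting $\sigma_{k}(j)=(\alpha_{j})_{k}$ and $\sigma_{k}'(j)=(\alpha_{j}')_{k}$ puts the surviving terms in bijection with pairs $(\sigma,\sigma')\in S_{p}^{q}\times S_{p}^{q}$, and then $\prod_{j}D^{(j)}_{\alpha_{j}}\overline{D^{(j)}_{\alpha_{j}'}}=\prod_{j,k}D^{(j)}_{\sigma_{k}(j)}\overline{D^{(j)}_{\sigma_{k}'(j)}}=D_{\sigma,\sigma'}$ (the operators on a fixed copy all commute, as a short commutator computation shows, so the product is independent of the order in which they are written). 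For a surviving term one then reorders $B_{\alpha_{1}}\wedge\overline{B_{\alpha_{1}'}}\wedge\cdots\wedge B_{\alpha_{p}}\wedge\overline{B_{\alpha_{p}'}}$ so that the $p$ holomorphic blocks precede the $p$ antiholomorphic ones: this is the perfect shuffle of $2p$ blocks of degree $q$, which has $p(p-1)/2$ inversions, so the resulting sign is $(-1)^{q\cdot p(p-1)/2}=(-1)^{pq(p-1)/2}$. One identifies $B_{\alpha_{1}}\wedge\cdots\wedge B_{\alpha_{p}}$ with $\omega$ and $\overline{B_{\alpha_{1}'}}\wedge\cdots\wedge\overline{B_{\alpha_{p}'}}$ with $\overline{\omega}$ by passing from the row-by-row reading of the array $(\xi_{\sigma_{k}(j),k})_{j,k}$ to the column-by-column reading $(\xi_{jk})_{j,k}$ defining $\omega$ in (\ref{w}); reading off the coefficient of $\omega\wedge\overline{\omega}$ then yields the stated formula for $\varphi_{\KM}$.

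\textbf{Main obstacle.} The delicate part is the sign bookkeeping in the last step: one must confirm that the internal reorderings of the holomorphic and antiholomorphic blocks, together with the normalizing constant, combine to leave precisely $(-1)^{pq(p-1)/2}$ with no residual dependence on $(\sigma,\sigma')$. The transpose sign $(-1)^{\binom{p}{2}\binom{q}{2}}$ between the two readings occurs for each of the two blocks and so cancels, while the remaining permutation signs produced by the column-permutations $\sigma_{k},\sigma_{k}'$ must be reconciled with the ordering conventions fixed for $\omega$ in (\ref{w}) and for $D_{\sigma,\sigma'}$ in the lemma; once this is in place, everything else is routine multi-index manipulation.
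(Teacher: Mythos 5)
Your overall strategy is the same as the paper's: expand $\varphi_{pq,pq}^{+}$ via the preceding lemma, observe that a term survives in the one-dimensional space $\bigwedge^{pq,pq}\frakp^{*}$ exactly when the data $(\alpha_{1},\dots,\alpha_{p})$ and $(\alpha_{1}',\dots,\alpha_{p}')$ assemble into $q$-tuples of permutations $\sigma,\sigma'\in S_{p}^{q}$, and then compute the sign relating each surviving wedge product to $\omega\wedge\ol{\omega}$. Your identification of the surviving terms and your computation of the block-shuffle sign $(-1)^{q^{2}\binom{p}{2}}=(-1)^{pq(p-1)/2}$ are both correct and match the paper.

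The gap is that you stop exactly at the point that carries the content of the lemma. You note that after the shuffle one must still reorder the holomorphic block $B_{\alpha_{1}}\wedge\cdots\wedge B_{\alpha_{p}}$ into $\omega$ and the antiholomorphic block into $\ol{\omega}$, and you defer the ``remaining permutation signs produced by the column-permutations $\sigma_{k},\sigma_{k}'$'' to a reconciliation you never perform. Carrying it out, each of those reorderings contributes the transpose sign $(-1)^{\binom{p}{2}\binom{q}{2}}$ (these two cancel, as you say) \emph{times} $\prod_{k}\mathrm{sgn}(\sigma_{k})$, resp.\ $\prod_{k}\mathrm{sgn}(\sigma_{k}')$, so the total coefficient of $\omega\wedge\ol{\omega}$ in the $(\sigma,\sigma')$-term is $(-1)^{pq(p-1)/2}\prod_{k}\mathrm{sgn}(\sigma_{k})\mathrm{sgn}(\sigma_{k}')$. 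This residual factor is not identically $1$: already for $p=2$, $q=1$ the term with $\sigma=\mathrm{id}$, $\sigma'=(12)$ is $\xi_{1,1}\wedge\ol{\xi_{2,1}}\wedge\xi_{2,1}\wedge\ol{\xi_{1,1}}=+\,\omega\wedge\ol{\omega}$, whereas $\sigma=\sigma'=\mathrm{id}$ gives $-\,\omega\wedge\ol{\omega}=(-1)^{pq(p-1)/2}\omega\wedge\ol{\omega}$. So the hoped-for cancellation does not occur, and a complete argument must either carry the factors $\prod_{k}\mathrm{sgn}(\sigma_{k}\sigma_{k}')$ into the sum (yielding the determinant-type expression familiar from Kudla--Millson) or explain why they may be suppressed; your proposal does neither. (The paper's own two-stage sorting argument asserts at the analogous moment that the ``second kind'' transpositions of stage one cancel against stage two, which likewise holds only when $\prod_{k}\mathrm{sgn}(\sigma_{k})=\prod_{k}\mathrm{sgn}(\sigma_{k}')$, so this is precisely the delicate point.) Separately, your expansion retains the normalization $2^{-4pq}$, which does not appear in the stated formula; you should either track it or note where it is absorbed.
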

\begin{proof} In the expression (\ref{term}), writing $\alpha_j = 
	(a_{j1},\cdots, a_{jq})$ and $\alpha_j'=(a_{j1}',\cdots, a_{jq}')$ for each 
	$j$, the wedge product factor on the left is
	\begin{align}\label{xis} (\xi_{a_{11},1} \wedge \cdots 
	\xi_{a_{1q},q})\wedge 
	(\ol{\xi_{a'_{11},1}}\wedge
	\cdots \ol{\xi_{a'_{1q},q}})\wedge \cdots \wedge (\xi_{a_{p1},1} \wedge 
	\cdots \xi_{a_{pq},q})\wedge (\ol{\xi_{a'_{p1},1}} \wedge \cdots 
	\ol{\xi_{a'_{pq},q}}).
	\end{align}
	
	For this expression not to vanish there must be no repeated wedge factors, 
	so 
	that it coincides with $\omega \wedge \ol{\omega}$ up to sign. This amounts 
	to 
	the condition
	$$\{a_{1k},\cdots,a_{pk}\} = \{ 
	a_{1k}',\cdots, a_{pk}'\} = \{1,\cdots, p\},\ \ 1 \leq k \leq q.$$
	In other words, $\sigma_k(j) = a_{jk}$ and $\sigma_k'(j)=a_{jk}'$ define 
	elements $\sigma_k, \sigma'_k$ in $S_p$. Then writing 
	$\xi_{\sigma,\sigma'}$ 
	for (\ref{xis}), we get 
	$$ \varphi_{pq,pq}^+ =\sum_{\sigma,\sigma'\in S_p^k} \xi_{\sigma,\sigma'} 
	\otimes D_{\sigma,\sigma'} \Phi_0.$$
	
	It's enough to prove that $\xi_{\sigma,\sigma'}=(-1)^{pq(p-1)/2}\omega 
	\wedge \omega'$.  To show this we perform a sequence of transpositions that 
	transforms (\ref{xis}) to $\omega \wedge \ol{\omega}$ and count the total 
	sign change. We do this in two stages.
	
	In the first stage, we sort the terms $\xi_{i,j}$. We find the 
	\textit{last} term in the desired order among them, i.e. $\xi_{p,q}$, and 
	shift it 
	left until it is in the leftmost spot, where $\xi_{1,1}$ is in (\ref{w}). 
	Then 
	we take the second last term
	$\xi_{p-1,q}$ and also shift it all the way to the left, so that now 
	$\xi_{p,q}$ 
	becomes the second term from the left. We continue with $\xi_{p-2,q}$, etc. 
	shifting the terms in strictly reverse order to the leftmost spot until at 
	the 
	end we move $\xi_{1,1}$ to the leftmost spot. At that point all the 
	$\xi_{i,j}$ terms will be in the same order as in (\ref{w}), while the 
	$\ol{\xi_{i,j}}$ terms remain unsorted. 
	
	In the second stage, we sort the $\ol{\xi_{i,j}}$ terms. We take the term 
	$\ol{\xi_{p,q}}$ and shift it to the left until it is in position $pq+1$ 
	from 
	the left, to the right of $\xi_{p,q}$. 
	Then 
	we take the second last term $\ol{\xi_{p,q-1}}$ and shift it also to 
	position 
	$pq+1$, between $\xi_{p,q}$ and $\ol{\xi_{p,q}}$. We continue again 
	in reverse order shifting all terms $\ol{\xi_{i,j}}$ to position $pq+1$ 
	until 
	at 
	the end we shift $\ol{\xi_{1,1}}$ into that spot. At this point all terms 
	are 
	in the correct order and the expression is $\omega 
	\wedge \ol{\omega}$ up to sign.
	
	Now we count how many sign changes were made in each stage. In the first 
	stage, 
	when moving a term $\xi$ we distinguish two kinds of 
	transpositions of adjacent terms: if $\xi$ is transposed with one of the 
	terms 
	$\ol{\xi_{i,j}}$, we consider it of the first kind. If it is 
	transposed with one of $\xi_{i,j}$, we consider it of the second kind. 
	
	We count the transpositions of the first kind first. Note that each term 
	$\xi_{i,j}$ 
	in $B_{\alpha_p}$ has to pass every term $\ol{\xi_{i,j}}$ occurring in 
	$B_{\alpha_1'}, 
	B_{\alpha_2'}, 
	\cdots, B_{\alpha_{p-1}'}$, therefore it makes $q(p-1)$ passes of the 
	first kind. Altogether the $q$ terms of $B_{\alpha_p}$ make $q^2(p-1)$ 
	passes of the first kind. The terms $\xi_{i,j}$ in $B_{\alpha_{p-1}}$ have 
	to pass every $\ol{\xi_{i,j}}$ term in $B_{\alpha_1'}, B_{\alpha_2'}, 
	\cdots, B_{\alpha_{p-2}'}$ exactly once, so altogether they make $q^2(p-2)$ 
	passes of the first kind. Terms in $B_{\alpha_{p-2}}$ make
	$q^2(p-3)$ passes of the first kind, etc., and altogether the number of 
	passes 
	of the first kind in the first stage is $q^2(p-1) + q^2(p-2) + \cdots + 
	q^2=q^2(p-1)p/2$, which is $pq(p-1)/2$ modulo $2$.
	
	Now we observe that the number of transpositions of the second kind in the 
	first stage is equal to the total number of transpositions in the second 
	stage. 
	Thus their sign contributions cancel out, and the total sign change is 
	$(-1)^{\frac{pq(p-1)}{2}}$ as claimed.
\end{proof}

Recall the hermitian space $(V,Q)=(V_0,Q_0)\otimes_{E_0} E$ from the 
introduction and put $U=V(\RR)$. Let $p_i: D=D_0^g \rar 
D_0$ denote the $i$th projection and put 
\begin{align*} \varphi_{\infty} = p_1^*(\varphi_{nq,nq}^+) \wedge \cdots \wedge 
p_g^*(\varphi_{nq,nq}^+)\in  \Omega^{gnq,gnq}(D)\otimes 
\scrS(V_0(\RR)^{ng}),\end{align*}
where identify $\scrS(V(\RR)^n)$ and $\scrS(V_0(\RR)^{ng})$ using the 
isomorphism from (\ref{V0^nV}). If 
$\Delta: D_0 
\rar D$ is the diagonal map, then as explained in \cite{KM86}
\begin{align} \Delta^* \varphi_\infty = \varphi_{ngq,ngq}^+.
\end{align}

In particular, if $ng=p$, then 
\begin{align} \Delta^* \varphi_\infty = \omega \wedge \ol{\omega} 
\otimes \varphi_{\KM} \in \Omega^{pq,pq}(D_0)\otimes 
\scrS(V_0(\RR)^{ng}).\end{align}

\subsection{Vanishing of the Ikeda map}

For $z$ one of the variables $z_1,\cdots, z_m$ and non-negative integers $a$ 
and $b$ we define the function $F_{a,b}(z)$ as a polynomial in $z$ and $\ol{z}$ 
by the relation
\begin{align} \label{Fab} \left(\ol{z} - \frac{1}{\pi} \frac{\partial}{\partial 
	z}\right)^a\left(z - 
\frac{1}{\pi} \frac{\partial}{\partial \ol{z}}\right)^b \varphi_0 = 
F_{a,b}(z) 
\cdot \varphi_0.\end{align}
For a rapidly decreasing function $f(z)$ on $\CC$, put \begin{align}I(f) = 
\int_{\CC} f(z) e^{-2\pi |z|^2} dz.
\end{align}
For non-negative integers $a$ and $b$, $a\neq b$ it's easy to verify that
\begin{align}\label{zazb}I(P_{a,b})=0,\ \ \ P_{a,b}(z) = \ol{z}^a 
z^b\end{align}
by sign considerations. 
\begin{lemma}\label{lemFab}
	$I(F_{a,b})=0$ if $a\neq b$.
\end{lemma}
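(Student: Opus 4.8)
The plan is to exploit the $U(1)$-equivariance built into the definition. Write $D = \ol{z} - \frac{1}{\pi}\frac{\partial}{\partial z}$ and $D' = z - \frac{1}{\pi}\frac{\partial}{\partial \ol{z}}$, so that (\ref{Fab}) reads $D^a D'^b \varphi_0 = F_{a,b}(z)\,\varphi_0$. First I would record that $F_{a,b}$ is genuinely a polynomial in $z$ and $\ol{z}$: since $\frac{\partial}{\partial z}\varphi_0 = -\pi\ol{z}\,\varphi_0$ and $\frac{\partial}{\partial \ol{z}}\varphi_0 = -\pi z\,\varphi_0$, each of $D$ and $D'$ sends a function of the form $p(z,\ol{z})\varphi_0$ with $p$ a polynomial to $p'(z,\ol{z})\varphi_0$ with $p'$ a polynomial; iterating $a+b$ times starting from $p=1$ shows $F_{a,b}$ is a polynomial.

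The key step is to track the action of the rotation $z\mapsto e^{i\theta}z$. Multiplication by $\ol{z}$ and the operator $\frac{\partial}{\partial z}$ both lower the $U(1)$-weight by $1$, while multiplication by $z$ and $\frac{\partial}{\partial \ol{z}}$ both raise it by $1$; the Gaussian $\varphi_0$ is rotation-invariant, hence of weight $0$. Therefore $D$ has weight $-1$, $D'$ has weight $+1$, and $D^a D'^b\varphi_0$ has weight $b-a$. Since $\varphi_0$ is invariant and nowhere vanishing, this forces $F_{a,b}(e^{i\theta}z) = e^{i(b-a)\theta}F_{a,b}(z)$, i.e. $F_{a,b}$ is a linear combination of monomials $\ol{z}^{\,j}z^{k}$ with $k-j = b-a$.

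Finally, when $a\neq b$ every such monomial has $j\neq k$, so by (\ref{zazb}) each of them integrates to $0$ against $e^{-2\pi|z|^2}$; by linearity of $I$ we conclude $I(F_{a,b})=0$. I do not expect a genuine obstacle here: the entire content is the weight bookkeeping in the previous paragraph. The one point deserving care is the verification that $F_{a,b}$ is an actual polynomial rather than a formal symbol, since it is this fact that makes the termwise application of (\ref{zazb}) legitimate.
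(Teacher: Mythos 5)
Your proof is correct, but it takes a genuinely different route from the paper's. The paper argues by induction along the recursion: it first computes $F_{0,b}=(2z)^b$, then tracks the invariant $\mu(f)=\min(\beta-\alpha)$ over the monomials $\ol{z}^{\alpha}z^{\beta}$ appearing in $f$, showing that each application of $\ol{z}-\frac{1}{\pi}\partial_z$ decreases $\mu$ by at most $1$, so that every monomial of $F_{a,b}$ has $\beta-\alpha\geq b-a>0$; the conclusion then follows from (\ref{zazb}) exactly as in your last step. Your $U(1)$-weight bookkeeping replaces that induction by a one-line equivariance computation and in fact proves something slightly stronger, namely that $F_{a,b}$ is homogeneous of weight exactly $b-a$ (every monomial satisfies $k-j=b-a$, not merely $k-j>0$); it is essentially the same "sign consideration" the paper invokes to justify (\ref{zazb}), applied one level up. What the paper's recursive bookkeeping buys in exchange is uniformity with the next step of the argument: the same recursion is reused to compute $F_{k,k}$ explicitly (the Laguerre polynomials $g_k$ of Proposition \ref{gkexp}), which is needed for Lemma \ref{fk=0} and is not accessible by symmetry alone. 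Your preliminary check that $F_{a,b}$ is an honest polynomial, via $D(p\varphi_0)=(2\ol{z}p-\frac{1}{\pi}\partial_z p)\varphi_0$ and its conjugate, is the right thing to verify before applying (\ref{zazb}) termwise.
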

\begin{proof}
	Assume $b>a$ without loss of generality. First, a simple induction argument shows
	$$ F_{0,b} = (2 z)^b.$$
	Then $F_{a,b}$ is determined by
	$$ \left(\ol{z} - \frac{1}{\pi} \frac{\partial}{\partial 
		z}\right)^a (2z)^b \varphi_0 = F_{a,b}\varphi_0.$$
	Note that for $\alpha\geq 0, \beta\geq 1$, we have
	$$ \left(\ol{z} - \frac{1}{\pi} \frac{\partial}{\partial 
		z}\right) \ol{z}^\alpha z^\beta \varphi_0 = (2\ol{z}^{\alpha+1} 
	z^\beta-\frac{\beta}{\pi} 
	\ol{z}^{\alpha} z^{\beta-1})\varphi_0.$$
	
	Now let $f(z)$ be a polynomial in $z$ and $\ol{z}$ such that the 
	coefficient of 
	$\ol{z}^{\alpha} z^\beta$ is non-zero only if $\beta>\alpha$. Let 
	$\mu(f)$ be the 
	minimum of $\beta-\alpha$ among all terms $\ol{z}^{\alpha} z^\beta$ of 
	$f$ with non-zero coefficients. The above 
	identity shows that if $g(z)$ is defined by
	$$ g(z) \varphi_0 = \left(\ol{z} - \frac{1}{\pi} \frac{\partial}{\partial 
		z}\right) f(z) \varphi_0,$$
	then 
	$$ \mu(g) \geq \mu(f)-1,\ \ \ \text{if}\ g\not\equiv 0$$
	Since $\mu(F_{0,b})=b$, it follows that if $F_{a,b}\not\equiv 0$, then 
	$\mu(F_{a,b}) 
	\geq b-a>0$. Then every 
	non-zero term of $F_{a,b}$ is of the form $\ol{z}^\alpha z^\beta$, with 
	$\beta>\alpha$. The lemma then follows from (\ref{zazb}).
\end{proof}

Next we set out to compute $I(f_k(z))$, for $f_k(z)=F_{k,k}(z)$, $k\geq 0$. The 
functions $f_k$ are determined by 
\begin{align} E^k \varphi_0 = f_k(z) \varphi_0,\ \ \ f_0(z)=1,\ \ \ k\geq 
0\end{align}
where
\begin{align} E = \left(\ol{z} - \frac{1}{\pi}\frac{\partial}{\partial 
	z}\right)\left(z - 
\frac{1}{\pi}\frac{\partial}{\partial \ol{z}}\right).\end{align}
For any smooth function $f(z,\ol{z})$, we have
\begin{align*} E (f \varphi_0) = \left\{ \left( 4|z|^2 - \frac{2}{\pi}\right)f 
- 
\frac{2}{\pi}\left(\ol{z}\frac{\partial f}{\partial \ol{z}} + z \frac{\partial 
	f}{\partial z}\right) + \frac{1}{\pi^2} \frac{\partial^2 f}{\partial z 
	\partial \ol{z}} \right\}  \varphi_0.\end{align*}

We normalize by changing variables $z = \frac{w}{\sqrt{2\pi}}$ and setting
\begin{align}\label{gk}
g_k(w) = f_k\left(\frac{w}{\sqrt{2\pi}}\right) \frac{\pi^k}{2^k}
\end{align}
to obtain the recursive relation
\begin{align}g_{k+1}(w)= \left\{ \left( |w|^2 - 1 \right) g_k - 
\left( \ol{w} 
\frac{\partial g_k}{\partial \ol{w}} + w \frac{\partial g_k}{\partial w}\right)
+ 
\frac{\partial^2 g_k}{\partial w \partial \ol{w}}\right\},\ \ \ 
g_0(w)=1
\end{align}

Then $g_k(w)$ is the $k$th Laguerre polynomial in $|w|^2$, normalized to be 
monic and have integer coefficients, e.g. $g_1(w) = |w|^2 - 1$.
\begin{prop}\label{gkexp} For $k\geq 1$,
	$$g_k(w) = \sum_{r=0}^k (-1)^{r+k} |w|^{2r} \frac{(k!)^2}{(r!)^2(k-r)!}.$$
\end{prop}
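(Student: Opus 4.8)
The plan is to reduce the two-variable recurrence to an ordinary differential recursion in the single variable $x=|w|^2$, and then verify the closed form by induction on $k$ via a routine comparison of coefficients.

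First I would show by induction that $g_k$ is a polynomial in $x=|w|^2$, writing $g_k = G_k(x)$. Treating $w$ and $\ol{w}$ as independent variables, one has $w\,\partial_w G_k(x) = \ol{w}\,\partial_{\ol{w}} G_k(x) = x\,G_k'(x)$ and $\partial_w\partial_{\ol{w}} G_k(x) = x\,G_k''(x) + G_k'(x)$. Substituting these into the recurrence for $g_{k+1}$ turns it into
\[ G_{k+1}(x) = (x-1)\,G_k(x) - (2x-1)\,G_k'(x) + x\,G_k''(x), \qquad G_0(x)=1, \]
which visibly preserves polynomiality and raises the degree by one, so $\deg G_k = k$, consistent with $g_k$ being the $k$th (monic) Laguerre polynomial in $|w|^2$.

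Next I would prove by induction on $k\ge 0$ that $G_k(x) = \sum_{r=0}^{k} c_{k,r} x^r$ with $c_{k,r} = (-1)^{r+k}\frac{(k!)^2}{(r!)^2(k-r)!}$ and $c_{k,r}=0$ for $r<0$ or $r>k$; this reads $G_0=1$ for $k=0$ and $G_1=x-1=|w|^2-1$ for $k=1$, matching the stated normalization. For the inductive step, reading off the coefficient of $x^r$ on the right-hand side of the displayed recursion gives $c_{k,r-1} - (2r+1)c_{k,r} + (r+1)^2 c_{k,r+1}$, and after factoring out the common sign $(-1)^{r+k+1}$ and the factor $(k!)^2/\big((r!)^2(k-r+1)!\big)$ this collapses to the purely numerical identity
\[ r^2 + (2r+1)(k-r+1) + (k-r+1)(k-r) = (k+1)^2, \]
which one checks instantly by setting $s=k-r+1$ and recognizing the left side as $(r+s)^2$. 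The result is exactly $c_{k+1,r}$, completing the induction, and hence the proposition.

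The computation is essentially all bookkeeping; the only place that needs genuine care is the first step — correctly converting the recurrence in $w,\ol{w}$ into the ODE recursion for $G_k$ and confirming it keeps us inside polynomials in $|w|^2$ — together with tracking signs when shifting indices $r-1, r, r+1$ in the coefficient comparison. An alternative, if preferred, is to match the ODE recursion against the standard three-term recurrence and differential equation for Laguerre polynomials and quote the classical closed form, but the direct induction above is self-contained and about as short.
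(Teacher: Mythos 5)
Your proof is correct and follows essentially the same route as the paper: induction on $k$, substituting the closed form into the recurrence, and comparing coefficients of $|w|^{2r}$, which reduces to the same numerical identity $r^2+(2r+1)(k-r+1)+(k-r)(k-r+1)=(k+1)^2$. The only cosmetic differences are that you first rewrite the recurrence as an ODE recursion in $x=|w|^2$ and handle the boundary coefficients uniformly via the convention $c_{k,r}=0$ outside $0\le r\le k$, whereas the paper checks the leading and constant terms separately.
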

\begin{proof}
	The formula is evidently valid for $k=1$. Assume it holds for some $k$. 
	Noting 
	that for $r>0$,
	$$ \ol{w} \frac{\partial}{\partial \ol{w}} |w|^{2r} = 
	w \frac{\partial}{\partial w}|w|^{2r} = r |w|^{2r},\ \ \ 
	\frac{\partial^2}{\partial w \partial \ol{w}} |w|^{2r} = r^2 |w|^{2r-2}$$
	we have 
	$$ g_{k+1}(w) = (-1)^k (|w|^2-1)(k!) + \sum_{r=1}^k (-1)^{r+k} 
	\frac{(k!)^2}{(r!)^2 
		(k-r)!} \left\{ |w|^{2r+2} - |w|^{2r} - 2r |w|^{2r} + 
	r^2|w|^{2r-2}\right\}$$
	
	For $0<r<k$, the coefficient of $|w|^{2r}$ in the sum is equal to
	\begin{align*} &(-1)^{k+r+1} k! \left\{ \frac{k!}{((r-1)!)^2 (k-r+1)!} + 
	(1+2r) 
	\frac{k!}{(r!)^2(k-r)!} + (r+1)^2 
	\frac{k!}{((r+1)!)^2(k-r-1)!}\right\}\\
	&=(-1)^{k+r+1} \frac{(k!)^2}{(r)!^2(k-r+1)!}\left(r^2 + (1+2r)(k-r+1) + 
	(k-r)(k-r+1)\right)\\
	&=(-1)^{k+r+1}\frac{(k+1)!^2}{r!^2(k-r+1)!}
	\end{align*}
	
	The leading coefficient of $g_{k+1}(w)$  is 
	$\frac{(k!)^2}{(k!)^2(k-k)!}=1$, and the 
	constant term is 
	$$(-1)^{k+1}(k!) + (-1)^{k+1}\frac{(k!)^2}{(1!)^2(k-1)!}\cdot 1^2 = 
	(-1)^{k+1}(k+1)!.$$
	All coefficients match those in the formula, which is therefore valid for 
	$k+1$, and by induction for all $k$.
\end{proof}

Consider now a two-dimensional complex hermitian space of signature $(1,1)$, 
with standard coordinates $(x_1,x_2)$. Put
$$ F_k(x_1,x_2) = f_k(x_1) e^{-\pi(|x_1|^2+|x_2|^2)}.$$

It can be split into a sum of two isotropic lines via the change of 
variables $(w_1,w_2)=(\frac{x_1+x_2}{\sqrt{2}},\frac{x_1-x_2}{\sqrt{2}}).$
In these coordinates $F_k$ is given by
\begin{align}\label{Fx1x2} 
F_k(x_1,x_2)=f_k\left(\frac{w_1+w_2}{\sqrt{2}}\right)e^{-\pi(|w_1|^2+|w_2|^2)}.
\end{align}

The computation of the integral of $F_k$ along the isotropic line $(w_1,0)$ is 
the key lemma of this section.

\begin{lemma}\label{fk=0} $$\int_{\CC} 
	f_k\left(\frac{z}{\sqrt{2}}\right)e^{-\pi |z|^2} dz
	=0 $$
\end{lemma}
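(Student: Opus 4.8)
The plan is to convert the integral into a finite binomial sum using the closed form for $g_k$ established in Proposition \ref{gkexp}, and then recognize that sum as $(1-1)^k$. We only need the case $k\ge1$; for $k=0$ one has $f_0=1$ and the integral is merely the (nonzero) normalizing constant of the measure, so the assertion is to be read for $k\ge1$.

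First I would pass to the normalized variable. By the definition (\ref{gk}), $g_k(w)=f_k\bigl(w/\sqrt{2\pi}\bigr)\,\pi^k/2^k$, so substituting $w=\sqrt{\pi}\,z$ gives $f_k\bigl(z/\sqrt2\bigr)=(2/\pi)^k\,g_k(\sqrt{\pi}\,z)$, with $|\sqrt{\pi}\,z|^2=\pi|z|^2$. Proposition \ref{gkexp} then yields
$$ f_k\!\left(\frac{z}{\sqrt2}\right)=\left(\frac{2}{\pi}\right)^{k}\sum_{r=0}^{k}(-1)^{r+k}\,\pi^{r}|z|^{2r}\,\frac{(k!)^2}{(r!)^2(k-r)!}. $$
Next I would compute the Gaussian moments: in polar coordinates $z=\rho e^{i\theta}$ the substitution $u=\pi\rho^2$ gives $\int_{\CC}|z|^{2r}e^{-\pi|z|^2}\,dz=\pi^{-r}\,r!$ (up to the fixed, and here irrelevant, normalization of the measure on $\CC$, since we are only claiming that the integral vanishes). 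Integrating the displayed formula term by term, the powers of $\pi$ from the moments cancel those in the coefficients, leaving
$$ \int_{\CC} f_k\!\left(\frac{z}{\sqrt2}\right)e^{-\pi|z|^2}\,dz=\left(\frac{2}{\pi}\right)^{k}k!\sum_{r=0}^{k}(-1)^{r+k}\binom{k}{r}=\left(\frac{-2}{\pi}\right)^{k}k!\,(1-1)^{k}=0 $$
for every $k\ge1$.

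I do not expect a genuine obstacle: the only non-trivial input, the closed form for the Laguerre-type polynomial $g_k$, is already available from Proposition \ref{gkexp}, and the rest is bookkeeping — the two points requiring care being the change of variable $w=\sqrt{\pi}\,z$ relating $f_k$ to $g_k$ and the tracking of the powers of $\pi$. It is worth noting that this is exactly the $a=b$ case of $I(F_{a,b})$, which the parity argument behind Lemma \ref{lemFab} cannot reach; an identity of this shape — rather than a vanishing for trivial sign reasons — is precisely what one should anticipate needing here. (Alternatively one could sidestep Proposition \ref{gkexp}: up to normalization $g_k$ equals the Laguerre polynomial $L_k^{(0)}(|w|^2)$, and the statement is the orthogonality relation $\int_0^\infty L_k^{(0)}(t)e^{-t}\,dt=0$ for $k\ge1$; but with the explicit formula in hand it is cleaner to argue directly.)
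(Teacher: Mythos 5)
Your proof is correct and follows essentially the same route as the paper's: substitute the closed form of $g_k$ from Proposition \ref{gkexp}, evaluate the Gaussian moments $\int_{\CC}|z|^{2r}e^{-\pi|z|^2}\,dz=r!/\pi^{r}$, and reduce to the alternating binomial sum $\sum_{r=0}^k(-1)^r\binom{k}{r}=0$. Your remark that the statement should be read for $k\ge 1$ (the $k=0$ integral being the Gaussian normalization, not zero) is a fair observation that the paper leaves implicit.
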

\begin{proof}Using the relation $f_k(z) = g_k(\sqrt{2\pi} z)\frac{2^k}{\pi^k}$ 
	and the formula for $g_k$ from Proposition \ref{gkexp}, we express the 
	integral as
	$$\frac{2^k(-1)^k \cdot (k!)}{\pi^k} \sum_{r=0}^k 
	\binom{k}{r}\frac{(-1)^r\pi^r}{r^!} \int_{\CC} |z|^{2r} 
	e^{-\pi|z|^2}  
	|z|^{2r} dz.$$
	
	Since
	$$ 2\pi \int_{0}^\infty x^{2r+1} e^{-\pi x^2} dx = \frac{r!}{\pi^{r}}$$
	the integral is
	$$ \frac{2^k (-1)^k k!}{\pi^k} \sum_{r=0}^k \binom{k}{r} (-1)^r = 0.$$
\end{proof}

We now assume $ng=p$ and $q<p$. Let 
$\varphi_L$ denote 
the characteristic function of $\wh{L}\subset V(\Ads_f)$, where $L\subset V$ is a maximal lattice. Put
\begin{align}
\varphi = \varphi_{\KM}\otimes \varphi_L \in \scrS(V(\Ads)^n).
\end{align}

Since $q<p$, the Weil index $r$ of $(V_0,Q_0)$ is $q$. Then the numbers $r$, 
$n_0 = \dim W_0 = ng$, and $m=p+q = \dim V_0$, satisfy
$$ n_0 < m \leq 2n_0,\ \ \ m-r \leq n_0.$$

These are parameters for the pair $(W_0,J_0)$ and $(V_0,Q_0)$ falling 
outside the classical convergence range for the Siegel-Weil formula 
\cite{Weil65Acta}, and inside the range for the regularized formula of Ichino 
\cite{Ich04MZ}.

Let $r_0=m-n_0 = q$. Suppose that $\{u_1,\cdots, u_m\}$ is the fixed standard 
basis of $V_0(\RR)$ used to define $\varphi_0$. for $j=1,\cdots, r_0$, put
$$ e_j = \frac{u_j + u_{m+1-j}}{\sqrt{2}},\ \ \ f_j = \frac{u_j - 
	u_{m+1-j}}{\sqrt{2}}.$$
Then $\{e_i, f_j: 1 \leq i,j \leq r_0\}$ form a basis for a subspace $V''_0$ of 
$V_0(\RR)$ such that
$$ Q_0(e_i, f_j) = \delta_{i,j},\ \ \ Q_0(f_i,f_j)=Q_0(e_i,e_j)=0.$$
Put  $Q''_0=Q_0|_{V''_0},$ 
and 
\begin{align}(V'_0,Q'_0) = 
(V''_0,Q''_0)^\perp,\end{align}
so that $(V_0,Q_0) = (V'_0,Q'_0) + (V''_0,Q''_0)$ is an orthogonal 
decomposition.

Let $\beta'=\{v'_1,\cdots, v'_{p-q}\}$ be any basis for $(V'_0,Q'_0)$
and put $\beta=\{e_1,\cdots, e_{r_0},v'_1,\cdots, 
v'_{p-q},f_1,\cdots f_{r_0}\}$. Then 
\begin{align*} [Q_0]_{\beta} =  \left(\begin{array}{ccc}& & I_{p} \\ & 
[Q'_0]_{\beta'}& \\
I_{p}& &\end{array}\right).
\end{align*}

Using this matrix presentation we take $P_0$ to be following parabolic subgroup of $H_0$:
\begin{align} P_0(R) = \left\{ \left(\begin{array}{ccc} A  & * & * \\ & X & 
* \\ & & 
{}^t \ol{A}^{-1}\end{array}\right): A\in \GL_{r_0}(R\otimes E_0),\ X\in 
H'_0(R)\right\},\ \ \ R\in \QQ\dash \Alg
\end{align}
where $H_0'=U(V_0',Q_0')\simeq U(p-q;E_0/\QQ)$.

We choose a maximal compact subgroup $K_0=\prod_v K_{v}$ of $H(\Ads)$ as 
follows. For $v$ a rational prime, $K_v$ is the stabilizer of 
$L_v \subset V_0(\QQ_v)$. For $v=\infty$, we take $K_\infty = H_0(\RR) \cap 
U_{2m}(\CC)$, with $U_{2m}(\CC)$ and $H_0(\RR)$ considered as subgroups of 
$\GL_{2m}(\CC)\simeq \GL(V_0(\RR))$.

The regularization procedure in \cite{Ich04MZ} involves applying to a function
$\varphi_0\in \scrS(V_0(\RR)^{ng})$
the ($g$-fold tensor power of) maps
$$ \pi : \scrS(V_0(\Ads)^{n}) \rar  \scrS(V_0'(\Ads)^{n}),\ \ \ \pi = 
\Ik \circ \pi_{K_0} ,$$
where 
\begin{align} \pi_{K_0}: \scrS(V_0(\Ads)^n) \rar \scrS(V_0(\Ads)^n),\ \ \ 
\label{piK} \pi_{K_0}(\psi)(x) 
= \int_{K_0} \psi(kx) 
dk\end{align}
and $\Ik$ is the Ikeda map
\begin{align}
\Ik: \scrS(V_0(\Ads)^n) \rar \scrS (V_0'(\Ads)^n),\ \ \ \Ik(\psi)(x) = 
\pi_{Q'}^{Q}(\psi)(x) = \int_{V''_e(\Ads)^n} 
\Psi\left(\begin{array}{c}y \\ x 
\\ 0 \end{array}\right) dy 
\end{align}
with
\begin{align} V_e'' = \Span_{E}\{e_1,\cdots, e_{r_0}\} \subset V''_0,\ \ \ \ 
V_{f}'' = \Span_{E}\{f_1,\cdots,f_{r_0}\},\ \ \ V_0 = V_e'' \oplus V'_0 \oplus 
V_{f}''.\end{align}

\begin{thm}\label{Ikedakills} $\varphi_{KM}$ lies in the kernel of the Ikeda map $\scrS(V_0(\Ads)^{p}) \rar \scrS(V_0'(\Ads)^{p})$.
\end{thm}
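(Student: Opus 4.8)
The plan is to reduce the statement to the real place and then to a one-variable Gaussian integral which Lemmas \ref{lemFab} and \ref{fk=0} have already been set up to kill. Since the Ikeda map is a restricted tensor product over places and $\varphi=\varphi_{\KM}\otimes\varphi_L$ is factorizable with archimedean component $\varphi_{\KM}$, it suffices to show the archimedean Ikeda integral $\Ik_\infty(\varphi_{\KM})$, taken over $(V_e''(\RR))^p$, is identically zero. By Lemma \ref{phiKMlem}, $\varphi_{\KM}=\pm\sum_{\sigma,\sigma'\in S_p^q}D_{\sigma,\sigma'}\Phi_0$. Because the operators $D_i^{(j)}$ and $\ol{D_i^{(j)}}$ all commute and $D_i^{(j)}$ acts only on the single variable $z_i^{(j)}$, one regroups $D_{\sigma,\sigma'}=\prod_{j=1}^p\prod_{i=1}^p (D_i^{(j)})^{a_i^{(j)}}(\ol{D_i^{(j)}})^{b_i^{(j)}}$ with $a_i^{(j)}=\#\{k:\sigma_k(j)=i\}$ and $b_i^{(j)}=\#\{k:\sigma'_k(j)=i\}$; applying this to $\Phi_0=\prod_{i,j}e^{-\pi|z_i^{(j)}|^2}$ and using (\ref{Fab}) gives
$$\varphi_{\KM}=\pm\sum_{\sigma,\sigma'}\prod_{j=1}^p\left(\prod_{i=1}^p F_{b_i^{(j)},a_i^{(j)}}(z_i^{(j)})\prod_{i=1}^m e^{-\pi|z_i^{(j)}|^2}\right),$$
where, since $F_{0,0}=1$, only the indices $i$ with $a_i^{(j)}+b_i^{(j)}>0$ contribute a non-constant factor.

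Next I would parametrize the Ikeda locus. Writing the $j$-th argument of $\varphi_{\KM}$ as $y^{(j)}+x^{(j)}$ with $y^{(j)}=\sum_{i=1}^{r_0}t_i^{(j)}e_i\in V_e''(\RR)$, $x^{(j)}\in V_0'(\RR)$, and recalling $e_i=(u_i+u_{m+1-i})/\sqrt2$, this argument has standard coordinates $z_i^{(j)}=z_{m+1-i}^{(j)}=t_i^{(j)}/\sqrt2$ for $1\le i\le r_0$, the remaining coordinates ($r_0<i\le m-r_0$) being the fixed entries of $x^{(j)}$. Thus on the locus $z_i^{(j)}$ and $z_{m+1-i}^{(j)}$ coincide and the two Gaussian factors collapse, $e^{-\pi|z_i^{(j)}|^2}e^{-\pi|z_{m+1-i}^{(j)}|^2}=e^{-\pi|t_i^{(j)}|^2}$; for each fixed $(\sigma,\sigma')$ the integrand then factors over the pairs $(i,j)$ with $1\le i\le r_0$. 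Since the Haar measure on $(V_e''(\RR))^p$ equals $\prod_{i,j}dt_i^{(j)}$ up to a positive constant, the Ikeda integral becomes, up to a nonzero constant,
$$\Ik_\infty(\varphi_{\KM})(x)\;=\;c\sum_{\sigma,\sigma'}\ \prod_{\substack{1\le i\le r_0\\ 1\le j\le p}}\left(\int_{\CC}F_{b_i^{(j)},a_i^{(j)}}\!\left(\tfrac{z}{\sqrt2}\right)e^{-\pi|z|^2}\,dz\right)\cdot(\text{a function of }x).$$

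Finally, the heart of the matter is the identity $\int_{\CC}F_{a,b}(z/\sqrt2)\,e^{-\pi|z|^2}\,dz=0$ for every $(a,b)\ne(0,0)$: for $a\ne b$ the substitution $z=\sqrt2\,w$ turns this into $2I(F_{a,b})$, which vanishes by Lemma \ref{lemFab}; for $a=b=k\ge1$ it is exactly Lemma \ref{fk=0}, since $F_{k,k}=f_k$ (the two factors of $E$ commuting). Now in every summand $D_{\sigma,\sigma'}\Phi_0$, taking $j_0=\sigma_1^{-1}(1)$ gives $a_1^{(j_0)}\ge1$, and since $r_0\ge1$ the coordinate $z_1^{(j_0)}=t_1^{(j_0)}/\sqrt2$ is one of the integrated variables; the corresponding factor $\int_\CC F_{b_1^{(j_0)},a_1^{(j_0)}}(z/\sqrt2)e^{-\pi|z|^2}\,dz$ vanishes, so the summand is zero. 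As this holds for every $(\sigma,\sigma')$, $\Ik_\infty(\varphi_{\KM})=0$, whence $\varphi_{\KM}\in\ker\Ik$. I expect the only real difficulty to be organizational: carrying out the change to the basis adapted to $V_0=V_e''\oplus V_0'\oplus V_f''$ cleanly, verifying that the Ikeda integral genuinely decouples into the one-variable integrals above (Jacobian included), and matching the permutation indices of Lemma \ref{phiKMlem} with the coordinates on $V_e''$; once that bookkeeping is in place the vanishing is immediate from the two prepared lemmas.
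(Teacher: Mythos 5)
Your proposal is correct and follows essentially the same route as the paper: reduce to the archimedean place by factorizability, expand $\varphi_{\KM}$ via Lemma \ref{phiKMlem}, restrict to the locus where the coordinates along $V_e''$ identify $z_i^{(j)}$ with $z_{m+1-i}^{(j)}$, decouple the integral into one-variable Gaussian integrals over the isotropic lines, and kill each summand with Lemma \ref{lemFab} (case $a\neq b$) and Lemma \ref{fk=0} (case $a=b\geq 1$). If anything, your bookkeeping is slightly sharper than the paper's, since by choosing $j_0=\sigma_1^{-1}(1)$ you exhibit a factor with $(a,b)\neq(0,0)$ in every summand, rather than asserting vanishing for every $j$.
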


\begin{proof} 
	First we note that $\varphi_{KM}$ is $K_0(\RR)$-invariant, i.e. 
	$$(k_\infty x)=\varphi_{KM}(x),\ \ \ k_\infty 
	\in K_0(\RR).$$ This follows from the description of the action of 
	$K_0(\RR)\subset H_0(\RR)$ 
	on $\scrS(V_0(\RR))$ given in the previous section. The essential facts are 
	that the Gaussian $\varphi_0$ is $K_0(\RR)$-invariant, and the Howe operators 
	$D_\alpha$ commute with the action of $K_0(\RR)$.
	
	Since $\varphi$ is factorizable, the adelic integral in $\Ik(\varphi)$ vanishes 
	if any of the corresponding local integral factors do. Then it suffices to show 
	$$ \int_{V_e''(\RR)^{p}} \varphi_{\KM}\left(\begin{array}{c}y \\x \\ 0 
	\end{array}\right)dy=0.$$
	
	By Lemma (\ref{phiKMlem}), $\varphi_\KM$ is up to sign a sum of terms of the 
	form
	$$ \prod_{j=1}^p \left( \prod_{k=1}^q D_{\sigma_k(j)}^{(j)} 
	D_{\sigma_k'(j)}^{(j)}\varphi_{0}^{(j)}\right).$$
	
	We claim that for such terms, for each $j$,
	$$ \int_{V_e''(\RR)} \left( \prod_{k=1}^q D_{\sigma_k(j)}
	D_{\sigma_k'(j)}\varphi_{0}\right)\left(\begin{array}{c} y \\ 
	x \\ 0 \end{array}\right)= 0$$
	from which the theorem follows. Writing $a_k = \sigma_{k}(j)$, $a'_k = 
	\sigma'_{k}(j)$, the integrand as a function of standard coordinates 
	$(z_1,\cdots, z_m)$ on $V_0(\RR)$ is
	$$\prod_{k=1}^q \left(\ol{z_{a_k}} - \frac{1}{\pi}\frac{\partial}{\partial 
		z_{a_k}}\right)\left(z_{a'_k} - 
	\frac{1}{\pi}\frac{\partial}{\partial \ol{z_{a'_k}}}\right) \varphi_0.$$
	The domain $V_e''(\RR)$ is a direct sum of $q$ isotropic 
	lines spanned by $e_k$, for $k=1,\cdots, q$. Fix some $k$, and let $a$, 
	resp. $b$ denote the multiplicity of $k$ in $(a_1,\cdots, a_q)$, resp. $(a_1
	,\cdots, a_q')$. We show the factor of the integral corresponding to the 
	isotropic line $\Span\{e_k\}$ vanishes by distinguishing two cases:
	
	Case 1: $a\neq b$. This integral was considered in Lemma \ref{lemFab}. It vanishes by argument considerations.
	
	Case 2: $a=b$. This integral was explicitly computed and shown to vanish in Lemma \ref{fk=0}.
\end{proof}

\bibliographystyle{alpha}
{\small \bibliography{refdb}}

\end{document}